%%% Local Variables: 
%%% mode: latex
%%% TeX-master: t
%%% End: 

\documentclass[leqno]{article}
\usepackage{a4wide,amsmath,amssymb,amsfonts,mathrsfs,exscale,graphics,amsthm,colonequals,comment,ifthen,url,breakurl,enumerate,etoolbox,bbold,todonotes}
\usepackage{hyperref}
\usepackage[curve,matrix,arrow,cmtip]{xy}
\newdir^{ (}{{}*!/-3pt/\dir^{(}}   % Usage: \ar@{^{ (}->}[r]^{j}
\newdir_{ (}{{}*!/-3pt/\dir_{(}}   % Usage: \ar@{_{ (}->}[r]^{j}    
\NoComputerModernTips

\newtoggle{thesis}
\togglefalse{thesis}

\numberwithin{equation}{section}

%  proclamations
\theoremstyle{plain}
\newtheorem{theorem}[equation]{Theorem}%[section]

\newtheorem{lemma}[equation]{Lemma}

\newtheorem{proposition}[equation]{Proposition}

\theoremstyle{definition}
\newtheorem{definition}[equation]{Definition}

\newtheorem{construction}[equation]{Construction}
\newtheorem{remark}[equation]{Remark}

%\def\LocFree{\mathop{\tt LF}\nolimits}
%\def\GradLocFree{\mathop{\tt GrLF}\nolimits}
%\def\FilLocFree{\mathop{\tt FilLF}\nolimits}
%\def\DescFilLocFree{\mathop{\tt FilLF^\bullet}\nolimits}
%\def\AscFilLocFree{\mathop{\tt FilLF_\bullet}\nolimits}
%\newcommand{\descfil}{\operatorname{fil}^\bullet}
%\newcommand{\ascfil}{\operatorname{fil}_\bullet}
%\newcommand{\fil}{\descfil}
% \DeclareMathOperator{\fil}{fil}

% %\def\FilAlg{\mathop{\tt FilAlg}\nolimits}
% \def\GradQCoh{\mathop{\tt GrQCoh}\nolimits}
% %\def\AscFilQCoh{\mathop{\tt FilQCoh_\bullet}\nolimits}
% %\def\DescFilQCoh{\mathop{\tt FilQCoh^\bullet}\nolimits}
% %\def\FilQCoh{\DescFilQCoh}
% \def\FilQCoh{\mathop{\tt FilQCoh}\nolimits}
% \def\QCoh{\mathop{\tt QCoh}\nolimits}
% \def\GFF{\mathop{\tt GFF}\nolimits}

\DeclareMathOperator{\forg}{forg}

 \DeclareMathOperator{\End}{End}
 \DeclareMathOperator{\Spec}{Spec}

 \DeclareMathOperator{\Hom}{Hom}
 
% \DeclareMathOperator{\Mod}{Mod}
% \DeclareMathOperator{\coker}{coker}
% \DeclareMathOperator{\Mon}{Mon}
% \DeclareMathOperator{\Alg}{Alg}
% \DeclareMathOperator{\pr}{pr}
% \DeclareMathOperator{\Tors}{Tors}
% \DeclareMathOperator{\Cl}{Cl}
 
% %\DeclareMathOperator{\N}{N}
% \DeclareMathOperator{\chr}{char}
% \DeclareMathOperator{\Desc}{Desc}
% \DeclareMathOperator{\Norm}{Norm}
% \DeclareMathOperator{\rk}{rk}
% \DeclareMathOperator{\Lie}{Lie}
% \DeclareMathOperator{\Gal}{Gal}

\let\into\hookrightarrow

\newcommand{\defeq}{\colonequals}

\newcommand{\Gm}[1][\empty]{
  \ifthenelse{\equal{#1}{\empty}}
    {\mathbb{G}_m}
    {\mathbb{G}_{m,#1}}}

 \newcommand{\Gred}[1][\empty]{
  \ifthenelse{\equal{#1}{\empty}}
    {G^{\text{red}}}
    {G^{\text{red},#1}}}

 \newcommand{\Rep}[1][\empty]{
  \ifthenelse{\equal{#1}{\empty}}
    {\mathop{\text{\tt Rep}}\nolimits}
    {\mathop{\text{$#1$-{\tt Rep}}}\nolimits}}

\DeclareMathOperator{\gr}{gr}
\DeclareMathOperator{\fil}{fil}

%\let\le\leqslant
%\let\ge\geqslant

% Pfeile

%        laengerer Pfeil, ueberschrieben mit #1

\newcommand\toover[1]{\mathrel{\smash{\overset{#1}{\to}}}}
\newcommand\varto[1]{\mathrel{\hbox to #1pt{\rightarrowfill}}}
%        Pfeil nach rechts mit variabler Laenge
%        Syntax: "\varto{Laenge}"
%        \varto{10} = \to, \varto{15} = \lto

%        Ueberschriebener Pfeil nach rechts mit variabler Laenge

%        laengerer Linkspfeil, ueberschrieben mit #1

\renewcommand{\implies}{\Rightarrow}

\let\longto\longrightarrow

\def\isoto{\stackrel{\sim}{\longto}}
 
%  Blackboard bold, Fraktur, Calligraphic, script and underlined letters:

\newcommand{\BZ}{{\mathbb{Z}}}

\newcommand{\CA}{{\mathcal A}}
\newcommand{\CB}{{\mathcal B}}
\newcommand{\CC}{{\mathcal C}}
\newcommand{\CD}{{\mathcal D}}

\newcommand{\CG}{{\mathcal G}}

\newcommand{\CL}{{\mathcal L}}
\newcommand{\CM}{{\mathcal M}}
\newcommand{\CN}{{\mathcal N}}
\newcommand{\CO}{{\mathcal O}}

\newcommand{\CS}{{\mathcal S}}
\newcommand{\CT}{{\mathcal T}}

\def\UHom{\mathop{\underline{\rm Hom}}\nolimits}
\def\UEnd{\mathop{\underline{\rm End}}\nolimits}

\def\UAut{\mathop{\underline{\rm Aut}}\nolimits}

\def\ULie{\mathop{\underline{\rm Lie}}\nolimits}

\def\USpl{\mathop{\underline{\rm Spl}}\nolimits}

\let\phi\varphi

\DeclareMathOperator{\id}{id}

\DeclareMathOperator{\Lie}{Lie}

\def\GFF{\mathop{\tt GFF}\nolimits}
\def\Mod{\mathop{\tt Mod}\nolimits}
\def\Gr{\mathop{\tt Gr}\nolimits}
\def\Fil{\mathop{\tt Fil}\nolimits}

\DeclareMathOperator{\Lex}{Lex}
\DeclareMathOperator{\Ab}{Ab}
\DeclareMathOperator{\Set}{Set}

\DeclareMathOperator{\colim}{colim}

\newcommand{\FunEx}[1][\empty]{
\ifthenelse{\equal{#1}{\empty}}
{\operatorname{Fun}_{\operatorname{ex}}}
{\operatorname{Fun}_{\operatorname{ex},#1}}}

\newcommand{\FunLex}[1][\empty]{
\ifthenelse{\equal{#1}{\empty}}
{\operatorname{Fun}_{\operatorname{lex}}}
{\operatorname{Fun}_{\operatorname{lex},#1}}}

\newcommand{\FunRex}[1][\empty]{
\ifthenelse{\equal{#1}{\empty}}
{\operatorname{Fun}_{\operatorname{rex}}}
{\operatorname{Fun}_{\operatorname{rex},#1}}}

\newcommand{\FunColim}[1][\empty]{
\ifthenelse{\equal{#1}{\empty}}
{\operatorname{Fun}_{\operatorname{cc}}}
{\operatorname{Fun}_{\operatorname{cc},#1}}}

\newcommand{\FunLA}[1][\empty]{
\ifthenelse{\equal{#1}{\empty}}
{\operatorname{Fun}_{\operatorname{la}}}
{\operatorname{Fun}_{\operatorname{la},#1}}}

\newcommand{\op}{\text{op}}

\def\Sch{{\bf Sch}}
\begin{document}
\title{Filtered fiber functors over a general base}

%\author{Paul Ziegler}
\author{Paul Ziegler\footnote{TU Darmstadt, Darmstadt, Germany,
 {\tt ziegler@mathematik.tu-darmstadt.de}}
}

% \date{}

\maketitle
\begin{abstract}
  We prove that every filtered fiber functor on the category of dualizable representations of a smooth affine group scheme with enough dualizable representations comes from a graded fiber functor.
\end{abstract}

\section{Introduction}

This article is concerned with filtered fiber functors over general bases, extending the previous article \cite{FFF} of the author. Let $R$ be a ring, $G$ a smooth affine group scheme over $R$ and $(\Gamma,\leq)$ a totally ordered group. For any scheme $S$, we denote by $\Gr(S)$ (resp. $\Fil(S)$) the category of $\Gamma$-graded (resp. $\Gamma$-filtered) quasicoherent $\CO_S$-modules. If $\Rep^\circ G$ denotes the tensor category of dualizable representations of $G$ over $R$, then a filtered fiber functor over an $R$-scheme $S$ is an exact $R$-linear tensor functor
\begin{equation*}
  \Rep^\circ G \to \Fil(S).
\end{equation*}

One way of obtaining such a functor is to take an $R$-linear exact tensor functor $\Rep^\circ G \to \Gr(S)$ and compose it with the natural functor $\Gr(S) \to \Fil(S)$ turning a graded $\CO_S$-modules into a filtered one. Such filtered fiber functors are called splittable. Then our main theorem is:
\begin{theorem}[c.f. Theorem \ref{MainThm}]
  If $G$ has enough dualizable representations, then every filtered fiber functor on $\Rep^\circ G$ over an affine $R$-scheme $S$ is splittable.
\end{theorem}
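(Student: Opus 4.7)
The plan is to reduce splittability of a filtered fiber functor to the triviality of a torsor under a smooth affine unipotent group scheme over the affine base $S$, and then invoke the vanishing of higher quasicoherent cohomology on affines.

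First, I would form the associated graded functor $\gr(\omega) \colon \Rep^\circ G \to \Gr(S)$ sending a dualizable representation $V$ to $\bigoplus_{\gamma \in \Gamma} \fil^\gamma \omega(V) / \fil^{>\gamma} \omega(V)$. By arguments analogous to those of \cite{FFF}, this is a graded fiber functor, and its composition with the inclusion $\Gr(S) \to \Fil(S)$ produces a splittable filtered fiber functor $\omega'$ with the same associated graded as $\omega$. Splittability of $\omega$ is then equivalent to the existence of a filtered tensor isomorphism $\omega \simeq \omega'$ inducing the identity on the associated gradeds.

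Second, I would introduce the functor $U$ on affine $S$-schemes that sends $T \to S$ to the set of filtration-preserving tensor automorphisms of $\omega_T$ acting trivially on $\gr(\omega_T)$, and the functor $I$ of filtered tensor isomorphisms $\omega_T \to \omega'_T$ inducing the identity on gradeds. The goal is to show that $U$ is represented by a smooth affine unipotent $S$-group scheme, that $I$ is a right $U$-torsor, and that $U$ admits a central filtration whose graded pieces are vector groups associated to the quasicoherent sheaves $\UHom(\gr^\gamma \omega(V), \gr^\delta \omega(V))$ for $\delta < \gamma$. This is the Tannakian counterpart of writing the inner form of a parabolic reduction as an extension of its Levi quotient by its unipotent radical; the Tannakian reconstruction available thanks to the hypothesis that $G$ has enough dualizable representations, combined with the smoothness of $G$, should supply both the representability and the vector-group filtration.

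Third, once $U$ is identified as a successive extension of vector groups $V(\CF)$ attached to quasicoherent $\CO_S$-modules $\CF$, the triviality of the $U$-torsor $I$ over affine $S$ will follow by dévissage: each layer contributes an obstruction in $H^1(S, \CF) = 0$, and so $I$ itself admits a section. This section is precisely a splitting of $\omega$, completing the proof.

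The main obstacle is the second step: identifying $U$ as a smooth unipotent group with a vector-group central filtration, and verifying that $I$ is a $U$-torsor. This is where the smoothness hypothesis on $G$ genuinely enters, since it is what ensures local freeness of the infinitesimal data being filtered. A secondary complication is that $\Gamma$ need not be finitely generated, so the central filtration on $U$ may be infinite; this will be handled either by a filtered-colimit argument over the finitely many "jumps" of $\omega$ applied to a single representation $V$ (which is a finite set by the dualizability of $V$), and then inductively over a generating family of $\Rep^\circ G$, or by directly working with a cofinal system of finite sub-posets of $\Gamma$ that appear as jumps.
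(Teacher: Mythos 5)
Your overall architecture matches the final step of the paper's proof: the set of splittings is acted on simply transitively by the group $U$ of filtration-preserving tensor automorphisms that act trivially on the associated graded, $U$ is an iterated extension of vector groups $\mathbb{V}(\CF)$ for quasicoherent $\CF$, and a torsor under such a group over an affine base is trivial since $H^1(S,\CF)=0$. But there is a genuine gap at exactly the point you flag as ``verifying that $I$ is a $U$-torsor'': the simple transitivity of the $U$-action is the comparatively formal part (it is \cite[IV.2.2.1]{SaavedraRivano} and only makes $I$ a \emph{pseudo}torsor), whereas what actually needs proving is that $I$ is locally non-empty, i.e.\ that $\phi$ is splittable \'etale- or fppf-locally on $S$. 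Your d\'evissage through $H^1(S,\CF)=0$ kills the obstruction to gluing local sections, but it says nothing if there are no local sections to begin with; a pseudotorsor can be empty, and nothing in your outline produces even one splitting over any cover of $S$.

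This local existence is where the bulk of the paper's work goes, and it does not follow from the Tannakian reconstruction you invoke. The paper (i) develops base-change machinery for filtered fiber functors along $R \to k(s)$ for a point $s\in S$ (this is what the $\Lex$/Gabriel--Quillen material and the hypothesis of enough dualizable representations are for), (ii) invokes the theorem over a field --- itself a non-trivial result from the author's earlier paper, generalizing Deligne --- to split the fiber $\phi_s$, and (iii) lifts the resulting splitting cocharacter of $P(\phi)_s$ to an \'etale neighborhood using smoothness of $\UAut^\otimes(\omega)$ and the cocharacter-lifting theorem of SGA~3. Note also that the identification of the graded pieces of $U$ with vector groups is itself proved in the paper by first reducing, via this \'etale-local splittability, to the split case and then to geometric fibers over a field; so even that part of your second step silently presupposes the missing ingredient. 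To repair the proposal you would need to add the reduction to the field case and the spreading-out argument, or some substitute for them.
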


A different proof of a generalization of this result was recently given by Wedhorn in \cite{wedhorn2023extension}, c.f. in particular \cite[Subsection 3.3]{wedhorn2023extension}. For many reductive group schemes, this result was previously proved by Cornut in \cite{CornutFiltrations}. In case the base ring $R$ is a field, this was proved in for many group schemes in \cite[Subsection IV.2.4]{SaavedraRivano} by Deligne and for all group schemes by the author in \cite{FFF}.

As with our previous work \cite{FFF}, this result is motivated by applications for example in the theory of Shimura varieties.

 Our proof proceeds by reduction to the case where $R$ is a field. To do this, we need to construct the base change of a filtered fiber functor with respect to a change $R \to R'$ of the base ring over which we consider the category $\Rep^\circ G$. For this, we first collect some facts on exact categories, the Gabriel-Quillen embedding of such categories into an abelian category, and the base change of cocomplete linear abelian categories. This is then applied to the exact category $\Rep^\circ G$.
\paragraph{Acknowledgment}
The author thanks Johannes Ansch\"utz for conversations regarding the content of this article and an anonymous referee for helpful comments. The author acknowledges support by the ERC in form of Eva Viehmann's Consolidator Grant 770936: Newton Strata, by the Deutsche Forschungsgemeinschaft (DFG) through the Collaborative Research Centre TRR 326 "Geometry and Arithmetic of Uniformized Structures", project number 444845124 and by the LOEWE professorship in Algebra, project number LOEWE/4b//519/05/01.002(0004)/87.
\section{Preliminaries}

\subsection{Exact Categories}

By an exact category we mean such a category in the sense of Quillen, c.f. \cite[Def. 2.1]{Buhler}.

\begin{lemma}[{\cite[Lemma 10.20]{Buhler}}] \label{ExtClosed}
  Let $\CA$ be an abelian category and $\CB \subset \CA$ a full additive subcategory whose class of objects is closed under extensions in $\CA$. The class of sequences in $\CB$ which are short exact in $\CA$ makes $\CB$ into an exact category.
\end{lemma}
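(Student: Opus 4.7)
The plan is to verify Quillen's axioms for an exact category (in the form given in \cite[Def.~2.1]{Buhler}) for the class $\mathcal{E}$ of sequences
\[
  0 \to A' \to A \to A'' \to 0
\]
in $\CB$ that are short exact in $\CA$, where we call the left-hand map an \emph{inflation} and the right-hand map a \emph{deflation}. The two substantive inputs will be (i) that $\CB$ is closed under isomorphisms and finite direct sums in $\CA$ (being a full additive subcategory) and (ii) that the class of objects of $\CB$ is closed under extensions in $\CA$.

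First I would dispose of the trivial axioms: closure of $\mathcal{E}$ under isomorphisms of sequences is immediate from the corresponding fact in $\CA$, and every split short exact sequence with terms in $\CB$ lies in $\mathcal{E}$ because its middle term is a biproduct of objects of $\CB$, hence in $\CB$, and the sequence is exact in $\CA$. The identity of any object of $\CB$ is both an inflation and a deflation via the split sequence $0 \to 0 \to A \to A \to 0$.

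Next I would tackle the closure of inflations and deflations under composition. Suppose $i \colon A \mono B$ and $j \colon B \mono C$ are inflations in $\CB$, with cokernels $B/A$ and $C/B$ lying in $\CB$. Then $ji \colon A \to C$ is a monomorphism in $\CA$, and in $\CA$ one has the short exact sequence
\[
  0 \to B/A \to C/A \to C/B \to 0.
\]
Since $B/A$ and $C/B$ are in $\CB$ and $\CB$ is extension-closed, $C/A$ lies in $\CB$, so the sequence $0 \to A \to C \to C/A \to 0$ witnesses that $ji$ is an inflation. The argument for deflations is dual.

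The remaining axioms are the existence and stability of pushouts of inflations along arbitrary $\CB$-morphisms and pullbacks of deflations along arbitrary $\CB$-morphisms; this is where extension closure does the real work. Given an inflation $i \colon A \mono B$ with cokernel $Q \in \CB$ and a morphism $f \colon A \to A'$ in $\CB$, form the pushout $B' = B \sqcup_A A'$ in $\CA$. By the standard five-lemma-type argument in an abelian category, the induced map $A' \to B'$ is a monomorphism with cokernel canonically isomorphic to $Q$, producing a short exact sequence $0 \to A' \to B' \to Q \to 0$ in $\CA$. Since $A', Q \in \CB$ and $\CB$ is extension-closed, $B' \in \CB$, so this pushout also exists in $\CB$ and the resulting map is an inflation. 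The dual argument handles pullbacks of deflations.

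The main obstacle, such as it is, is purely bookkeeping: one must check that the pushouts and pullbacks formed in $\CA$ really do represent the corresponding universal problem when restricted to $\CB$, which follows automatically once the pushout/pullback object is shown to lie in $\CB$. Thus the content of the lemma is exactly the closure under extensions hypothesis, invoked once for composition of inflations/deflations and once for pushouts/pullbacks.
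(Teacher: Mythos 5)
Your proof is correct and follows essentially the same route as the proof in the cited reference \cite[Lemma 10.20]{Buhler} (the paper itself gives no argument beyond that citation): verify the Quillen axioms directly, using the Noether isomorphism for composition of inflations/deflations and extension-closedness to see that pushouts of inflations and pullbacks of deflations formed in $\CA$ land in $\CB$. The only point worth adding explicitly is that the sequences in question are kernel--cokernel pairs \emph{in} $\CB$, which follows from fullness by the same ``fewer test objects'' remark you make for pushouts.
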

We fix a small exact category $\CA$.
\begin{definition}
  Let $\CC$ be an additive category admitting kernels. An additive functor $F\colon \CA \to \CC$ is \emph{left exact} if for every exact sequence
  \begin{equation*}
    0 \to X \to Y \to Z \to 0
  \end{equation*}
in $\CA$ the morphism $F(X) \to F(Y)$ is a kernel of $F(Y) \to F(Z)$ in $\CC$.

 We denote by $\FunLex(\CA,\CC)$ the category whose objects are given by such functors and whose morphisms are given by arbitrary natural transformations of such functors.

Dually for an additive category $\CC$ admitting cokernels we define right exact functors $\CA \to \CC$ as left exact functors $\CA^{\text{op}} \to \CC^{\text{op}}$ and let $\FunRex(\CA,\CC)$ be the category of such functors.
\end{definition}

\begin{definition}
  We denote by $\Lex(\CA)$ the category $\FunLex(\CA^\op,\Ab)$ of left exact functors from $\CA^\op$ to the category $\Ab$ of abelian groups. 
\end{definition}

\begin{theorem}[{c.f. \cite[Appendix A]{Buhler}}] \label{LexProperties}
  \begin{enumerate}[(i)]
  \item  The category $\Lex(\CA)$ is a Grothendieck category. In particular it is abelian, complete and cocomplete. 
  \item The Yoneda functor $\CA \to \Lex(\CA),\; X \mapsto \Hom(\underline{\;\;},X)$ is exact and fully faithful and reflects exactness.

  \end{enumerate}
\end{theorem}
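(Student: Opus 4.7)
The plan is to identify $\Lex(\CA)$ with the category of abelian sheaves on $\CA$ equipped with a natural Grothendieck topology, and then deduce both assertions from standard properties of Grothendieck topoi. Endow $\CA$ with the single-cover topology in which a sieve on $X$ is covering if and only if it contains some admissible epimorphism $Y \onto X$. The stability properties of admissible epimorphisms in an exact category — closure under composition and under pullback along arbitrary morphisms of $\CA$ — imply that this is a legitimate pretopology. For an admissible short exact sequence $0 \to K \to Y \to X \to 0$, the pullback $Y \times_X Y$ exists and is isomorphic to $Y \oplus K$, so unwinding the sheaf axiom for the single cover $Y \onto X$ shows that it is equivalent to left exactness of $F$ on the given sequence. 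Hence $\Lex(\CA)$ coincides with the category $\operatorname{Sh}(\CA,\Ab)$ of abelian sheaves on this small site.

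For (i), the category of abelian sheaves on a small site is automatically a Grothendieck category: it is complete (with limits computed as in presheaves), cocomplete (with colimits obtained by sheafifying the presheaf colimit), satisfies AB5 because sheafification is exact and commutes with filtered colimits, and admits a set of generators given by the sheafifications of the representable presheaves. This yields all of the structure claimed for $\Lex(\CA)$.

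For (ii), the representable functor $h_X = \Hom(\blank,X)$ carries any admissible short exact sequence in $\CA$ to a left exact sequence of abelian groups, so the Yoneda functor factors through $\Lex(\CA)$. Fully faithfulness is the classical Yoneda lemma, unaffected by restricting the target from $\PSh(\CA,\Ab)$ to $\Lex(\CA)$. For exactness on a sequence $0 \to X \to Y \to Z \to 0$ in $\CA$: kernels in $\Lex(\CA)$ are computed presheaf-wise, hence $h_X$ is the kernel of $h_Y \to h_Z$; the map $h_Y \to h_Z$ is an epimorphism in $\Lex(\CA)$ because, by construction, $Y \onto Z$ is a cover and therefore induces a sheaf-epimorphism. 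Reflection of exactness follows formally from fully faithfulness together with the characterization of kernel-cokernel pairs via $\operatorname{Hom}$-sets into arbitrary objects of $\CA$.

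The main technical obstacle lies in the sheaf-theoretic setup itself: one must verify that the single admissible epimorphisms generate a Grothendieck pretopology (composition, identity, pullback) and that the sheaf condition unwinds precisely to left exactness. Both rely on iterated use of Quillen's axioms, most importantly on the existence and explicit form of the pullbacks $Y \times_X Y$ for an admissible epi $Y \onto X$. Once this identification is in place, both (i) and (ii) reduce to well-known facts about abelian sheaves on small sites.
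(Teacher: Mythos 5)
Your overall route is the right one, and in fact it is essentially the proof the paper is implicitly invoking: Theorem \ref{LexProperties} is cited to B\"uhler's Appendix A, where it is established by exactly this argument (the topology whose covers are the admissible epimorphisms, identification of left exact functors with sheaves, and the general theory of sheaves on a small site). However, two steps as you have written them are not correct and need repair.

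First, the identification $\Lex(\CA)\cong\operatorname{Sh}(\CA,\Ab)$ is false if $\operatorname{Sh}(\CA,\Ab)$ denotes all abelian-group-valued sheaves: a nonzero constant presheaf satisfies the equalizer condition $F(X)\to F(Y)\rightrightarrows F(Y\times_X Y)$ for every single cover (both parallel maps are the identity), yet it is not left exact. Your unwinding of the sheaf axiom tacitly uses $F(Y\times_X Y)\cong F(Y)\oplus F(K)$, which holds only when $F$ is an \emph{additive} functor. The correct statement is that $\Lex(\CA)$ is the category of additive sheaves, i.e.\ the sheaves inside the Grothendieck category of additive presheaves $\CA^{\op}\to\Ab$; since sheafification preserves additivity and is exact, part (i) still follows, but the argument must be run in the additive setting. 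Second, ``reflection of exactness follows formally from fully faithfulness'' is not right, and this is in fact the delicate point of the Gabriel--Quillen embedding: if a kernel--cokernel pair in $\CA$ becomes short exact in $\Lex(\CA)$, full faithfulness does not tell you that $Y\to Z$ is an \emph{admissible} epimorphism, because the exact structure need not consist of all kernel--cokernel pairs. What the sheaf epimorphism $h_Y\to h_Z$ gives you is a local lift of $\id_Z$, i.e.\ an admissible epimorphism $W\onto Z$ factoring through $Y\to Z$; to conclude that $Y\to Z$ is itself an admissible epimorphism you need Quillen's ``obscure axiom'' (a morphism admitting a kernel that becomes an admissible epimorphism after precomposition is an admissible epimorphism). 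That step must be made explicit rather than dismissed as formal.
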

We will always consider $\CA$ as a full subcategory of $\Lex(\CA)$ via the Yoneda embedding.

\begin{definition}
  \begin{enumerate}[(i)]
  \item Let $\CA$ be an abelian category. A set $\CS$ of objects of $\CA$ \emph{generates} $\CA$ if for every pair of parallel morphisms $f,g \colon Y \to Z$ in $\CA$ the following condition is satisfied: If for every morphism $h\colon X \to Y$ with $X \in \CS$ the compositions $f\circ h$ and $g\circ h$ are equal, then $f$ and $g$ are equal.
  \item Let $\CC$ be a category which admits all filtered colimits. An object $X \in \CC$ is \emph{finitely presented} if the functor $\Hom(X,\underline{\;\;})\colon \CC \to \Set$ preserves filtered colimits.
  \end{enumerate}
\end{definition}

\begin{proposition}[{\cite[2.17]{MR3303245}}] \label{FpProps}
 The objects of $\CA$ are finitely presented in $\Lex(\CA)$ and generate $\Lex(\CA)$.
\end{proposition}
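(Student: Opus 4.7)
The plan is to leverage the Yoneda embedding $\CA \hookrightarrow \Lex(\CA) \hookrightarrow \Fun(\CA^{\op},\Ab)$ and reduce both claims to the well-known analogues for the presheaf category, the key subtlety being that filtered colimits in $\Lex(\CA)$ agree with those in $\Fun(\CA^{\op},\Ab)$.

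For finite presentation, I would fix $X \in \CA$ and a filtered diagram $(F_i)$ in $\Lex(\CA)$. By Yoneda (applied in the enriched presheaf category), $\Hom_{\Fun(\CA^{\op},\Ab)}(X,F) = F(X)$ for every presheaf $F$, and since the inclusion $\Lex(\CA) \hookrightarrow \Fun(\CA^{\op},\Ab)$ is full, the same equality holds in $\Lex(\CA)$. Thus it suffices to show that the evaluation functor at $X$ commutes with the filtered colimit computed in $\Lex(\CA)$. The critical point is that a pointwise filtered colimit of left exact functors in $\Fun(\CA^{\op},\Ab)$ is again left exact: for any short exact sequence $0 \to X' \to Y' \to Z' \to 0$ in $\CA$, the sequences $0 \to F_i(Z') \to F_i(Y') \to F_i(X')$ are exact in $\Ab$, and filtered colimits are exact in $\Ab$, so exactness passes to the colimit. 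Consequently $\colim F_i$ already lies in $\Lex(\CA)$, so it is the colimit there, and evaluation at $X$ — which always preserves colimits in the presheaf category since colimits there are computed pointwise — gives the desired equality $(\colim F_i)(X) = \colim F_i(X)$.

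For generation, I would let $f, g \colon F \to G$ be a pair of morphisms in $\Lex(\CA)$ such that $f \circ h = g \circ h$ for every $h \colon X \to F$ with $X \in \CA$. By the Yoneda lemma, morphisms $X \to F$ are in natural bijection with elements of $F(X)$, and under this bijection the composition with $f$ corresponds to applying the component $f_X \colon F(X) \to G(X)$. The hypothesis therefore forces $f_X(x) = g_X(x)$ for every $X \in \CA$ and every $x \in F(X)$, i.e.\ $f_X = g_X$ for all $X$, which means $f = g$ as natural transformations.

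The only nontrivial step is the preservation of filtered colimits by the inclusion $\Lex(\CA) \hookrightarrow \Fun(\CA^{\op},\Ab)$; once this is established, both parts of the proposition are essentially the standard statements that representables are finitely presented and generating in a presheaf category, transported through the fully faithful Yoneda embedding.
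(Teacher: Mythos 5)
Your argument is correct. The paper itself gives no proof of this proposition --- it simply cites \cite[2.17]{MR3303245} --- so your write-up supplies the standard argument that underlies the cited result rather than following a proof in the text. The two claims do reduce, as you say, to the Yoneda lemma for additive presheaves: generation is immediate from $\Hom(X,F)\cong F(X)$, and finite presentation hinges precisely on the one point you isolate, namely that the inclusion of $\Lex(\CA)$ into the category of additive functors $\CA^\op\to\Ab$ preserves filtered colimits because filtered colimits in $\Ab$ are exact, so a pointwise filtered colimit of left exact functors is again left exact (and additive, since finite biproducts commute with filtered colimits). Two small points worth making explicit if you were to write this out in full: the ambient category should be the category of \emph{additive} functors $\CA^\op\to\Ab$ rather than all functors, so that the enriched Yoneda lemma $\Hom(h_X,F)\cong F(X)$ applies as stated; and one should check that the composite identification $\colim\Hom(X,F_i)\to\Hom(X,\colim F_i)$ you produce is the canonical comparison map, which follows from naturality of the Yoneda isomorphism. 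Neither affects the validity of the argument.
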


\begin{definition}
  For $F \in \Lex(\CA)$, we define the category $\CA \downarrow F$ as follows:
  \begin{enumerate}[(i)]
  \item Objects consist of an object $X \in \CA$ together with a morphism $X \to F$ in $\Lex(\CA)$.
  \item Morphisms are given by commutative diagrams
    \begin{equation*}
      \xymatrix{
        X \ar[rd] \ar[rr] & & X' \ar[ld] \\
        & F &
}
    \end{equation*}
in $\Lex(\CA)$.
  \end{enumerate}
\end{definition}

The following follows by a standard argument from the fact that $\CA \to \Lex(\CA)$ is the Yoneda embedding:
\begin{lemma} \label{colims}
  \begin{enumerate}[(i)]
  \item For $F \in \Lex(\CA)$, the natural morphism
    \begin{equation*}
       \underset{X \in \CA \downarrow F}{\colim} \;X\to F,
    \end{equation*}
where the colimit is formed in $\Lex(\CA)$, is an isomorphism.
\item For objects $F,G \in \Lex(\CA)$, the natural homomorphism
  \begin{equation*}
    \underset{X\in \CA \downarrow F}{\lim} \; \underset{Y \in \CA \downarrow G}{\colim} \; \Hom(X,Y) \to \Hom(F,G)
  \end{equation*}
is an isomorphism.
  \end{enumerate}
\end{lemma}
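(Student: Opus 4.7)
The plan is to deduce both statements from the Yoneda lemma for the embedding $\CA \subset \Lex(\CA)$, bypassing the fact that the relevant indexing categories need not be filtered.

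For (i), I would verify that $\colim_{(X,\alpha) \in \CA \downarrow F} X \to F$ is an isomorphism by testing against an arbitrary $H \in \Lex(\CA)$. By the universal property of the colimit and the Yoneda identification $\Hom_{\Lex(\CA)}(X,H) = H(X)$ for $X \in \CA$, we have
\[
  \Hom_{\Lex(\CA)}\bigl(\colim_{(X,\alpha)} X,\; H\bigr) \;=\; \lim_{(X,\alpha) \in \CA \downarrow F} H(X).
\]
On the other hand, a natural transformation $\eta \colon F \to H$ is determined by its components $\eta_X \colon F(X) \to H(X)$; naturality in $X$, read through the identification of elements of $F(X)$ with morphisms $X \to F$, is precisely compatibility in the above limit. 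Hence the induced map $\Hom(F,H) \to \lim H(X)$ is a bijection for every $H$, proving (i).

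For (ii), part (i) gives $\Hom(F,G) = \lim_{X \in \CA \downarrow F} \Hom(X,G) = \lim_{X \in \CA \downarrow F} G(X)$, so it suffices to prove that for each fixed $X \in \CA$ the natural map
\[
  \colim_{(Y,\beta) \in \CA \downarrow G} \Hom(X,Y) \;\lto\; \Hom(X,G) = G(X), \qquad (Y,\beta,f) \lsends \beta \circ f,
\]
is a bijection, and then pass to the limit over $\CA \downarrow F$. Surjectivity is immediate: $g \in G(X)$ is the image of $(X,g,\id_X)$. For injectivity, if $(Y_i,\beta_i,f_i)$ for $i = 1,2$ both have image $g$, then $\beta_i \circ f_i = g$ says that $f_i$ is a morphism $(X,g) \to (Y_i,\beta_i)$ in $\CA \downarrow G$; under the induced map $\Hom(X,X) \to \Hom(X,Y_i)$ the identity $\id_X$ is sent to $f_i$, so both $f_i$ collapse to the class of $\id_X$ in the colimit.

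The main subtlety is that $\CA \downarrow G$ is generally \emph{not} filtered (an exact category need not admit the coequalisers needed for the filteredness condition on parallel arrows), so the standard argument that ``finitely presented objects commute with filtered colimits'' coming from Proposition \ref{FpProps} is not directly available for the second bijection. The workaround is to use $\id_X$ as a canonical representative and argue at the level of sets, as above; this same asymmetry is what forces the particular order ``$\lim$ then $\colim$'' in the statement of (ii).
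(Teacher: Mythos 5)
Your overall strategy---prove (i) by testing the colimit against an arbitrary $H \in \Lex(\CA)$ via Yoneda, and reduce (ii) to the pointwise statement that $\colim_{(Y,\beta)} \Hom(X,Y) \to G(X)$ is bijective for each fixed $X \in \CA$---is exactly the ``standard argument from the Yoneda embedding'' that the paper invokes without writing out. Part (i) is correct as you state it; the only thing to add is a check that the components $\eta_X \colon F(X) \to H(X)$ built from a compatible family are group homomorphisms (not just maps of sets), which follows from compatibility applied to the objects $(X \oplus X, (\alpha,\alpha'))$ of $\CA \downarrow F$ and the diagonal and inclusion morphisms, using additivity of $F$ and $H$.

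The genuine gap is in the injectivity half of (ii). You rightly observe that $\CA \downarrow G$ need not be filtered, but your workaround---``use $\id_X$ as a canonical representative and argue at the level of sets''---only treats elements of the colimit lying in the image of a single structure map $\Hom(X,Y) \to \colim$. The statement concerns a \emph{homomorphism}, so the colimit must be formed in $\Ab$, and over a non-filtered index category a general element of an $\Ab$-colimit is a finite sum $\sum_j [f_j]$ with the $f_j$ living over different objects $(Y_j,\beta_j)$; the underlying set of the $\Ab$-colimit need not agree with the $\Set$-colimit, and that discrepancy is precisely the failure of filteredness you flagged, so the set-level argument does not by itself compute the right object. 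The repair is short: $\CA \downarrow G$ admits finite coproducts, $(Y,\beta)\sqcup(Y',\beta') = (Y\oplus Y',(\beta,\beta'))$, so $\sum_j [f_j] = \bigl[\textstyle\sum_j i_j \circ f_j\bigr]$ with $\sum_j i_j\circ f_j \in \Hom(X,\oplus_j Y_j)$; hence every element of the $\Ab$-colimit is represented by a single morphism. After that your $\id_X$-argument applies verbatim, supplemented by the observation that $[(X,0,\id_X)]=0$ because the morphism $(X,0)\to(0,0)$ of $\CA\downarrow G$ sends $\id_X$ to $0$. With these two additions the proof is complete and matches the intended argument.
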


\begin{definition}
  For cocomplete categories $\CC$ and $\CD$, we denote by $\FunColim(\CC,\CD)$ the category of colimit-preserving functors $\CC \to \CD$ and by $\FunLA(\CC,\CD) \subset \FunColim(\CC,\CD)$ the full subcategory of functors which admit a right adjoint.
\end{definition}
The embedding $\CA \to \Lex(\CA)$ has the following universal property:
\begin{proposition} \label{LexUP}
  Let $\CC$ a cocomplete additive category. The functors 
  \begin{equation*}
    \FunLA(\Lex(\CA),\CC) \to \FunColim(\Lex(\CA),\CC) \to \FunRex(\CA,\CC),
  \end{equation*}
the second of which is induced by the exact functor $\CA \to \Lex(\CA)$, are equivalences of categories.

\end{proposition}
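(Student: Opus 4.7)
The plan is to construct an explicit inverse to the second functor via left Kan extension along the Yoneda embedding; the first functor will then be an equivalence automatically, because the constructed inverse produces an explicit right adjoint for every colimit-preserving functor.

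Well-definedness of the second functor is immediate: the Yoneda embedding is exact by Theorem~\ref{LexProperties}(ii), so a short exact sequence $0 \to X \to Y \to Z \to 0$ in $\CA$ exhibits $Z$ as the cokernel of $X \to Y$ in the abelian category $\Lex(\CA)$, and any colimit-preserving $H$ turns this into the corresponding cokernel diagram in $\CC$. For the inverse, given a right exact $F \colon \CA \to \CC$, I would set
\begin{equation*}
  \tilde F(G) \defeq \colim_{X \in \CA \downarrow G} F(X).
\end{equation*}
For $X_0 \in \CA$ the slice $\CA \downarrow X_0$ has $\id_{X_0}$ as terminal object by full faithfulness of Yoneda, so $\tilde F|_\CA \simeq F$.

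The main technical point is to show that $\tilde F$ preserves colimits. I would establish this by exhibiting an explicit right adjoint
\begin{equation*}
  R \colon \CC \to \Lex(\CA), \qquad R(C) \defeq \Hom_\CC(F(-),C).
\end{equation*}
Right exactness of $F$ is precisely what is needed for $R(C)$ to land in $\Lex(\CA)$, i.e.\ to be left exact as a functor $\CA^\op \to \Ab$: a short exact sequence in $\CA$ is sent by $F$ to a cokernel diagram in $\CC$, which $\Hom_\CC(-,C)$ then turns into a left exact sequence in $\Ab$. The adjunction $\tilde F \dashv R$ follows via
\begin{equation*}
  \Hom_\CC(\tilde F(G),C) = \lim_{X \in \CA \downarrow G} \Hom_\CC(F(X),C) = \lim_{X \in \CA \downarrow G} \Hom_{\Lex(\CA)}(X, R(C)) = \Hom_{\Lex(\CA)}(G,R(C)),
\end{equation*}
using the definition of $\tilde F$, Yoneda, and Lemma~\ref{colims}(i) in turn.

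Finally, for any colimit-preserving $H$, commuting $H$ past the colimit $G = \colim_{X \in \CA \downarrow G} X$ of Lemma~\ref{colims}(i) shows $\widetilde{H|_\CA} \simeq H$, so the two constructions are mutually inverse. This simultaneously shows that every colimit-preserving $H$ is naturally isomorphic to some $\tilde F$ and hence admits a right adjoint, yielding the first equivalence. The main obstacle is pinning down the correct variance: it is right exactness of $F$, not left exactness, that makes $R(C)$ a left exact functor on $\CA^\op$. Once this is in place, the rest is the standard density argument for the Yoneda embedding adapted to exact categories.
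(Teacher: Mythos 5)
Your proposal is correct and follows essentially the same route as the paper: extend a right exact $F$ by the left Kan extension $\colim_{X \in \CA \downarrow G} F(X)$, and show it preserves colimits by exhibiting the explicit right adjoint $C \mapsto \Hom_\CC(F(-),C)$, whose left exactness on $\CA^{\op}$ is exactly what right exactness of $F$ provides. The only difference is that you spell out the adjunction isomorphisms and the density argument in more detail than the paper, which simply cites Lemma~\ref{colims} for full faithfulness.
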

\begin{proof}
  Consider a right-exact functor $F\colon \CA \to \CC$. We extend $F$ to a functor $F^*\colon \Lex(\CA) \to \CC$ by 
  \begin{equation*}
    F^*(X) \defeq \underset{Y \in \CA \downarrow X}{\colim} F(Y),
  \end{equation*}
for $X \in \Lex(\CA)$ and similarly on the level of morphisms using Lemma \ref{colims} (ii). We construct a right adoint $F_*\colon \CC \to \Lex(\CA)$ to $F^*$ as follows: For $C \in \CC$, let $F_*(C)$ be the functor $\CA^\op \to \Ab,\; X \mapsto \Hom_\CC(F(X),C)$. The fact that $F$ is right-exact implies that this functor is left exact. Thus it defines an object $F_*C \in \Lex(\CA)$. This construction is naturally functorial in $C$.

 Hence we have proved essential surjectivity of the functors in question. Full faithfulness follows from Lemma \ref{colims}.

\end{proof}

\begin{proposition} \label{LexChar}
  Let $\CA$ be a Grothendieck abelian category and $\CB$ a full subcategory of $\CA$ whose objects are finitely presented in $\CA$, generate $\CA$ and are closed under extensions in $\CA$. Endow $\CB$ with the unique exact structure for which the inclusion $\CB \to \CA$ is exact and reflects exactness given by Lemma \ref{ExtClosed}. The inclusion $\CB \into \CA$ extends to an equivalence $\Lex(\CB) \cong \CA$ which is unique up to a canonical isomorphism.
\end{proposition}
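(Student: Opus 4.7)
The plan is to apply Proposition~\ref{LexUP} to obtain the claimed extension, and then to prove it is an equivalence by checking the unit and counit of the associated adjunction are isomorphisms.

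Since the inclusion $\iota \colon \CB \hookrightarrow \CA$ is exact by construction of the exact structure on $\CB$ via Lemma~\ref{ExtClosed}, applying Proposition~\ref{LexUP} with $\CC = \CA$ yields a cocontinuous functor $F^* \colon \Lex(\CB) \to \CA$ extending $\iota$, together with right adjoint $F_* \colon \CA \to \Lex(\CB)$ defined by $F_*(C)(X) = \Hom_\CA(X, C)$; left exactness of $F_*(C)$ on $\CB$ follows from ordinary left exactness of $\Hom_\CA(-, C)$ applied to short exact sequences of $\CB$-objects. The uniqueness up to canonical isomorphism follows from the first equivalence in Proposition~\ref{LexUP}, and it remains to show that the unit and counit of $F^* \dashv F_*$ are isomorphisms.

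For the counit $\epsilon_C \colon F^*F_*C \to C$, I observe that by Yoneda in $\Lex(\CB)$, morphisms $X \to F_*C$ with $X \in \CB$ correspond bijectively to elements of $F_*(C)(X) = \Hom_\CA(X, C)$, so the comma $\CB \downarrow F_*C$ is identified with the category of pairs $(X, X \to C)$ formed in $\CA$. By the formula from the proof of Proposition~\ref{LexUP}, this gives $F^*F_*C = \underset{X \in \CB \downarrow C}{\colim} X$ in $\CA$, with $\epsilon_C$ the universal comparison map. Surjectivity of $\epsilon_C$ is immediate: $\coker(\epsilon_C)$ admits no nonzero map from any object of $\CB$, and so vanishes by the generating hypothesis. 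Injectivity reduces to showing $\Hom_\CA(X, \ker \epsilon_C) = 0$ for every $X \in \CB$, which in turn amounts to commuting $\Hom_\CA(X, -)$ with the colimit defining $F^*F_*C$.

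For the unit $\eta_\Phi \colon \Phi \to F_*F^*\Phi$, it suffices to evaluate at an arbitrary $Y \in \CB$, since these generate $\Lex(\CB)$ by Proposition~\ref{FpProps}. On one side, Yoneda identifies $\Phi(Y)$ with $\Hom_{\Lex(\CB)}(Y, \Phi)$, and Lemma~\ref{colims}(ii) applied with $F = Y$---whose comma category has the terminal object $(Y, \id_Y)$---rewrites this as $\underset{X \in \CB \downarrow \Phi}{\colim} \Hom_\CB(Y, X)$. On the other side, $(F_*F^*\Phi)(Y) = \Hom_\CA(Y, \underset{X \in \CB \downarrow \Phi}{\colim} X)$. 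Thus the bijectivity of $\eta_\Phi$ at $Y$ once again reduces to commuting $\Hom_\CA(Y, -)$ with the indexing colimit.

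Both isomorphism statements therefore reduce to the same commutation question, which is where I expect the main obstacle: the comma category $\CB \downarrow \Phi$ is generally \emph{not} filtered, as one can see already for $\CA = \Ab$ with $\CB$ the finitely generated free abelian groups and $\Phi = \mathbb{Z}/2$, where the parallel pair $\cdot 2$ and $0 \colon (\BZ,0) \rightrightarrows (\BZ, \mathrm{pr})$ admits no coequalizer inside $\CB \downarrow \Phi$. One cannot therefore directly invoke commutation of $\Hom_\CA(Y,-)$ with filtered colimits. Instead I would combine the finite presentation of $Y$ in $\CA$ with the explicit coequalizer-of-coproducts presentation of the colimit in the Grothendieck category $\CA$, exploiting closure of $\CB$ under extensions (and hence under finite direct sums, via $0 \to X \to X \oplus Y \to Y \to 0$) to match generators and relations on both sides. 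All three hypotheses on $\CB$---finite presentation, generation, and closure under extensions---enter jointly at this step.
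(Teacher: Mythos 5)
Your reduction is set up correctly: extending the exact inclusion $\CB\hookrightarrow\CA$ via Proposition~\ref{LexUP}, identifying the right adjoint as $C\mapsto\Hom_\CA(-,C)|_{\CB}$, getting surjectivity of the counit from generation, and observing that both the injectivity of the counit and the bijectivity of the unit collapse to the single statement that for $Y\in\CB$ and $\Phi\in\Lex(\CB)$ the comparison map
\begin{equation*}
  \underset{X\in\CB\downarrow\Phi}{\colim}\;\Hom_\CA(Y,X)\;\longrightarrow\;\Hom_\CA\bigl(Y,\underset{X\in\CB\downarrow\Phi}{\colim}\,X\bigr)
\end{equation*}
is an isomorphism. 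You also correctly diagnose why this is hard: $\CB\downarrow\Phi$ is not filtered, so finite presentation of $Y$ does not apply directly. But at exactly this point the proof stops. "Combine the finite presentation of $Y$ with the coequalizer-of-coproducts presentation of the colimit, exploiting closure under extensions, to match generators and relations" is a plan, not an argument. Concretely: writing $\colim X$ as $\coker\bigl(\bigoplus_u X_u\to\bigoplus_{(X,f)}X\bigr)$ and applying $\Hom_\CA(Y,-)$ (which does commute with the direct sums, since $Y$ is finitely presented and $\Hom$ is additive), surjectivity of the comparison map requires every $Y\to\colim X$ to lift to the coproduct, and injectivity requires every $Y\to\operatorname{im}(d)$ to lift to $\bigoplus_u X_u$; $\Hom_\CA(Y,-)$ is only left exact, so neither lifting is formal. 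This is precisely where the left exactness of $\Phi$ and the closure of $\CB$ under extensions must be brought to bear, and it is the entire mathematical content of the proposition beyond formal adjunction nonsense. The gap is genuine.

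For comparison: the paper does not prove this step either, but instead quotes it — the proof is a one-line reduction to \cite[Proposition 2.3]{MR3303245} together with the identification of the sheaf category appearing there with $\Lex(\CB)$. So your strategy, if completed, would yield a self-contained alternative to the paper's citation, and everything you wrote before the final paragraph is a correct and reasonable framing of what that cited result actually does. Two smaller points: your counit surjectivity argument should be phrased as "the composite $C\to\coker(\epsilon_C)$ kills every $X\to C$ with $X\in\CB$, hence is zero by the parallel-morphism definition of generation, hence $\coker(\epsilon_C)=0$ since $C\to\coker(\epsilon_C)$ is an epimorphism" (your phrasing quietly switches to a different characterization of generators, which happens to be equivalent in a Grothendieck category but needs saying); and for the injectivity of the counit you should also note that a class $(g\colon Y\to X,\,f\colon X\to C)$ with $fg=0$ dies in the colimit because $g$ defines a morphism $(Y,0)\to(X,f)$ in $\CB\downarrow C$, so that granting the commutation the kernel really does vanish.
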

\begin{proof}
  This is a special case of \cite[Proposition 2.3]{MR3303245}, using the fact that in our situation the category $\operatorname{Sh}(\CC,\CT^\text{add}_\Lambda)$ appearing there is canonically isomorphic to $\Lex(\CB)$ by Subsections 2.4 and 2.5 of \cite{MR3303245}.
\end{proof}

\subsection{Tensor Categories}
We refer to symmetric monoidal categories (resp. symmetric monoidal functors, resp. monoidal natural transformations) as tensor categories (resp. tensor functors, resp. tensor morphisms).

\begin{definition}
  Let $R$ be a ring.
  \begin{enumerate}[(i)]
  \item  An \emph{$R$-linear tensor category} is a tensor category $\CT$ which is additionally equiped with a $R$-linear structure for which the tensor product functor $\otimes\colon \CT \times \CT \to \CT$ is $R$-bilinear. 
  \item  An \emph{exact tensor category} is a tensor category which is additionally equipped with an exact structure for which the tensor product functor $\otimes\colon \CT \times \CT \to \CT$ is exact in each variable.
      \item  A \emph{cocomplete tensor category} is a tensor category which is cocomplete and for which the tensor product functor $\otimes\colon \CT \times \CT \to \CT$ is cocontinuous in each variable.

  \item Let $f\colon R\to S$ be a ring homomorphism. For an $R$-linear tensor category $\CT$ and an $S$-linear tensor category $\CT'$, a \emph{$f$-linear tensor functor} $\CT\to \CT'$ is a functor which is both monoidal and $f$-linear. In case $f=\id_R$, we will call such functors simply $R$-linear tensor functors.
  \end{enumerate}
\end{definition}

\begin{construction} \label{LexTensor}
  Let $\CT$ be an exact tensor category. Then $\Lex(\CT)$ is in a natural way a cocomplete tensor category:

  By Proposition \ref{LexUP} the tensor functor $\CT \times \CT \to \CT$ induces a cocontinous functor $\Lex(\CT) \times \Lex(\CT) \cong \Lex(\CT \times \CT) \to \Lex(\CT)$. The various constraints making up the tensor category $\CT$ can be described as natural transformations between certain functors. Hence again by Proposition \ref{LexUP} these extend uniquely to $\Lex(\CT)$ and make this category into an abelian tensor category.

Lemma \ref{colims} together with \cite[A.22]{Buhler} imply that every epimorphism in $\Lex(\CT)$ is a colimit of strict epimorphisms in $\CT$. This implies that the tensor product functor on $\Lex(\CT)$ is again cocontinuous in each variable.
\end{construction}

\subsection{Base change}
In this subsection we consider a cocomplete $R$-linear category $\CA$ and its base change along a ring homomorphism $R \to R'$. We recall the construction of this base change from \cite[II.1.5.2]{SaavedraRivano}:
\begin{construction} \label{BaseChange}
  The category $\CA_{R'}$ is defined as follows: Its objects are pairs $(X,a)$ consisting of an object $X \in \CA$ and a homomorphism $a\colon R' \to \End(X)$ of $R$-algebras. Morphisms $(X,a) \to (X',a)$ are morphisms $X \to X'$ in $\CA$ compatible with $a$ and $a'$ in the natural way.

There is a natural functor $\CA \to \CA_{R'},\; X \mapsto X \otimes_R R'$ which is left adjoint to the forgetful functor $\CA_{R'} \to \CA$.
\end{construction}

\begin{proposition}[{\cite[II.1.5.3.1]{SaavedraRivano}}] \label{CCBCUP}
  The functor $\CA \to \CA_{R'}$ from Construction \ref{BaseChange} satisfies the following $2$-universal property: For every $R'$-linear cocomplete category $\CB$, the functor $\CA \to \CA_{R'}$ induces an equivalence 
  \begin{equation*}
    \FunColim[R'](\CA_{R'},\CB) \to \FunColim[R](\CA,\CB)
  \end{equation*}
from the category of cocontinuous $R'$-linear functors $\CA_{R'} \to \CB$ to the category of cocontinuous $R$-linear functors $\CA \to \CB$.
\end{proposition}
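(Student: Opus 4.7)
The plan is to construct a pseudo-inverse $\Phi$ to the restriction functor
\[
\mathrm{res}_L : \FunColim[R'](\CA_{R'}, \CB) \to \FunColim[R](\CA, \CB), \quad G \mapsto G \circ L,
\]
where $L : \CA \to \CA_{R'}$ denotes the base change functor of Construction \ref{BaseChange}, and to check that both compositions are canonically isomorphic to the identity.

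Given a cocontinuous $R$-linear $F : \CA \to \CB$ and $(X,a) \in \CA_{R'}$, the object $F(X) \in \CB$ carries two $R'$-actions that agree after restriction to $R$: the \emph{external} one provided by the $R'$-linear structure on $\CB$, and the \emph{internal} one given by the composition $R' \xrightarrow{a} \End_\CA(X) \xrightarrow{F} \End_\CB(F(X))$. Since $\CB$ is cocomplete and $R$-linear, the extension of scalars $R' \otimes_R F(X)$ is defined as an object of $\CB$ (as a coproduct/coequalizer expression), and the two actions assemble into parallel morphisms $R' \otimes_R F(X) \rightrightarrows F(X)$. I set $\Phi(F)(X,a)$ to be their reflexive coequalizer, naturally functorial in $(X,a)$ and in $F$. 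Routine checks show that $\Phi(F)$ is cocontinuous and $R'$-linear; moreover the coequalizer defining $\Phi(F)(L(X))$ is split by the unit $R \to R'$, yielding a canonical isomorphism $\Phi(F) \circ L \cong F$.

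To prove $\Phi \circ \mathrm{res}_L \cong \id$, the key input is that the adjunction $L \dashv U$, where $U : \CA_{R'} \to \CA$ is the forgetful functor, is monadic with monad $T = UL = (-) \otimes_R R'$. Consequently every $(X,a) \in \CA_{R'}$ sits in a canonical reflexive coequalizer of free objects,
\[
L(X \otimes_R R') \rightrightarrows L(X) \twoheadrightarrow (X,a),
\]
namely the Beck resolution. Applying any cocontinuous $R'$-linear $G : \CA_{R'} \to \CB$ to this diagram produces a coequalizer in $\CB$, and upon comparing the two $R'$-actions appearing there --- external from the $R'$-linearity of $G$, internal from $G$ applied to the action of $R'$ on $L(X \otimes_R R')$ --- one obtains precisely the coequalizer defining $\Phi(G \circ L)(X,a)$.

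The main obstacle is establishing the monadicity of $L \dashv U$ and then carefully matching the two coequalizers. Monadicity can be extracted either from Beck's theorem, using that $U$ is conservative and preserves reflexive coequalizers (both of which are readily visible from the description of $\CA_{R'}$ as pairs $(X,a)$), or by a hands-on construction using that same description. Once monadicity is in hand, the remaining comparison between the Beck-type coequalizer and the ad hoc coequalizer defining $\Phi$ reduces to a diagram chase tracking the naturality of the internal and external $R'$-actions.
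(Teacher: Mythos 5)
Your proposal is correct and matches the approach the paper (following Saavedra Rivano, the cited source) has in mind: the pseudo-inverse sends $F$ to the functor $(X,a)\mapsto$ ``the largest quotient of $F(X)$ on which the two $R'$-actions coincide,'' which is exactly your coequalizer of the internal and external actions, and the verification via the Beck resolution $L(X\otimes_R R')\rightrightarrows L(X)\to (X,a)$ is the standard way to see that this is inverse to restriction along $L$. No substantive divergence to report.
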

Here, given a cocontinuous $R$-linear functor $F\colon \CA \to \CB$, the cocontinuous $R'$-linear extension $F'\colon \CA_{R'} \to \CB$ sends a pair $(X,a)$ to the largest quotient of $F(X)$ on which the two actions of $R'$ coincide.

Now assume in addition that $\CA$ is a cocomplete $R$-linear tensor category. Via the universal property from Proposition \ref{CCBCUP}, the tensor product functor $\CA \times \CA \to \CA$ induces an $R'$-linear functor $\CA_{R'} \times \CA_{R'}=(\CA\times\CA)_{R'} \to \CA_{R'}$. By \cite[II.1.5.4]{SaavedraRivano}, this functor makes $\CA_{R'}$ into a cocomplete $R'$-linear tensor category. Then the functor $\CA \to \CA_{R'}$ is a tensor functor which has the following $2$-universal property:

\begin{proposition} \label{TBCUP}
  For every $R'$-linear cocomplete tensor category $\CB$, the functor $\CA \to \CA_{R'}$ induces an equivalence 
  \begin{equation*}
    \FunColim[R']^\otimes(\CA_{R'},\CB) \to \FunColim[R]^\otimes(\CA,\CB)
  \end{equation*}
from the category of cocontinuous $R'$-linear tensor functors $\CA_{R'} \to \CB$ to the category of cocontinuous $R$-linear tensor functors $\CA \to \CB$.
\end{proposition}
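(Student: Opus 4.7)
The plan is to bootstrap directly from Proposition \ref{CCBCUP}, using that the tensor product on $\CA_{R'}$ was constructed precisely so that $\CA \to \CA_{R'}$ is a tensor functor satisfying a universal property for two-variable cocontinuous linear functors. All the work is in showing that the equivalence of Proposition \ref{CCBCUP} respects tensor structures; this will be entirely formal once the multivariable version of that proposition is in hand.

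First I would check that the functor in the statement is well-defined: since $\CA \to \CA_{R'}$ is a tensor functor, precomposition with it sends cocontinuous $R'$-linear tensor functors to cocontinuous $R$-linear tensor functors, and tensor natural transformations to tensor natural transformations. For essential surjectivity, starting from a cocontinuous $R$-linear tensor functor $F\colon \CA \to \CB$, Proposition \ref{CCBCUP} yields a cocontinuous $R'$-linear extension $F'\colon \CA_{R'} \to \CB$ of the underlying additive functor, unique up to canonical isomorphism. To equip $F'$ with a tensor structure, I would use that $\CA_{R'} \times \CA_{R'}$ together with the evident functor from $\CA \times \CA$ is a base change of $\CA \times \CA$ to $R'$ in the sense of Construction \ref{BaseChange}, which is exactly how the tensor product on $\CA_{R'}$ is defined in the paragraph preceding the statement. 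Consequently the two cocontinuous $R'$-bilinear functors $F'\circ\otimes_{\CA_{R'}}$ and $\otimes_\CB\circ(F'\times F')$ are the unique such extensions of $F\circ\otimes_\CA$ and $\otimes_\CB\circ(F\times F)$, and the tensor constraint of $F$, a natural isomorphism between the latter pair, extends uniquely to a natural isomorphism between the former pair. The unit constraint extends in the same way.

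Next I would verify the coherence axioms for the extended structure. Each of the pentagon, unit and symmetry identities is an equality between two natural transformations among cocontinuous $R'$-multilinear functors on some finite power of $\CA_{R'}$. Since $F$ is a tensor functor, both sides restrict to equal natural transformations on the corresponding power of $\CA$, so by the uniqueness clause in the multivariable form of Proposition \ref{CCBCUP} they are equal on all of $\CA_{R'}$. Full faithfulness on morphisms is obtained by the same principle: a tensor natural transformation $F' \to G'$ between two cocontinuous $R'$-linear tensor functors is a natural transformation of underlying functors, hence determined by its restriction to $\CA$ by Proposition \ref{CCBCUP}; and any tensor natural transformation between cocontinuous $R$-linear tensor functors $F,G\colon \CA\to\CB$ extends uniquely to a natural transformation $F' \to G'$ whose monoidality is again a compatibility between cocontinuous multilinear functors that holds on $\CA^n$ and therefore on $\CA_{R'}^n$.

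The only substantive point is the multivariable version of Proposition \ref{CCBCUP}, namely the identification $(\CA\times\CA)_{R'} \simeq \CA_{R'}\times\CA_{R'}$ as cocomplete $R'$-linear categories, and analogously for higher powers. This is already used implicitly (citing \cite[II.1.5.4]{SaavedraRivano}) when defining the tensor structure on $\CA_{R'}$, and it follows from a direct check that $\CA_{R'}\times\CA_{R'}$ satisfies the universal property of Proposition \ref{CCBCUP} for $\CA\times\CA$. I expect this to be the main technical obstacle, but it requires no new input beyond what the base change construction already supplies.
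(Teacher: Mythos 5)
Your argument is correct, and it actually supplies more detail than the paper does: the paper states Proposition \ref{TBCUP} without proof, treating it as a formal consequence of the way $\otimes_{\CA_{R'}}$ is built from Proposition \ref{CCBCUP} together with the reference to \cite[II.1.5.4]{SaavedraRivano}. Your route --- extend the underlying functor by Proposition \ref{CCBCUP}, then transport the tensor and unit constraints along the identification $(\CA\times\CA)_{R'}\cong\CA_{R'}\times\CA_{R'}$ and verify coherence and full faithfulness by the uniqueness clause --- is exactly the argument being left implicit, so there is nothing genuinely different to compare. Two small calibrations. First, the identification $(\CA\times\CA)_{R'}\cong\CA_{R'}\times\CA_{R'}$, which you single out as the main technical obstacle, is immediate from Construction \ref{BaseChange}: an object of $(\CA\times\CA)_{R'}$ is a pair $((X,Y),a)$ with $a\colon R'\to\End(X)\times\End(Y)$, i.e.\ just a pair of objects of $\CA_{R'}$. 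Second, and more substantively, the ``multivariable version'' of Proposition \ref{CCBCUP} that you need is for functors that are cocontinuous \emph{in each variable separately}, not for functors preserving colimits of the product category: the tensor product is only separately cocontinuous (it fails jointly already on finite direct sums). So you cannot literally quote Proposition \ref{CCBCUP} for the category $\CA\times\CA$; you should instead obtain the separately-cocontinuous statement by applying Proposition \ref{CCBCUP} one variable at a time (currying into $\FunColim[R'](\CA_{R'},\CB)$). This is the same gloss the paper itself makes when it constructs $\otimes_{\CA_{R'}}$, and with that reading all of your uniqueness arguments for the constraints, the coherence axioms, and full faithfulness go through.
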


\subsection{Categories of representations of group schemes}
For a ring $R$ (resp. a scheme $S$), we denote by $\Mod(R)$ (resp. $\Mod(S)$) the category of $R$-modules (resp. of quasi-coherent $\CO_S$-modules).

Let $R$ be a ring and $G$ an affine group scheme over $\Spec(R)$. 
\begin{definition}
  \begin{enumerate}[(i)]
  \item We denote by $\Rep G$ the category of representations of $G$ on arbitrary $R$-modules.
  \item We denote by $\Rep^\circ G \subset \Rep G$ the full subcategory of dualizable objects, that is of representations of $G$ on finitely generated projective $R$-modules.
  \item We denote the forgetful functor to $\Mod(R)$ on any of these categories by $\omega_G$.
  \item For a ring homomorphism $f\colon R \to S$, an affine group scheme $H$ over $\Spec(S)$ and a homomorphism $h\colon H \to  G_S$, we denote by $h^*$ the induced $f$-linear tensor functors $\Rep G \to \Rep H$ and $\Rep^\circ G \to \Rep^\circ H$.
  \end{enumerate}
\end{definition}
The subcategory $\Rep^\circ G \subset \Rep G$ is closed under tensor products. On $\Rep^\circ G$ there exists an exact structure given by those sequences for which the underlying sequence of $R$-modules is exact. We will always endow $\Rep^\circ G$ with this exact structure. Equivalently, this exact structure is induced via Lemma \ref{ExtClosed} from the embedding $\Rep^\circ G \into \Rep G$. 

\begin{lemma} \label{EDConds}
  The following are equivalent:
  \begin{enumerate}[(i)]
  \item Every object of $\Rep G$ admits an epimorphism from a direct sum of objects of $\Rep^\circ G$.
  \item The inclusion $\Rep^\circ G \into \Rep G$ induces an equivalence $\Lex(\Rep^\circ G)\cong \Rep G$.
  \end{enumerate}
\end{lemma}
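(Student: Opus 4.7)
The plan is to recognize the two conditions as the conclusion and a hypothesis of Proposition \ref{LexChar}, with Lemma \ref{colims}(i) supplying the easy direction.

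For (ii) $\Rightarrow$ (i), any $F \in \Rep G$, viewed as an object of $\Lex(\Rep^\circ G)$ via the equivalence, is by Lemma \ref{colims}(i) isomorphic to $\colim_{X \in \Rep^\circ G \downarrow F} X$, the colimit being formed in $\Rep G$. In any cocomplete abelian category such a colimit is a quotient of the coproduct of its indexing objects, so the canonical map
\[
\bigoplus_{(X,\, X \to F) \in \Rep^\circ G \downarrow F} X \;\longrightarrow\; F
\]
is an epimorphism from a direct sum of objects of $\Rep^\circ G$.

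For (i) $\Rightarrow$ (ii), I would apply Proposition \ref{LexChar} with $\CA = \Rep G$ and $\CB = \Rep^\circ G$. That $\Rep G$ is a Grothendieck abelian category is standard from the description as comodules over the Hopf algebra representing $G$. The subcategory $\Rep^\circ G$ is closed under extensions in $\Rep G$: in a short exact sequence $0 \to V_1 \to V \to V_2 \to 0$ in $\Rep G$ with $V_1, V_2$ finitely generated projective over $R$, the underlying $R$-linear sequence splits since $V_2$ is projective, so $V$ is finitely generated projective. Moreover the exact structure on $\Rep^\circ G$ fixed earlier coincides with the one induced from $\Rep G$ via Lemma \ref{ExtClosed}, since exactness in $\Rep G$ is tested on underlying $R$-modules. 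Generation of $\Rep G$ by $\Rep^\circ G$ in the separating sense of the definition is equivalent to condition (i) in any cocomplete abelian category: given (i), an epimorphism onto $Y$ from a direct sum of objects of $\Rep^\circ G$ separates parallel morphisms out of $Y$; conversely, under the separation condition, the cokernel of the canonical map $\bigoplus_{X \in \Rep^\circ G,\,h\colon X \to Y} X \to Y$ is killed by both the projection from $Y$ and the zero morphism, hence vanishes.

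The main remaining check is that every $V \in \Rep^\circ G$ is finitely presented in $\Rep G$. Filtered colimits in $\Rep G$ are computed on underlying $R$-modules, and the abelian group $\Hom_{\Rep G}(V,W)$ sits inside $\Hom_R(V,W)$ as the equalizer of the pair of $R$-linear maps encoding compatibility with the $\CO(G)$-coaction. Since $V$ is finitely generated projective, hence finitely presented, over $R$, the functor $\Hom_R(V,-)$ preserves filtered colimits; as filtered colimits of abelian groups commute with finite limits such as this equalizer, so does $\Hom_{\Rep G}(V,-)$. All hypotheses of Proposition \ref{LexChar} are now in place, and it yields the desired equivalence $\Lex(\Rep^\circ G) \cong \Rep G$ extending the inclusion.
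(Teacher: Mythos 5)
Your proof is correct and follows essentially the same route as the paper: Lemma \ref{colims}(i) supplies one implication (a colimit over the comma category is a quotient of the coproduct), and Proposition \ref{LexChar} supplies the other, with the hypotheses (Grothendieck, closed under extensions, finitely presented via dualizability, generation equivalent to condition (i)) verified exactly as you do. The only discrepancy is that the paper's two-line proof attaches Lemma \ref{colims} to $(i)\Rightarrow(ii)$ and Proposition \ref{LexChar} to $(ii)\Rightarrow(i)$ --- the opposite of your assignment --- which appears to be a transposition in the paper; your version is the one in which each cited result actually yields the implication it is invoked for.
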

\begin{proof}
  The implication $(i) \implies (ii)$ follows from Lemma \ref{colims}.

  The fact that the objects of $\Rep^\circ G$ are dualizable implies that they are finitely presented in $\Rep G$. Hence the implication $(ii) \implies (i)$ is a special case of Proposition \ref{LexChar}.
\end{proof}

\begin{definition}
  We say that $G$ has enough dualizable representations if the equivalent conditions of Lemma \ref{EDConds} are satisfied.
\end{definition}

\begin{remark}
  \begin{enumerate}[(i)]
  \item  Assume that $S$ is a regular separated noetherian scheme. Then $G$ always has enough dualizable representation if $S$ is of dimension $\leq 1$ (c.f. \cite[2.4]{MR0893468}) or if $S$ is of dimension $\leq 2$ and $G$ is smooth with connected fibers (c.f. \cite[2.5]{MR0893468}).
  \item In case $G$ is reductive, a criterion for $G$ to have enough dualizable representations is given in \cite{MR4460258}.
  \end{enumerate}

\end{remark}
For an $R$-scheme $S$, a fiber functor on $\Rep^\circ G$ over $S$ is an exact $R$-linear tensor functor $\omega\colon \Rep^\circ G \to \Mod(S)$. %We denote by $\omega_G \colon \Rep^\circ G \to \Mod_R$ the standard fiber functor (i.e. the forgetful functor).

\begin{definition} \label{TEnd}
  Let $R' \to R''$ be an homomorphism of $R$-algebra and let $\omega\colon \Rep^\circ G \to \Mod(R')$ be a fiber functor.
  \begin{enumerate}[(i)]
  \item Let $\UEnd(\omega)(R'')$ be the $R''$-module of $R''$-linear natural transformations $\omega_{R''} \to \omega_{R''}$.
  \item Let $\UEnd^\otimes(\omega)(R'')$ be the submodule of $\UEnd(\omega)(R'')$ consisting of those natural transformations $h$ such that for all $X$ and $Y$ in $\Rep^\circ \CG$, the associated endomorphism $h_{X\otimes Y}$ of $\omega(X\otimes Y)_{R''}$ is equal to $h_{X} \otimes \id_{\omega(Y)_{R''}} + \id_{\omega(X)_{R''}} \otimes h_{Y}$.
  \item For varying $R''$, these modules form natural sheaves $\UEnd^\otimes(\omega) \subset \UEnd(\omega)$ over $\Spec(R')$.
  \item We endow $\UEnd^\otimes(\omega)$ with the natural conjugation action of $\UAut^\otimes(\omega)$.
  \end{enumerate}
\end{definition}

\begin{proposition}[{\cite[Thm. 3.2]{VFF}}] \label{TannakaLie}
  There exists a natural isomorphism between the Lie algebra sheaf $\ULie(\UAut^\otimes(\omega))$  and the sheaf $\UEnd^\otimes(\omega)$. This isomorphism is equivariant with respect to the adjoint action of $\UAut^\otimes(\omega)$ on $\ULie(\UAut^\otimes(\omega))$ and the above $\UAut^\otimes(\omega)$-action on $\UEnd^\otimes(\omega)$.
\end{proposition}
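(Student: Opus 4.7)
The plan is to use the functor-of-points description of the Lie algebra of an affine group scheme. For any $R'$-algebra $R''$, let $R''[\epsilon]$ denote the ring of dual numbers (so $\epsilon^2 = 0$). Then
\begin{equation*}
  \ULie(\UAut^\otimes(\omega))(R'') = \ker\bigl(\UAut^\otimes(\omega)(R''[\epsilon]) \to \UAut^\otimes(\omega)(R'')\bigr),
\end{equation*}
where the map is induced by $\epsilon \mapsto 0$. An element of this kernel is a tensor automorphism $g$ of the base-changed fiber functor $\omega_{R''[\epsilon]}$ that reduces to the identity modulo $\epsilon$. My plan is to identify such $g$ with elements $h$ of $\UEnd^\otimes(\omega)(R'')$, functorially in $R''$ and equivariantly under the two natural group actions.

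The first step is to extract $h$ from $g$. For each $X \in \Rep^\circ G$, the $R''[\epsilon]$-linear automorphism $g_X$ of $\omega(X) \otimes_{R'} R''[\epsilon]$ has the unique form $g_X = \id + \epsilon h_X$ for some $R''$-linear endomorphism $h_X$ of $\omega(X)_{R''}$, with inverse $\id - \epsilon h_X$. A direct check shows that $g$ is natural in $X$ if and only if the family $h = (h_X)$ is, yielding a bijection between the above kernel and a subset of $\UEnd(\omega)(R'')$.

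Next I would translate the tensor axiom into a condition on $h$. Expanding $g_{X \otimes Y} = g_X \otimes g_Y$ and using $\epsilon^2 = 0$ gives
\begin{equation*}
  h_{X \otimes Y} = h_X \otimes \id_{\omega(Y)_{R''}} + \id_{\omega(X)_{R''}} \otimes h_Y,
\end{equation*}
which is exactly the Leibniz identity from Definition \ref{TEnd}(ii); the unit axiom imposes no additional constraint. This produces the desired natural isomorphism $\ULie(\UAut^\otimes(\omega)) \cong \UEnd^\otimes(\omega)$ of sheaves over $\Spec(R')$.

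Finally, for equivariance, a direct computation suffices: if $\sigma \in \UAut^\otimes(\omega)(R'')$ has pullback $\tilde\sigma$ to $R''[\epsilon]$ and $g_X = \id + \epsilon h_X$, then
\begin{equation*}
  (\tilde\sigma g \tilde\sigma^{-1})_X = \id + \epsilon\,\sigma_X h_X \sigma_X^{-1},
\end{equation*}
which corresponds under the bijection above to the conjugation action on $\UEnd^\otimes(\omega)$ fixed in Definition \ref{TEnd}(iv). No conceptual obstacle is expected; the main work is bookkeeping, chiefly the careful tracking of base changes along $R' \to R'' \to R''[\epsilon]$ and their compatibilities with the tensor structure of $\omega$ and with the two group-scheme actions being compared.
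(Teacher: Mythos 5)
The paper does not prove this proposition itself; it is quoted from \cite[Thm.\ 3.2]{VFF}. Your dual-numbers argument is the standard proof of that result and is correct as written: the identification $g_X=\id+\epsilon h_X$, the reduction of the monoidal axiom to the Leibniz identity via $\epsilon^2=0$, and the conjugation computation all go through, and the only point left implicit is that the $R''$-module structure on $\ULie$ (coming from rescaling $\epsilon$) matches the evident one on $\UEnd^\otimes(\omega)(R'')$, which is immediate from $h\mapsto ch$.
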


The following result generalizes Deligne's theorem on the fqpc-local isomorphy of any two fiber functors on a Tannakian category. 
\begin{theorem}[{Lurie (\cite[5.11]{LurieTannaka}}]  \label{FpqcIso}
  If $G$ is smooth and has enough dualizable representations, then any two fiber functors $\Rep^\circ G \to \Mod(S)$ are isomorphic fqpc-locally on $S$.
\end{theorem}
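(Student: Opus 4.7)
The plan is to show that $P := \UIsom^\otimes(\omega_1,\omega_2)$, the functor sending an $S$-scheme $T$ to the set of $\CO_T$-linear tensor isomorphisms $\omega_{1,T} \iso \omega_{2,T}$, is representable by an affine $S$-scheme which is faithfully flat and smooth over $S$. Any such morphism admits sections \'etale-locally, and hence fpqc-locally, which gives the theorem.

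First, using Lemma \ref{EDConds} and Proposition \ref{LexUP}, each fiber functor $\omega_i$ extends uniquely to a cocontinuous $R$-linear tensor functor $\widetilde\omega_i \colon \Rep G \to \Mod(S)$ admitting a right adjoint. Standard Tannakian reconstruction arguments then represent $P$ and the group sheaves $\UAut^\otimes(\omega_i)$ by affine $S$-schemes. A choice of section of $P$ over some $T \to S$ identifies $P_T$ with $\UAut^\otimes(\omega_1)_T$ as a right torsor and identifies the two automorphism schemes with one another; this produces the torsor structure once nonemptiness is established.

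To establish faithful flatness of $P \to S$ without knowing a priori that $P$ has sections, I would work with the groupoid scheme over $S \sqcup S$ whose morphism spaces are the sheaves $\UIsom^\otimes(\omega_i,\omega_j)$ for $i,j \in \{1,2\}$. Exactness of the fiber functors combined with dualizability of the source translates, via a cobar-complex argument, into flatness of the corresponding Hopf algebroid, and faithful flatness of $P \to S$ then follows from the exactness of $\omega_2$. Smoothness of $P$ is verified infinitesimally: by Proposition \ref{TannakaLie}, the relative tangent sheaf of $P \to S$ at any section over a base change $T \to S$ is identified with $\UEnd^\otimes(\omega_1)_T \cong \ULie(\UAut^\otimes(\omega_1))_T$, which is a twisted form of $\Lie(G)_T$. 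Since $G$ is smooth, this module is locally free of constant rank, so $P$ is smooth over $S$ once flatness is in hand.

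The main obstacle is the potential circularity in the infinitesimal and local-structure arguments: smoothness of $P$ is most naturally checked after a base change that trivializes it, but such a cover is precisely what the theorem provides. I expect to resolve this by working uniformly with the full groupoid scheme over $S \sqcup S$ rather than with $P$ alone, so that formal smoothness of the source/target maps can be extracted from the combined Hopf algebroid structure. As a sanity check, one can reduce to the case $S = \Spec K$ with $K$ a field, where the statement is Deligne's classical theorem on fiber functors over a field; the general case then follows from this fibral input combined with the flatness and smoothness established above via standard descent.
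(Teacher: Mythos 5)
The paper does not prove this statement at all: it is imported as a citation to Lurie's Tannaka duality paper, so there is no internal proof to compare against. Your outline does follow the strategy that Deligne and Lurie actually use --- represent $P=\UIsom^\otimes(\omega_1,\omega_2)$ by an affine $S$-scheme and show that $P\to S$ is faithfully flat, after which the tautological section of $P\times_S P\to P$ exhibits the two functors as isomorphic over the fpqc cover $P\to S$. (Note that for the fpqc-local conclusion as stated, smoothness of $P$ is not even needed; it only upgrades the result to \'etale-local triviality.)

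However, there is a genuine gap exactly where the content of the theorem lies: faithful flatness of $P\to S$. The sentence ``exactness \dots\ translates, via a cobar-complex argument, into flatness of the corresponding Hopf algebroid'' is a placeholder, not an argument. Over a general base this step is the entire difficulty --- it is where the hypotheses ``enough dualizable representations'' and exactness of the fiber functors must actually be used, e.g.\ via the coend $\int^X \omega_1(X)^\vee\otimes\omega_2(X)$ and an identification of its category of comodules --- and you do not carry it out. You also explicitly acknowledge an unresolved circularity in the smoothness argument and only ``expect to resolve'' it; a cleaner fix would be to reduce to the case $\omega_1=(\omega_G)_S$, so that $\UAut^\otimes(\omega_1)=G_S$ by reconstruction and smoothness of $P$ follows from smoothness of $G$ by descent along $P\to S$ once faithful flatness is known. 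Finally, the proposed reduction to the field case ``via standard descent'' does not work as stated: the fiberwise criterion for flatness of $P\to S$ requires $P$ to already be flat over the base, which is precisely the conclusion sought, so Deligne's theorem on the fibers does not by itself yield flatness of $P$ over $S$. As written the proposal is a plausible roadmap with its key step missing.
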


\section{Graded Fiber Functors}

Let $R$ be a ring, let $S$ be a scheme over $\Spec(R)$ and let $G$ be an affine group scheme over $\Spec(R)$ which has enough dualizable representations.

We fix a totally ordered abelian group $(\Gamma,\leq)$ which we write additively. We let $D^\Gamma$ be multiplicative group scheme over $\Spec(\BZ)$ with character group $\Gamma$.

\begin{definition}
  \begin{enumerate}[(i)]
%  \item We denote by $\Mod(S)$ the category of quasi-coherent $\CO_S$-modules.
  \item  We denote by $\Gr(S) \cong \Rep D^\Gamma_S$ the category of $\Gamma$-graded quasi-coherent $\CO_S$-sheaves with its standard structure as an abelian tensor category.
  \item We denote by $\forg\colon \Gr(S) \to \Mod(S)$ the forgetful functor.
  \end{enumerate}
\end{definition}

\begin{definition}
  \begin{enumerate}[(i)]
  \item A \emph{graded fiber functor} $\gamma$ on $\Rep^\circ G$ over $S$ is an exact $R$-linear tensor functor $\Rep^\circ G \to \Gr(S)$.
  \item A \emph{morphism} between two graded fiber functors is a tensor morphism.
  \item We denote the resulting \emph{category of graded fiber functors} on $\Rep^\circ G$ over $S$ by $\UHom^\otimes(\Rep^\circ G,\Gr)(S)$.
  \item For a morphism of schemes $S' \to S$, composition with the pullback functor $\Gr(S) \to \Gr(S')$ defines a pullback functor $\UHom^\otimes(\Rep^\circ G,\Gr)(S) \to \UHom^\otimes(\Rep^\circ G,\Gr)(S')$. With these pullback functors the categories $\UHom^\otimes(\Rep^\circ G,\Gr)(S)$ for varying $S$ form a fibered category over $\Spec(R)$ which we denote by $\UHom^\otimes(\Rep^\circ G,\Gr)$.
  \end{enumerate}
\end{definition}

\begin{definition}
  For any graded fiber functor $\gamma$ on $\Rep^\circ G$ over $S$, we let $\UAut^\otimes(\gamma)$ be the functor $(\Sch/S) \to (\text{Groups})$ which sends $S'\to S$ to the group of tensor automorphisms of $(\gamma)_S$ and morphisms to pullbacks. 
\end{definition}

\begin{proposition} \label{GammaAutRepr}
  For any graded fiber functor $\gamma$ on $\Rep^\circ G$ over $S$, the functor $\UAut^\otimes(\gamma)$ is representable by a scheme which is affine over $S$.
\end{proposition}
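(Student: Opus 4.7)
The plan is to exhibit $\UAut^\otimes(\gamma)$ as a closed subscheme of the tensor-automorphism scheme of the underlying ungraded fiber functor $\omega \defeq \forg \circ \gamma \colon \Rep^\circ G \to \Mod(S)$. To begin with, I would use that under our hypotheses ($G$ affine with enough dualizable representations), $\UAut^\otimes(\omega)$ is representable by an affine group scheme over $S$: this is the standard Tannakian reconstruction statement, consistent with the implicit use of such group schemes in Proposition \ref{TannakaLie}.

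The key observation is that giving the $\Gamma$-grading on $\gamma(V)$ for each $V \in \Rep^\circ G$ is, via the identification $\Gr(S) \cong \Rep D^\Gamma_S$, the same datum as giving a $D^\Gamma_S$-action on $\omega(V) = \forg(\gamma(V))$. Since $\gamma$ is a tensor functor, these actions are natural in $V$ and compatible with tensor products, so for each $T$-point of $D^\Gamma_S$ they assemble into a tensor automorphism of $\omega_T$. The grading data thus packages into a homomorphism of $S$-group schemes
\begin{equation*}
  \rho \colon D^\Gamma_S \longrightarrow \UAut^\otimes(\omega).
\end{equation*}
An element of $\UAut^\otimes(\omega)$ lies in $\UAut^\otimes(\gamma)$ precisely when the corresponding tensor automorphism of $\omega$ preserves the grading on every $\gamma(V)$ --- equivalently, commutes with the $D^\Gamma$-action, since the grading is the decomposition into $D^\Gamma$-eigenspaces. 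By definition of $\UAut^\otimes(\omega)$ this is exactly the condition of centralizing the image of $\rho$, and so
\begin{equation*}
  \UAut^\otimes(\gamma) = Z_{\UAut^\otimes(\omega)}\bigl(\rho(D^\Gamma_S)\bigr).
\end{equation*}

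This centralizer coincides with the fixed-point scheme of $\UAut^\otimes(\omega)$ under the conjugation $D^\Gamma_S$-action induced by $\rho$. Writing $\UAut^\otimes(\omega) = \Spec(A)$ and noting that the diagonalizable group $D^\Gamma_S$ acts on the affine group scheme by Hopf algebra automorphisms, the action corresponds to a $\Gamma$-grading $A = \bigoplus_{a \in \Gamma} A^a$. A direct computation with the coaction shows that the fixed-point subscheme is cut out by the ideal generated by the homogeneous components of nonzero degree, so it equals $\Spec\bigl(A/\langle A^a : a \neq 0\rangle\bigr)$, a closed subscheme of $\UAut^\otimes(\omega)$ and therefore affine over $S$.

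The main obstacle is the initial representability input for $\UAut^\otimes(\omega)$; once this standard Tannakian fact is granted, the remaining steps are formal, with the diagonalizability of $D^\Gamma$ providing a clean closed-subscheme realization of the centralizer and yielding affineness for free.
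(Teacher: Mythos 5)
Your argument is correct, and it shares its first step with the paper's proof --- both reduce to the representability of $\UAut^\otimes(\forg\circ\gamma)$ by a scheme affine over $S$ (the paper justifies this via Theorem \ref{FpqcIso}: fpqc-locally on $S$ this functor is isomorphic to $G$, so one concludes by descent; your appeal to ``standard Tannakian reconstruction'' is the same input) --- but it diverges in how it realizes $\UAut^\otimes(\gamma)$ inside it. The paper simply observes that $\UAut^\otimes(\gamma)\to\UAut^\otimes(\forg\circ\gamma)$ is a monomorphism and asserts it is a closed immersion ``by the argument used in the proof of \cite[Theorem 4.6]{FFF}'', i.e.\ by a direct check that preserving the graded pieces is a closed condition. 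You instead package the gradings into the homomorphism $\rho\colon D^\Gamma_S\to\UAut^\otimes(\forg\circ\gamma)$ --- which is precisely the cocharacter $\chi(\gamma)$ the paper constructs immediately after this proposition --- identify $\UAut^\otimes(\gamma)$ with the centralizer of $\rho$, and invoke the standard fact that the fixed-point subscheme of a diagonalizable group acting on an affine scheme is closed (cut out by the nonzero-degree part of the induced grading on the coordinate algebra). This is a genuinely different, more structural route: it is self-contained rather than deferring to an external lemma, it works for arbitrary $\Gamma$ since $\CO_S[\Gamma]$-comodules are the same as $\Gamma$-graded modules regardless of whether $\Gamma$ is finitely generated, and it makes the link to $\chi(\gamma)$ explicit; the paper's route is shorter on the page but only because the closed-condition computation is outsourced to \cite{FFF}.
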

\begin{proof}
  The functor $\forg\colon \Gr(S) \to \Mod(S)$ induces a monomorphism $\UAut^\otimes(\gamma) \to \UAut^\otimes(\forg \circ \gamma)$ to the functor $\UAut^\otimes(\forg\circ\gamma)$ which is representable by a scheme which is affine over $S$. (This representability follows e.g. from Theorem \ref{FpqcIso} which implies that $\UAut^\otimes(\forg\circ\gamma)$ is isomorphic to $G$fpqc-locally on $S$ and hence representable by flat descent.) So it suffices to show that this monomorphism is representable by a closed immersion. This can be shown by the argument used in the proof of \cite[Theorem 4.6]{FFF}.
\end{proof}

\begin{construction}
  Since $\Gr(S)$ can be identified with the category of representations of $D^\Gamma$ over $S$, by \cite[II.3.1.1]{SaavedraRivano} the action of $D^\Gamma$ gives an isomorphism
  \begin{equation*}
    D^\Gamma_S \isoto \UAut^\otimes(\forg\colon \Gr(S) \to \Mod(S)).
  \end{equation*}
Thus to any fiber functor $\gamma$ on $\Rep^\circ G$ over $S$ we can associate the cocharacter
\begin{equation*}
  \chi(\gamma)\colon D^\Gamma_S \cong \UAut^\otimes(\forg\colon \Gr(S) \to \Mod(S)) \to \UAut^\otimes(\forg\circ \gamma).
\end{equation*}
\end{construction}

\begin{definition}
  Let $\GFF(S)$ be the following category: 

Objects are pairs $(\omega,\chi)$ consisting of a fiber functor $\omega$ on $\Rep^\circ G$ over $S$ and a homomorphism $\chi\colon D^\Gamma_S \to \UAut^\otimes(\forg\circ \gamma)$ over $S$.

A morphism $(\omega,\chi) \to (\omega',\chi')$ is a tensor morphism $\lambda\colon \omega \to \omega'$ for which the following diagram commutes. Here the vertical morphism sends a tensor automorphism $\psi$ of $\omega$ to the tensor automorphism $\lambda\circ\psi\circ\lambda^{-1}$ of $\omega'$, which is well-defined since $\lambda$ is an isomorphism by \cite[I.5.2.3]{SaavedraRivano}.

\begin{equation*}
  \xymatrix@R=1pt{
    & \UAut^\otimes(\omega) \ar[dd] \\
    D^\Gamma_S \ar[ur]^{\chi} \ar[dr]_{\chi'} & \\
    & \UAut^\otimes(\omega') \\
  }
\end{equation*}
\end{definition}

The following generalizes a result of Saavedra Rivano for Tannakian categories, c.f. \cite[IV.1.3]{SaavedraRivano}.
\begin{theorem}
  Every graded fiber functor $\gamma$ on $\Rep^\circ G$ over $S$ fits into the following commutative diagram:
  \begin{equation*}
    \xymatrix{
      \Rep^\circ G \ar[rd]^\gamma \ar[d] & \\
      \Rep^\circ \UAut^\otimes(\forg\circ\gamma) \ar[r]^{\chi(\gamma)} & \Gr(S)
    }
  \end{equation*}

Hence the natural functor $\UHom^\otimes(\Rep^\circ G,\Gr)(S) \to \GFF(S)$ which sends $\gamma$ to $(\forg\circ\gamma,\chi(\gamma))$ is an equivalence of categories.
\end{theorem}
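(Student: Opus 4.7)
The plan is to establish the commutative diagram first, and then derive the equivalence of categories formally. Throughout, set $\omega \defeq \forg \circ \gamma$, which is a fiber functor on $\Rep^\circ G$ over $S$.

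The central ingredient is the construction of a natural $R$-linear exact tensor functor $F_\omega : \Rep^\circ G \to \Rep^\circ \UAut^\otimes(\omega)$ whose composition with the forgetful functor $\Rep^\circ \UAut^\otimes(\omega) \to \Mod(S)$ recovers $\omega$. For each $X \in \Rep^\circ G$, the $\CO_S$-module $\omega(X)$ carries a tautological action of $\UAut^\otimes(\omega)$ by the very definition of the latter; functoriality in $X$, compatibility with the tensor structure, and exactness are all built into the construction. For this tautological data to really define a representation of a group scheme, one needs $\UAut^\otimes(\omega)$ itself to be representable by an affine $S$-scheme, which follows from Theorem \ref{FpqcIso} combined with flat descent, by the same argument as in the proof of Proposition \ref{GammaAutRepr}.

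Next I would verify that $\chi(\gamma)^* \circ F_\omega = \gamma$. Both functors agree on underlying $\CO_S$-modules, so only the induced $D^\Gamma_S$-actions (equivalently, the gradings) need to be compared. Under the identification $D^\Gamma_S \cong \UAut^\otimes(\forg\colon \Gr(S) \to \Mod(S))$, a section $d$ of $D^\Gamma_S$ acts on a graded $\CO_S$-module by scaling its $\eta$-th graded piece by $\eta(d)$ for each $\eta \in \Gamma$. By the definition of $\chi(\gamma)$ as the whiskering of this identification with $\gamma$, the automorphism $\chi(\gamma)(d)$ of $\omega(X)$ acts in precisely the same way, with the graded decomposition being that of $\gamma(X)$. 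Hence $\chi(\gamma)^* \circ F_\omega$ equips $\omega(X)$ with exactly the grading of $\gamma(X)$, establishing the diagram.

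From the diagram the equivalence follows by constructing the inverse $(\omega,\chi) \mapsto \chi^* \circ F_\omega$. Essential surjectivity together with the identities $\forg \circ (\chi^* \circ F_\omega) = \omega$ and $\chi(\chi^* \circ F_\omega) = \chi$ is immediate from the construction. For fully faithfulness, a morphism $\lambda: (\omega,\chi) \to (\omega',\chi')$ in $\GFF(S)$ is by definition a tensor isomorphism $\omega \to \omega'$ whose induced conjugation $\UAut^\otimes(\omega) \to \UAut^\otimes(\omega')$ intertwines $\chi$ and $\chi'$; this is equivalent to saying that $\lambda$ is $D^\Gamma_S$-equivariant for the actions coming from $\chi$ and $\chi'$, so it lifts uniquely to a graded tensor morphism $\chi^* \circ F_\omega \to (\chi')^* \circ F_{\omega'}$. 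The main technical hurdle is the construction of $F_\omega$ in the requisite generality, which rests on Theorem \ref{FpqcIso} to guarantee that $\UAut^\otimes(\omega)$ truly is an affine $S$-group scheme rather than merely a fpqc sheaf of groups, so that the tautological action on each $\omega(X)$ organizes into an honest object of $\Rep^\circ \UAut^\otimes(\omega)$.
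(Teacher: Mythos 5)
Your proposal is correct and follows essentially the same route as the paper: the paper's (very terse) proof likewise deduces the commutativity of the diagram directly from the construction of $\chi(\gamma)$ and then defines the inverse functor $\GFF(S) \to \UHom^\otimes(\Rep^\circ G,\Gr)(S)$ by pulling back the tautological functor along the given cocharacter, exactly your $(\omega,\chi) \mapsto \chi^*\circ F_\omega$. Your write-up simply makes explicit the points the paper leaves implicit (representability of $\UAut^\otimes(\omega)$ via Theorem \ref{FpqcIso}, the comparison of gradings, and the identification of $\GFF$-morphisms with $D^\Gamma_S$-equivariant, hence graded, tensor morphisms), all of which are accurate.
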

\begin{proof}
  The commutativity of the diagram follows from the construction of $\chi(\gamma)$. Then using this diagram one readily defines a functor $\GFF(S) \to \UHom^\otimes(\Rep^\circ G,\Gr)(S)$ which is inverse to the functor in question.
\end{proof}
\section{Filtered Fiber Functors} \label{Filtered}
We continue with the setup from the previous section.

\subsection{Filtered Modules}
Let $S$ be a scheme.
\begin{definition}
  We denote by $\Fil(S)$ the category of $\Gamma$-filtered quasi-coherent sheaves on $S$, that is the following category:
  \begin{enumerate}[(i)]
  \item Objects of $\Fil(S)$ consist of a quasi-coherent $\CO_S$-module $\CM$ of together with an increasing filtration $(F^{\leq\gamma} \CM)_{\gamma\in \Gamma}$ by quasi-coherent submodules $F^{\leq\gamma} \CM \subset \CM$ such that $\cap_{\gamma\in\Gamma} F^{\leq\gamma} \CM=0$ and $\cup_{\gamma\in \Gamma}F^{\leq \gamma} \CM=\CM$. 
  \item Morphisms $(\CM,(F^{\leq\gamma} \CM)_\gamma) \to (\CN,(F^{\leq\gamma} \CN)_\gamma)$ are $\CO_S$-linear morphisms $h\colon \CM \to \CN$ which satisfy $h(F^{\leq\gamma}\CM) \subset F^{\leq\gamma}\CN$ for all $\gamma \in \Gamma$.
  \end{enumerate}
\end{definition}
By abuse of notation we shall often write $\CM$ for an object $(\CM,(F^{\leq\gamma} \CM)_{\gamma\in \Gamma})$ of $\Fil(S)$.

This category is naturally a tensor category: The tensor product of $\CM$ and $\CN$ is given by the module $\CM \otimes \CN$ with the filtration given by
\begin{equation*}
  F^{\leq\gamma}(\CM\otimes \CN)=\sum_{\gamma_1+\gamma_2=\gamma}\operatorname{im}(F^{\leq\gamma_1}\CM \otimes F^{\leq\gamma_2}\CN \to \CM \otimes \CN).
\end{equation*}

As in \cite[Lemma 4.2]{FFF}, the dualizable objects of $\Fil(S)$ are those for which $\CM$ is a locally free $\CO_S$-module of finite rank and for which all $F^{\leq\gamma} \CM$ are locally direct summands of $\CM$.

We call a sequence $$0 \to \CL \to \CM \to \CN \to 0$$ in $\Fil(S)$ exact if the sequence $$0 \to F^{\leq\gamma} \CL \to F^{\leq\gamma} \CM \to F^{\leq\gamma} \CN \to 0$$ is an exact sequence of quasi-coherent sheaves for all $\gamma \in \Gamma$. This makes $\Fil(S)$ into an exact category.

In case $S=\Spec(R)$ for a ring $R$ we also write $\Fil(R)$ for $\Fil(S)$. This is then a $R$-linear exact tensor category.

 For $\CM \in \Fil(S)$ and $\gamma \in \Gamma$ we let $F^{<\gamma}\CM \defeq \cup_{\gamma' < \gamma}F^{\leq\gamma'}$. There is a natural exact tensor functor $\gr\colon \Fil(S) \to \Gr(S)$ which sends a filtered module $\CM$ to its associated graded $\oplus_{\gamma\in \Gamma}F^{\leq\gamma}\CM/F^{<\gamma}\CM$.

Similarly, there is a natural exact tensor functor $\fil \colon \Gr(S) \to \Fil(S)$ which sends a graded module $\CM=\oplus_{\gamma\in \Gamma}\CM^\gamma$ to the same module $\CM$ equipped with the filtration $F^{\leq\gamma}\CM =\oplus_{\gamma'\leq \gamma}\CM^{\gamma'}$.

\subsection{Filtered Fiber Functors}

\begin{definition}
  Let $S$ be a scheme over $R$.
  \begin{enumerate}[(i)]
  \item A \emph{filtered fiber functor} on $\Rep^\circ G$ over $S$ is an exact $R$-linear tensor functor $\phi\colon \Rep^\circ G \to \Fil(S)$.
  \item An \emph{isomorphism} $\phi \to \phi'$ of such filtered fiber functors is an isomorphism of tensor functors.
  \item For a filtered fiber functor $\phi$, we call the functor $\forg\circ \phi\colon \Rep^\circ G \to \Fil(S) \to \Mod(S)$ the \emph{underlying fiber functor of $\phi$}.
  \item For a morphism of schemes $S' \to S$ and a filtered fiber functor $\phi$ on $\Rep^\circ G$ over $S$, we let $\phi_{S'}$ be the composition $\Rep^\circ G \to \Fil(S) \to \Fil(S')$.
  \end{enumerate}
\end{definition}

For a filtered fiber functor $\phi$ with underlying fiber functor $\omega$ and an object $X \in \Rep^\circ G$, we will denote the filtration of $\omega(X)$ given by $\phi$ by $(\omega(X)^{\phi\leq\gamma})_{\gamma \in\Gamma}$.
\begin{definition}
    For a filtered fiber functor $\phi$ on $\Rep^\circ G$ over $S$, we let $\UAut^\otimes(\phi)$ be the functor $(\Sch/S) \to (\text{Groups})$ which sends $S' \to S$ to the group of tensor automorphisms of $\phi_{S'}$ and morphisms to pullbacks.
\end{definition}

The following fact is proved in the same way as Proposition \ref{GammaAutRepr}.
\begin{proposition} \label{PhiAutRepr}
  Let $\phi$ be a filtered fiber functor on $\Rep^\circ G$ over $S$. The functor $\UAut^\otimes(\phi)$ is representable by a closed subgroup scheme of $\UAut^\otimes(\forg\circ\phi)$. In particular it is affine over $S$.
\end{proposition}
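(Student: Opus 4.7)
The plan is to follow the template of Proposition \ref{GammaAutRepr}. The forgetful tensor functor $\forg\colon \Fil(S) \to \Mod(S)$ induces a natural transformation of group-valued sheaves $\UAut^\otimes(\phi) \to \UAut^\otimes(\forg\circ\phi)$, and this is a monomorphism since a tensor automorphism of $\phi_{S'}$ is determined by its underlying natural transformation on $\forg\circ\phi_{S'}$. By Theorem \ref{FpqcIso} and fpqc descent the target is representable by an affine group scheme over $S$. It therefore suffices to show that this monomorphism is representable by a closed immersion.

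To this end, I would take an $S$-scheme $T$ and a $T$-point $g\in \UAut^\otimes(\forg\circ\phi)(T)$ and describe the locus on $T$ where $g$ lifts to $\UAut^\otimes(\phi)$. Unwinding the definitions, $g$ lifts exactly when, for every $X\in \Rep^\circ G$ and every $\gamma\in\Gamma$, the automorphism $g_X$ of $\omega(X)_T$ preserves the subsheaf $(\omega(X)^{\phi\leq\gamma})_T$. Since $\phi(X)$ is a dualizable object of $\Fil(S)$, each $\omega(X)^{\phi\leq\gamma}$ is a locally direct summand of $\omega(X)$, and the preservation condition is equivalent to the vanishing of the composition
$$(\omega(X)^{\phi\leq\gamma})_T \hookrightarrow \omega(X)_T \xrightarrow{g_X} \omega(X)_T \twoheadrightarrow \bigl(\omega(X)/\omega(X)^{\phi\leq\gamma}\bigr)_T,$$
which locally on $T$ is cut out by the vanishing of finitely many sections of $\CO_T$.

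Intersecting these closed subschemes over all pairs $(X,\gamma)$ yields the representing closed subscheme of $T$, giving the desired closed immersion; the image is then automatically a subgroup scheme because the defining conditions are stable under composition and inversion. This is precisely the argument used in the proof of \cite[Theorem 4.6]{FFF}. The main concern is organizational rather than mathematical: the intersection ranges over a possibly large class of pairs, but the intersection of any family of closed subschemes of an affine scheme remains closed (corresponding to the sum of the defining quasicoherent ideal sheaves), so no difficulty arises.
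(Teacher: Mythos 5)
Your argument is correct and is essentially the paper's own proof: the paper just says the proposition ``is proved in the same way as Proposition \ref{GammaAutRepr}'', i.e.\ exhibit $\UAut^\otimes(\phi)\to\UAut^\otimes(\forg\circ\phi)$ as a monomorphism into a representable affine target (via Theorem \ref{FpqcIso} and descent) and check that it is a closed immersion by the argument of \cite[Theorem 4.6]{FFF}, which is exactly what you spell out. The only point worth recording explicitly is that preservation of all the subsheaves $(\omega(X)^{\phi\leq\gamma})_T$ by $g_X$ for every $X$ already forces $g_X^{-1}$ to preserve them as well (apply the condition to $X^\vee$ with its dual filtration and use that $g$ is a tensor automorphism), so the locus you describe really is the lifting locus.
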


\begin{definition}
  Using Propositions \ref{GammaAutRepr} and \ref{PhiAutRepr}, we can associate to any filtered fiber functor $\phi$ on $\Rep^\circ G$ over $S$ the following group schemes which are affine over $S$:
  \begin{enumerate}[(i)]
  \item $G(\phi) \defeq \UAut^\otimes(\forg \circ \phi)$
  \item $P(\phi)\defeq \UAut^\otimes(\phi) \subset G(\phi)$
  \item $L(\phi)\defeq \UAut^\otimes(\gr\circ\phi)$
  \item $U(\phi)\defeq \ker(P(\phi)\toover{\gr} L(\phi))$
  \end{enumerate}
\end{definition}

\begin{lemma} \label{PhiExt}
  Let $S$ be a scheme over $R$. Every filtered fiber functor $\phi\colon \Rep^\circ G \to \Fil(S)$ extends to a cocontinuous tensor functor $\tilde\phi\colon \Rep G \to \Fil(S)$. Such an extension $\tilde\phi$ is unique up to a unique isomorphism.
\end{lemma}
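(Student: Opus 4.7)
The plan is to invoke the universal property of the Gabriel--Quillen embedding. Since $G$ has enough dualizable representations, Lemma \ref{EDConds}(ii) identifies $\Lex(\Rep^\circ G) \cong \Rep G$, so it suffices to extend $\phi$ to a cocontinuous tensor functor $\Lex(\Rep^\circ G) \to \Fil(S)$.

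First I would verify the purely additive prerequisites on the target. The category $\Fil(S)$ is cocomplete: for a small diagram $(\CM_i)$, the underlying module is $\colim_i \CM_i \in \Mod(S)$ equipped with the filtration $F^{\leq\gamma}\colim_i \CM_i \defeq \Img\bigl(\colim_i F^{\leq\gamma}\CM_i \to \colim_i \CM_i\bigr)$, and one checks the universal property directly. Moreover the tensor bifunctor on $\Fil(S)$ is cocontinuous in each variable: the underlying tensor product on $\Mod(S)$ is cocontinuous, and the defining formula $F^{\leq\gamma}(\CM\otimes\CN)=\sum_{\gamma_1+\gamma_2=\gamma}\Img(F^{\leq\gamma_1}\CM\otimes F^{\leq\gamma_2}\CN\to\CM\otimes\CN)$ involves only sums and images of colimit-preserving operations.

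Since $\phi$ is exact, in particular right exact, Proposition \ref{LexUP} produces a cocontinuous additive extension $\tilde\phi\colon \Lex(\Rep^\circ G)\cong\Rep G \to \Fil(S)$, unique up to a unique isomorphism. To equip $\tilde\phi$ with a tensor structure, I would regard the tensor constraint of $\phi$ as a natural isomorphism between two bifunctors $\Rep^\circ G \times \Rep^\circ G \to \Fil(S)$, namely $(X,Y)\mapsto \tilde\phi(X)\otimes \tilde\phi(Y)$ and $(X,Y) \mapsto \tilde\phi(X\otimes Y)$. By cocontinuity of $\tilde\phi$, of $\otimes$ on $\Fil(S)$, and of $\otimes$ on $\Rep G$ (which follows from Construction \ref{LexTensor}), both bifunctors are cocontinuous separately in each variable. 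The desired natural isomorphism on all of $\Rep G\times\Rep G$ is then obtained by applying Proposition \ref{LexUP} once in each variable in turn, using the fact that natural transformations between cocontinuous functors out of $\Lex(\Rep^\circ G)$ are determined by their restriction to $\Rep^\circ G$. The associativity, unit and symmetry coherence diagrams then hold on all of $\Rep G$ by the same uniqueness principle, since both sides of each diagram restrict to equal natural transformations on $\Rep^\circ G$.

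Uniqueness of the extension as a cocontinuous tensor functor likewise reduces to the uniqueness in Proposition \ref{LexUP}: any cocontinuous tensor extension of $\phi$ is in particular a cocontinuous additive extension, so is uniquely isomorphic to $\tilde\phi$ through an isomorphism that is automatically compatible with the tensor constraints by uniqueness of their extension. The main obstacle I anticipate is the bivariate version of the universal property used to extend the tensor constraint; beyond that, the argument is essentially a bookkeeping exercise in applying Proposition \ref{LexUP} to each piece of tensor data in turn.
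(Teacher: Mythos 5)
Your argument is correct and is essentially the paper's proof: the paper simply invokes the equivalence $\Lex(\Rep^\circ G)\cong\Rep G$ coming from the enough-dualizable-representations hypothesis together with the universal property of Proposition \ref{LexUP}, leaving the extension of the tensor data implicit (it is handled exactly as in Construction \ref{LexTensor}, which is the bivariate application of the universal property you describe). The only detail to watch is that your proposed formula for colimits in $\Fil(S)$ can violate the separatedness condition $\bigcap_\gamma F^{\leq\gamma}\CM=0$, so one should pass to the separated quotient (the separated exhaustive filtered modules form a reflective subcategory); this does not affect the argument.
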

\begin{proof}
  By our assumption on $G$, the inclusion $\Rep^\circ G \into \Rep G$ induces an equivalence $\Lex(\Rep^\circ G) \isoto \Rep G$. Under this equivalence, the existence and uniqueness of $\tilde \phi$ follows from the universal property given by Proposition \ref{LexUP}.
\end{proof}
\begin{proposition} \label{PhiBC}
  We consider ring homomorphisms $R \to R' \to R''$ and a filtered fiber functor $\phi\colon \Rep^\circ G \to \Fil(R'')$.

  \begin{enumerate}[(i)]
  \item   There exists a filtered fiber functor $\phi^{R'}$ on $\Rep^\circ G_{R'}$ over $R''$ which makes the following diagram commute:
  \begin{equation*}
    \xymatrix{
    \Rep^\circ G \ar[d] \ar[rd]^\phi & \\
    \Rep^\circ G_{R'} \ar[r]_{\phi^{R'}} & \Fil(R'')
    }
  \end{equation*}
  Such a functor $\phi^{R'}$ is unique up to a unique isomorphism.
  \item   The base change functor $\Rep^\circ G \to \Rep^\circ G_{R'}$ induces an isomorphism from the group of tensor automorphisms $\phi^{R'}$ to the group of tensor automorphisms of $\phi$.
  \item   The base change functor $\Rep^\circ G \to \Rep^\circ G_{R'}$ induces isomorphisms $P(\phi^{R'}) \isoto P(\phi)$, $L(\phi^{R'}) \isoto L(\phi)$ and $U(\phi^{R'}) \isoto U(\phi)$ of group schemes over $R''$.
  \end{enumerate}
\end{proposition}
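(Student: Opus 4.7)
The plan is to construct $\phi^{R'}$ by extending $\phi$ to a cocontinuous tensor functor on the cocomplete abelian category $\Rep G$, applying the universal property of base change for cocomplete linear tensor categories, and restricting back to dualizables.

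First, one shows that $G_{R'}$ also has enough dualizable representations: given any $M \in \Rep G_{R'}$, Lemma \ref{EDConds} produces a $G$-equivariant epimorphism $\bigoplus_i X_i \twoheadrightarrow M$ with $X_i \in \Rep^\circ G$; tensoring with $R'$ and composing with the multiplication $M \otimes_R R' \twoheadrightarrow M$ yields a $G_{R'}$-equivariant epimorphism from $\bigoplus_i (X_i)_{R'}$ onto $M$. Hence $\Rep G_{R'} \cong \Lex(\Rep^\circ G_{R'})$. Moreover there is a canonical equivalence $(\Rep G)_{R'} \cong \Rep G_{R'}$ of cocomplete $R'$-linear tensor categories: an object of $(\Rep G)_{R'}$ is an $R$-module equipped with commuting $G$- and $R'$-actions, equivalently a $G_{R'}$-representation on the underlying $R'$-module.

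By Lemma \ref{PhiExt}, $\phi$ extends to a cocontinuous $R$-linear tensor functor $\tilde\phi\colon \Rep G \to \Fil(R'')$, and by Proposition \ref{TBCUP} applied to the $R'$-linear structure on $\Fil(R'')$ this extends uniquely to a cocontinuous $R'$-linear tensor functor $\tilde\phi^{R'}\colon \Rep G_{R'} \to \Fil(R'')$. One defines $\phi^{R'}$ as the restriction of $\tilde\phi^{R'}$ to $\Rep^\circ G_{R'}$. As a tensor functor $\phi^{R'}$ preserves dualizable objects and duality; given a short exact sequence $0 \to A \to B \to C \to 0$ in $\Rep^\circ G_{R'}$, applying the cokernel-preserving functor $\tilde\phi^{R'}$ both to this sequence and to its dual $0 \to C^\vee \to B^\vee \to A^\vee \to 0$, and then invoking exactness of duality on dualizable objects of $\Fil(R'')$, yields exactness of the image sequence. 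Uniqueness in (i) follows from the uniqueness statements in Lemma \ref{PhiExt} and Proposition \ref{TBCUP}.

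For (ii) and (iii), the same chain of universal properties produces natural bijections between tensor automorphisms of $\phi^{R'}$, of $\tilde\phi^{R'}$, of $\tilde\phi$, and of $\phi$; unwinding shows the composite is restriction along the base change functor $\Rep^\circ G \to \Rep^\circ G_{R'}$, proving (ii). For (iii), applying this argument after further scalar extension along any $R''\to A$—using that the construction of $\phi^{R'}$ commutes with such base change—yields the isomorphism $P(\phi^{R'}) \isoto P(\phi)$ of affine group schemes over $R''$. The isomorphism $L(\phi^{R'}) \isoto L(\phi)$ follows by the same argument applied to $\gr\circ\phi^{R'}$ versus $\gr\circ\phi$, and $U(\phi^{R'}) \isoto U(\phi)$ follows by taking the kernel of $P \to L$ on both sides. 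The main obstacle will be making precise the identification $(\Rep G)_{R'} \cong \Rep G_{R'}$ as cocomplete $R'$-linear tensor categories and verifying exactness of $\phi^{R'}$ via the duality argument, while ensuring naturality of all identifications throughout parts (ii) and (iii).
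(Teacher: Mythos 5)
Your proposal follows essentially the same route as the paper: extend $\phi$ to a cocontinuous tensor functor on $\Rep G$ via Lemma \ref{PhiExt}, identify $(\Rep G)_{R'}$ with $\Rep G_{R'}$ (the paper cites \cite[II.2.0.02]{SaavedraRivano} where you argue it by hand), apply the universal property of Proposition \ref{TBCUP}, restrict to dualizables, deduce exactness from dualizability, and prove (iii) by base-changing along arbitrary $R''$-algebras. Your explicit verifications that $G_{R'}$ has enough dualizable representations and that exactness follows from the duality argument are correct elaborations of steps the paper leaves implicit.
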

\begin{proof}
  (i) By Lemma \ref{PhiExt}, the functor $\phi$ extends to a cocontinuous tensor functor $\tilde\phi\colon \Rep G \to \Fil(R')$ which is unique up to unique isomorphism. By \cite[II.2.0.02]{SaavedraRivano}, the functor $\Rep G \to \Rep G_{R'}$ is canonically isomorphic to the functor $\Rep G \to (\Rep G)_{R'}$ from Construction \ref{BaseChange}. Hence by the universal property given by Proposition \ref{TBCUP}, there is a cocontinuous $R'$-linear tensor functor $\tilde\phi^{R'}$, which is unique up to unique isomorphism, which makes the following diagram commute:
  \begin{equation*}
    \xymatrix{
      \Rep G \ar[rd]^{\tilde\phi} \ar[d] & \\
      \Rep G_{R'} \ar[r]_{\tilde\phi^{R'}} & \Fil(R').
      }
    \end{equation*}

    By composing $\tilde\phi^{R'}$ with the inclusion $\Rep^\circ G_{R'} \into \Rep G_{R'}$ we obtain a functor $\phi^{R'}$ as desired, which is however a priori only right exact. However, the fact that every element of $\Rep^\circ G_{R'}$ is dualizable implies that $\phi^{R'}$ is exact.

    From the various universal properties which we have used to construct it, one checks that any such functor $\phi^{R'}$ must arise in this way and that hence it is unique up to unique isomorphism.

    (ii) In the construction of $\phi^{R'}$ given above, the various universal properties show that the groups of tensor automorphisms of the functors $\phi$, $\tilde\phi$, $\tilde\phi^{R'}$ and $\phi^{R'}$ are canonically equal.

    (iii) Let $R'''$ be an $R''$-algebra. Then by (i) the filtered fiber functors $(\phi_{R'''})^{R'}$ and $(\phi^{R'})_{R'''}$ on $\Rep^\circ G_{R'}$ over $R'''$ are isomorphic. Applying (ii) two these fiber functors for varying $R'''$ shows that the induced homomorphism $P(\phi^{R'})\to P(\phi)$ is an isomorphism. Carrying out the analogue of the above arguments for graded instead of filtered fiber functors shows that $L(\phi^{R'}) \to L(\phi)$ is an isomorphism. This implies that $U(\phi^{R'}) \to U(\phi)$ is an isomorphism.
\end{proof}
\subsection{Splittings}
We continue with the notation from the previous subsection. We fix a scheme $S$ over $R$ and a filtered fiber functor on $\phi$ on $\Rep^\circ G$ over $S$. We denote the underlying fiber functor $\forg \circ \phi\colon \Rep^\circ G \to \Mod(S)$ of $\phi$ by $\omega$.
\begin{definition}
  \begin{enumerate}[(i)]
  \item A \emph{splitting} of $\phi$ is a graded fiber functor $\gamma$ on $\Rep^\circ G$ over $S$ for which $\phi = \fil\circ\gamma$. 
  \item The filtered fiber functor $\phi$ is \emph{splittable} if there exists such a splitting.
  \item The functor $\USpl(\phi) \colon (\textbf{Sch}/S) \to (\text{Sets})$ sends a scheme $S'$ over $S$ to the set of splittings of $\phi_{S'}$ and acts on morphisms by pullbacks.
  \end{enumerate}
\end{definition}

The following generalizes \cite[Lemma 4.10]{FFF} and is proved in exactly the same way:
\begin{lemma} \label{SplittingCochar}
  Giving a splitting of $\phi$ is the same as giving a homomorphism $\chi\colon D^\Gamma_S \to P(\phi)$ over $S$ whose composition with $\gr\colon P(\phi) \to L(\phi)$ is the homomorphism $\chi(\gr\circ\phi) \colon D^\Gamma_S \to L(\phi)$.
\end{lemma}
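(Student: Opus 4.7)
The plan is to translate the condition of being a splitting into two conditions on a cocharacter, using the equivalence of the theorem at the end of Section~3, and then to verify the equivalence by a weight-space computation. This is the same strategy as in the proof of \cite[Lemma 4.10]{FFF}.

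First, since $\forg\circ\fil=\forg$, any splitting $\gamma$ of $\phi$ automatically satisfies $\forg\circ\gamma=\omega$; hence a splitting of $\phi$ is the same as a graded fiber functor with underlying functor $\omega$ for which $\fil\circ\gamma=\phi$. By the theorem at the end of Section~3, graded fiber functors with underlying functor $\omega$ are in canonical bijection with cocharacters $\chi\colon D^\Gamma_S\to G(\phi)=\UAut^\otimes(\omega)$; I write $\gamma_\chi$ for the graded fiber functor corresponding to $\chi$, which equips each $\omega(X)$ with a weight decomposition $\omega(X)=\bigoplus_\mu\omega(X)^\mu_\chi$.

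Next, I translate the condition $\fil\circ\gamma_\chi=\phi$ into the two conditions (a) $\chi$ factors through $P(\phi)$, and (b) $\gr\circ\chi=\chi(\gr\circ\phi)$. If $\fil\circ\gamma_\chi=\phi$, meaning $\omega(X)^{\phi\leq\lambda}=\bigoplus_{\mu\leq\lambda}\omega(X)^\mu_\chi$ for every $X$ and every $\lambda$, then each filtration step is visibly $\chi$-stable, so (a) holds, and the natural isomorphism $\gr^\mu\phi(X)\cong\omega(X)^\mu_\chi$ exhibits $\gr^\mu\phi(X)$ as being of pure $\chi$-weight $\mu$, so (b) holds.

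Conversely, assume (a) and (b). Since $\omega(X)^{\phi\leq\lambda}$ is $\chi$-stable by (a), it decomposes as $\bigoplus_\mu(\omega(X)^{\phi\leq\lambda}\cap\omega(X)^\mu_\chi)$, so it suffices to show $\omega(X)^{\phi\leq\lambda}\cap\omega(X)^\mu_\chi=0$ for $\mu>\lambda$. Working locally on $S$, the submodule $\omega(X)^{\phi\leq\lambda}$ inherits a finite $\phi$-filtration whose graded pieces are $\gr^\nu\phi(X)$ for $\nu\leq\lambda$, each of pure $\chi$-weight $\nu\leq\lambda<\mu$ by (b). Since the weight-space decomposition for $D^\Gamma$-representations on locally free modules of finite rank is exact, an induction on the length of this filtration shows that the $\chi$-weight-$\mu$ subspace of $\omega(X)^{\phi\leq\lambda}$ vanishes, as required.

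The main obstacle is this converse direction: the hypotheses (a) and (b) are purely group-scheme-theoretic, and one must verify that together they nevertheless enforce the sharp equality $\omega(X)^{\phi\leq\lambda}=\bigoplus_{\mu\leq\lambda}\omega(X)^\mu_\chi$ of submodules of $\omega(X)$. This works because dualizability ensures that all the relevant filtration steps and weight spaces are locally direct summands of a finite-rank locally free module, so the weight-space computation globalizes and the induction closes.
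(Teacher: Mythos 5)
Your overall strategy---reducing via the classification of graded fiber functors at the end of Section~3 to a statement about cocharacters of $\UAut^\otimes(\omega)$, and then checking compatibility with the filtration weight space by weight space---is exactly the intended one: the paper itself only says the lemma ``is proved in exactly the same way'' as \cite[Lemma 4.10]{FFF}. Your forward direction is correct, and so is the setup of the converse.

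There is, however, a genuine gap in the converse. From the $\chi$-stability of $\omega(X)^{\phi\leq\lambda}$ together with the vanishing $\omega(X)^{\phi\leq\lambda}\cap\omega(X)^\mu_\chi=0$ for $\mu>\lambda$ you only obtain the inclusion $\omega(X)^{\phi\leq\lambda}\subseteq\bigoplus_{\mu\leq\lambda}\omega(X)^\mu_\chi$, not the required equality, so the reduction ``it suffices to show\dots'' is unjustified. Indeed, a filtration obtained by shifting $\bigoplus_{\mu\leq\lambda}\omega(X)^\mu_\chi$ down by one step is also $\chi$-stable and also has vanishing intersections with the weight spaces of weight $\mu>\lambda$, so these two facts alone cannot pin down the filtration. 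You must additionally prove the reverse inclusion $\omega(X)^\mu_\chi\subseteq\omega(X)^{\phi\leq\mu}$ for every $\mu$. This does follow from (a) and (b) by the same technique you already use: the quotient $\omega(X)/\omega(X)^{\phi\leq\mu}$ carries a finite $\chi$-stable filtration whose graded pieces $\gr^\nu\phi(X)$, $\nu>\mu$, have pure $\chi$-weight $\nu>\mu$ by (b); exactness of the weight-space functors then shows that the weight-$\mu$ part of this quotient vanishes, so $\omega(X)^\mu_\chi$ maps to zero there and hence lies in $\omega(X)^{\phi\leq\mu}$. (Alternatively, a rank count combined with the fact that all submodules in question are local direct summands of the finite-rank locally free module $\omega(X)$ closes the gap.) With this addition the argument is complete.
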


\begin{definition}
  We call a cocharacter $\chi$ as in Lemma \ref{SplittingCochar} a \emph{cocharacter which splits} $\phi$.
\end{definition}

Lemma \ref{SplittingCochar} gives a natural left action of $U(\phi)$ on $\USpl(\phi)$: Any conjugate of a cocharacter which splits $\phi$ by a section of $U(\phi)$ again splits $\phi$.

\begin{lemma}[{\cite[IV.2.2.1]{SaavedraRivano}}]  \label{SplPseudotorsor}
  This action turns $\USpl(\phi)$ into a $U(\phi)$-pseudotorsor, in the sense that for each scheme $S'$ over $S$, the group $U(\phi)(S')$ acts simply transitively on $\USpl(\phi)(S')$. 
\end{lemma}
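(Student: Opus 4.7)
The plan is to prove the pseudotorsor property directly using the tensor-functor description of splittings, rather than the cocharacter reformulation from Lemma~\ref{SplittingCochar}. For notational simplicity I argue for $S'=S$; since all constructions below are functorial in $S$, the same argument applies over any $S'$ after replacing $\phi$ by $\phi_{S'}$. Given two splittings $\gamma_1,\gamma_2 \in \USpl(\phi)(S)$, we have $\forg\circ\gamma_i = \omega$ for both $i$, so each $\gamma_i$ equips $\omega(X)$, for every $X \in \Rep^\circ G$, with a $\Gamma$-grading $V_i^\gamma(X) \defeq \gamma_i(X)^\gamma$; both gradings refine the same filtration $F^{\leq\gamma}\omega(X)$ coming from $\phi$.

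For each $X$, define $u_X \colon \omega(X) \to \omega(X)$ to be the unique $\CO_S$-linear automorphism sending $V_1^\gamma(X)$ to $V_2^\gamma(X)$ such that the composition $V_1^\gamma(X) \to V_2^\gamma(X) \hookrightarrow F^{\leq\gamma}\omega(X) \twoheadrightarrow F^{\leq\gamma}\omega(X)/F^{<\gamma}\omega(X)$ coincides with the canonical isomorphism induced by $\gamma_1$. Equivalently, $u_X = \psi_{2,X}^{-1} \circ \psi_{1,X}$, where $\psi_{i,X}\colon \omega(X) \isoto \gr(\omega(X))$ is the isomorphism induced by the splitting $\gamma_i$. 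By construction $u_X$ preserves the filtration on $\omega(X)$ and induces the identity on the associated graded. Naturality of $u = (u_X)_X$ in $X$ and its compatibility with tensor products follow from the corresponding properties of $\gamma_1$ and $\gamma_2$ together with the tensor-functoriality of $\gr$. Hence $u$ defines an element of $U(\phi)(S)$, and $u \cdot \gamma_1 = \gamma_2$ by construction, establishing transitivity.

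For freeness, suppose $u \in U(\phi)(S)$ fixes $\gamma_1$. Then each $u_X$ preserves every $V_1^\gamma(X)$. Since $u$ induces the identity on $\gr\circ\phi$ and the quotient map $F^{\leq\gamma}\omega(X) \twoheadrightarrow F^{\leq\gamma}\omega(X)/F^{<\gamma}\omega(X)$ restricts to an isomorphism on $V_1^\gamma(X)$, the endomorphism $u_X|_{V_1^\gamma(X)}$ must be the identity for every $\gamma$. Since $\omega(X) = \bigoplus_\gamma V_1^\gamma(X)$, this forces $u_X = \id$ for every $X$, and hence $u = 1$.

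The main technical point requiring care is the verification that the object-wise construction $X \mapsto u_X$ really is natural in $X$ and compatible with tensor products; this is routine given the tensor-functoriality of the splittings $\gamma_1, \gamma_2$ and of $\gr$, but it is the single step where the universal description of $u$ has to be unpacked explicitly, in particular via the identification $V_i^{\gamma_1}(X) \otimes V_i^{\gamma_2}(Y) \subset V_i^{\gamma_1+\gamma_2}(X\otimes Y)$ provided by the tensor structure on $\gamma_i$. Once this compatibility is settled, the whole argument carries over unchanged after any base change $S' \to S$, yielding the $U(\phi)(S')$-pseudotorsor structure on $\USpl(\phi)(S')$ asserted by the lemma.
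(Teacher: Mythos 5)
Your argument is correct, and it is essentially the standard one: the paper does not prove this lemma itself but cites \cite[IV.2.2.1]{SaavedraRivano}, and your direct verification is in substance the argument given there. The key points all check out: with $\psi_{i,X}$ the canonical isomorphism $\omega(X)\isoto \forg(\gr(\phi(X)))$ attached to the splitting $\gamma_i$ (obtained by whiskering the canonical tensor isomorphism $\forg \Rightarrow \forg\circ\gr\circ\fil$ on $\Gr(S)$ with $\gamma_i$), the composite $u=\psi_2^{-1}\circ\psi_1$ is automatically a natural tensor automorphism of $\omega$, preserves the filtration, and induces the identity on $\gr\circ\phi$, so it lies in $U(\phi)(S')$; and freeness follows exactly as you say. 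The one point you should make explicit is that your action (transporting the grading, $(u\cdot\gamma_1)^\gamma(X)=u_X(V_1^\gamma(X))$) agrees with the action the paper actually defines, namely conjugation of the associated cocharacter from Lemma \ref{SplittingCochar}: this is immediate since $u\chi_1 u^{-1}$ has weight space $u_X(V_1^\gamma(X))$ in weight $\gamma$, but without this remark you are proving simple transitivity for a possibly different action than the one in the statement. Apart from that (and the small notational slip $\gr(\omega(X))$ for $\gr(\phi(X))$), the proof is complete; compared with the cocharacter formulation, your route has the mild advantage of not invoking Lemma \ref{SplittingCochar} or the representability of $P(\phi)$ at all.
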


The following is our main theorem:
\begin{theorem} \label{MainThm}
  Let $G$ be a smooth affine group scheme over $R$ which has enough dualizable representations. Let $\phi$ be filtered fiber functor on $\Rep^\circ G$ over some $R$-scheme $S$. Then $\phi$ is splittable on every affine open subset of $S$.
\end{theorem}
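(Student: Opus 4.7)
The strategy is to reduce to the case of a field base via Proposition \ref{PhiBC}, where the result was proven in \cite{FFF}, and then lift splittings globally using the pseudotorsor structure from Lemma \ref{SplPseudotorsor}. Since splittability is Zariski-local on $S$, we may assume $S = \Spec A$ is affine and aim to show that $\USpl(\phi)(S) \neq \emptyset$.

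The core step is a structural analysis of $U(\phi)$. Using Proposition \ref{TannakaLie}, the isomorphism $\Lie G(\phi) \cong \UEnd^\otimes(\omega)$ carries the filtration on $\omega$ coming from $\phi$ to a natural $\Gamma$-filtration on $\Lie G(\phi)$ whose degree $\leq 0$ part is $\Lie P(\phi)$ and whose strictly negative part is $\Lie U(\phi)$. Applying the field case to each fiber via Proposition \ref{PhiBC}(iii) and exploiting local freeness of the graded pieces (which holds because $\phi$ takes values in dualizable objects of $\Fil(S)$, so the filtration steps on $\omega(X)$ for $X \in \Rep^\circ G$ are locally direct summands), one shows that $P(\phi)$ and $U(\phi)$ are smooth affine $S$-group schemes, with $U(\phi)$ having geometrically connected unipotent fibers of constant dimension.

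Next I would construct a finite filtration $U(\phi) = U_0 \supset U_1 \supset \cdots \supset U_n = 1$ by smooth closed normal $S$-subgroup schemes with abelian successive quotients isomorphic to vector group schemes $V(\gr^{\gamma_i}\Lie U(\phi))$ for the finitely many $\gamma_i < 0$ in play. Each $U_i$ is defined Tannakically as the subfunctor of $U(\phi)$ consisting of automorphisms that shift the filtration $F^{\bullet}\omega$ by at least $i$ in the ordering of $\Gamma$; over a field this recovers the weight filtration of $U(\phi)$ used in \cite{FFF}, and over $S$ the construction descends from the locally free graded pieces of $\Lie U(\phi)$ using Proposition \ref{PhiBC}(iii). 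Since $H^1_{\mathrm{Zar}}(\Spec A, V(\CE)) = 0$ for any locally free $\CE$, a non-abelian dévissage along this filtration (whose commutator relations $[U_i, U_j] \subset U_{i+j}$ ensure that each step is central in the quotient below it) shows that every étale-locally trivial $U(\phi)$-torsor on $\Spec A$ is trivial.

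Finally I would verify that $\USpl(\phi)$ is an honest $U(\phi)$-torsor, not merely a pseudotorsor. By Theorem \ref{FpqcIso}, the underlying fiber functor $\omega$ is fpqc-locally trivializable, and the field case \cite{FFF} applied via Proposition \ref{PhiBC} makes every geometric fiber of $\USpl(\phi) \to S$ non-empty. Combined with the smoothness of $\USpl(\phi)$ (inherited from $U(\phi)$ via the pseudotorsor structure), this yields sections étale-locally on $S$. Together with the $H^1$-vanishing from the previous step, this forces $\USpl(\phi)(\Spec A) \neq \emptyset$, producing the desired splitting. The main technical obstacle is the structural analysis in the middle two paragraphs: verifying that the field-case structure of $U(\phi)$ together with its weight filtration globalizes over the base, in particular that the graded pieces of $\Lie G(\phi)$ are locally free on $S$ and that the Tannakian-defined $U_i$ are smooth closed normal subgroup schemes realizing these graded pieces as their vector-group subquotients.
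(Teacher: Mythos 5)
Your overall architecture coincides with the paper's: reduce to the field case of \cite{FFF} via the base-change machinery of Proposition \ref{PhiBC}, establish \'etale-local splittability, filter $U(\phi)$ by the subgroups of automorphisms shifting the filtration by at least a given amount (the paper's $U_{\leq\alpha}(\phi)$), identify the graded quotients with vector group schemes via a logarithm into $\gr_\alpha\Lie(\UAut^\otimes(\forg\circ\gr\circ\phi))$, and conclude by triviality of torsors under iterated extensions of vector groups over an affine base. However, your justification of \'etale-local splittability is circular as stated. You propose to deduce \'etale-local sections of $\USpl(\phi)$ from the non-emptiness of its geometric fibers together with ``smoothness of $\USpl(\phi)$ inherited from $U(\phi)$ via the pseudotorsor structure.'' A pseudotorsor under a smooth group scheme does not inherit smoothness (in the relevant sense of infinitesimal lifting of sections, or of being a smooth surjective scheme over the base) until one already knows it is locally non-empty: the simply transitive action only transports sections where sections exist, so it gives no handle on lifting a section from a fiber or from a nilpotent thickening to a neighborhood. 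This is precisely the point you are trying to prove, so the argument begs the question.

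The paper closes this gap with a concrete construction (Lemma \ref{ExtendSplitting}): a splitting of $\phi_s$ over a closed point $s$ is encoded by a cocharacter of $P(\phi)_s$, and since $\UAut^\otimes(\omega)$ is smooth (by Theorem \ref{FpqcIso} and smoothness of $G$), the cocharacter lifts to an \'etale neighborhood by \cite[Th\'eor\`eme XI.5.8]{SGA3II}. One must then check that the lifted cocharacter still splits $\phi$, which is done by showing that the direct sum decompositions $\omega'(X)=\omega'(X)^{\phi\leq\gamma}\oplus\omega'(X)^{\chi>\gamma}$ propagate from the special fiber by a Nakayama argument (Lemma \ref{SumLift}), using that the filtration steps are local direct summands. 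Some version of this two-step lifting argument is unavoidable, and your sketch does not contain it. The remaining steps of your proposal are sound and match the paper, though you should note that the finiteness of the set of relevant weights $\gamma_i$ (needed for your d\'evissage to terminate) requires an argument such as the paper's Lemma \ref{UADiscr}, and that the normality of each step in the filtration of $U(\phi)$ follows from its description as the kernel of the logarithm homomorphism rather than from commutator estimates.
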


The proof of this result will be given in several steps. Clearly the claim for affine $S$ implies the claim for general $S$. Hence we assume from now on that $S=\Spec(R')$ for an $R$-algebra $R'$.

\subsubsection*{The case where $R$ is a field}
First we consider the case where $R$ is a field:

In case $\Gamma=\BZ$, the following is given by \cite[Theorem 4.15]{FFF}. However, the proof given in \emph{loc.cit.} also works for arbitrary index groups $\Gamma$.

\begin{theorem} \label{MTCase1}
  Assume that $R$ is a field and that $\phi$ is a filtered fiber functor on $\Rep^\circ G$ over $S$. Then $\phi$ is splittable. %In case $G$ is smooth and $S$ is affine, the filtered fiber functor $\phi$ is splittable.
\end{theorem}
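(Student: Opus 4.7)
The plan is to exploit Lemmas \ref{SplittingCochar} and \ref{SplPseudotorsor}: producing a splitting of $\phi$ is equivalent to lifting the cocharacter $\chi(\gr\circ\phi)\colon D^\Gamma_S \to L(\phi)$ along the morphism $P(\phi) \to L(\phi)$, and the resulting sheaf $\USpl(\phi)$ is a $U(\phi)$-pseudotorsor on $S$. Thus the theorem reduces to two points: first, that $\USpl(\phi)$ is a genuine torsor, i.e.\ has sections on an fpqc cover of $S$; second, that every $U(\phi)$-torsor on the affine base $S=\Spec R'$ is trivial.

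For local nonemptiness, I would appeal to Theorem \ref{FpqcIso}, which is available because $G$ is smooth over the field $R$ and has enough dualizable representations. After an fpqc pullback, the underlying fiber functor $\omega = \forg\circ\phi$ becomes the standard functor $\omega_G \otimes_R \CO_S$, so $\phi$ corresponds to a $\Gamma$-filtration of this standard functor. Such filtrations are classified by morphisms from $S$ to a disjoint union of partial flag schemes of the form $G/P_\nu$, one for each combinatorial type $\nu$; over each component a cocharacter splitting of $\phi$ is constructed, after a further Zariski localization, from the standard cocharacter into a Levi sitting inside $P_\nu$.

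For the torsor triviality, I would analyze the structure of $U(\phi)$ via Proposition \ref{TannakaLie}. The filtration given by $\phi$ induces a filtration on $\UEnd^\otimes(\omega)$ by degree, and one identifies $\ULie(P(\phi))=\UEnd^\otimes(\omega)^{F\leq 0}$ and $\ULie(U(\phi))=\UEnd^\otimes(\omega)^{F<0}$; the graded pieces of the latter are canonically isomorphic to the strictly negative-degree components of $\UEnd^\otimes(\gr\circ\omega)$ and in particular are flat quasi-coherent $\CO_S$-modules. Smoothness of $G$ over the field $R$ passes to $U(\phi)$, and conjugation by $\chi(\gr\circ\phi)$ upgrades the filtration on $\ULie(U(\phi))$ to a $\Gamma$-grading, which integrates via the exponential to an expression of $U(\phi)$ as a successive extension of vector group schemes over $S$. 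Since $H^1_{\mathrm{fpqc}}(S,V)=0$ for every quasi-coherent vector group $V$ on the affine scheme $S$, an induction along this filtration yields $H^1_{\mathrm{fpqc}}(S,U(\phi))=0$, and hence $\USpl(\phi)(S)\neq\emptyset$.

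The main obstacle will be the structural analysis of $U(\phi)$: the Lie algebra filtration is essentially formal from Proposition \ref{TannakaLie}, but integrating it to a compatible filtration of $U(\phi)$ itself by closed normal subgroup schemes whose successive quotients are vector groups requires careful use of the smoothness and unipotence of $U(\phi)$ together with the Tannakian identification of the graded pieces with $\UEnd^\otimes(\gr\circ\omega)$. Once these structural statements are in hand, the cohomology-vanishing argument, and the reduction of local nonemptiness to the flag-variety picture, are routine.
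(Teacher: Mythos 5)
Your overall architecture --- reduce via Lemmas \ref{SplittingCochar} and \ref{SplPseudotorsor} to showing (a) that $\USpl(\phi)$ is fpqc-locally nonempty and (b) that $U(\phi)$ is an iterated extension of vector groups, so that torsors under it over affine $S$ are trivial --- is exactly the skeleton of the proof the paper points to (the paper itself only cites \cite[Theorem 4.15]{FFF} and remarks that the argument there works for general $\Gamma$). However, the two load-bearing steps of your sketch are not actually carried out, and the first one is where the entire content of the theorem lives.

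For step (a), after trivializing $\omega$ fpqc-locally you assert that $\Gamma$-filtrations of $\omega_G\otimes_R\CO_S$ ``are classified by morphisms from $S$ to a disjoint union of partial flag schemes $G/P_\nu$,'' and that a splitting cocharacter then comes from a Levi inside $P_\nu$. This is circular: the statement that every exact tensor filtration of the standard fiber functor is a point of such a scheme of parabolic-type filtrations \emph{is} the local splittability you are trying to prove, not something you can quote. Moreover the picture of partial flag varieties and Levi subgroups only makes sense for reductive $G$, whereas the theorem is for an arbitrary (smooth) affine group scheme over the field $R$; for non-reductive $G$ there is no a priori supply of cocharacters inducing a given filtration, and producing one over a geometric point is the hard combinatorial core of \cite[Theorem 4.15]{FFF} (and of Deligne's argument in \cite[IV.2.4]{SaavedraRivano}). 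Your proposal contains no argument for this.

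For step (b), your derivation of the grading on $\Lie(U(\phi))$ by ``conjugation by $\chi(\gr\circ\phi)$'' is also circular: $\chi(\gr\circ\phi)$ is a cocharacter of $L(\phi)$, and conjugating subgroups of $P(\phi)$ by it presupposes a lift $D^\Gamma_S\to P(\phi)$, i.e.\ a splitting. The non-circular route (the one the paper uses in Theorem \ref{GrADesc}) is to map $U_{\leq\alpha}(\phi)/U_{<\alpha}(\phi)$ via $g\mapsto g-\operatorname{id}$ into the degree-$\alpha$ part of $\Lie(\UAut^\otimes(\forg\circ\gr\circ\phi))$, graded by $\gr\circ\phi$ itself, and to prove this is an isomorphism by a dimension count as in \cite[IV 2.1.4.1]{SaavedraRivano}; also note that ``exponential'' integration is problematic in positive characteristic, and that smoothness of the closed subgroup $U(\phi)$ does not follow formally from smoothness of $G$. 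Once (a) and (b) are granted, your cohomology-vanishing conclusion is correct, but as it stands the proposal assumes the theorem's essential content rather than proving it.
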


\subsubsection*{\'Etale-local splittability}
Next we show that in the situation of Theorem \ref{MainThm}, the filtered fiber functor $\phi$ is splittable \'etale-locally on $S$. For this we will need the following results:

\begin{lemma} \label{SumLift}
  Let $M$ be a finitely generated module over a local ring $R$ with residue field $k$. Let $U$ and $V$ be two local direct summands of $M$. If the special fibers $U_k$ and $V_k$ give a direct sum decomposition $M_k=U_k \oplus V_k$, then $M=U\oplus V$.
\end{lemma}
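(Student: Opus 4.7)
The plan is to reduce the claim to the determinantal form of Nakayama's lemma for maps between finitely generated free modules over a local ring.

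First I would observe that because $R$ is local, Zariski-locally on $\Spec(R)$ always means over all of $R$, so each of $U$ and $V$ is in fact a direct summand of $M$. Pick a complement $U' \subset M$ of $U$, so that $M = U \oplus U'$, and denote by $p \colon M \to U'$ the corresponding projection. Since $U$, $U'$ and $V$ are direct summands of the finitely generated module $M$, each of them is finitely generated and projective, hence free of finite rank (as $R$ is local).

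The key step is to show that the composition $p|_V \colon V \hookrightarrow M \twoheadrightarrow U'$ is an isomorphism. After reduction modulo the maximal ideal $\mathfrak{m}$, the hypothesis $M_k = U_k \oplus V_k$ together with the decomposition $M_k = U_k \oplus U'_k$ forces $(p|_V)_k$ to be a $k$-linear isomorphism, so in particular $V$ and $U'$ have the same rank. Choosing bases, $p|_V$ is represented by a square matrix over $R$ whose reduction modulo $\mathfrak{m}$ is invertible; its determinant is therefore a unit in $R$ and $p|_V$ is an isomorphism.

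With this in hand I would conclude directly that the natural map $\alpha \colon U \oplus V \to M$, $(u,v) \mapsto u+v$, is an isomorphism. For injectivity, $\alpha(u,v) = 0$ gives $p(v) = 0$, hence $v = 0$ by injectivity of $p|_V$, and then $u = 0$. For surjectivity, write $m = u_0 + u'_0$ with $u_0 \in U$ and $u'_0 \in U'$, choose $v \in V$ with $p(v) = u'_0$ by surjectivity of $p|_V$, and note that $m - v \in \ker p = U$, so $m \in U + V$. The argument is essentially routine bookkeeping once the isomorphism $p|_V$ has been established, and I do not anticipate any substantive obstacle beyond this single Nakayama-style input.
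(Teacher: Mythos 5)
Your overall strategy --- choose a complement $U'$ of $U$, show that the projection $p|_V\colon V \to U'$ is an isomorphism by a determinant/Nakayama argument, and then do the bookkeeping --- is sound and is genuinely different from the paper's proof, which instead considers the addition map $f\colon U\oplus V\to M$ together with a map $g=(\pi,\pi')\colon M\to U\oplus V$ built from two projections and invokes the fact that a surjective endomorphism of a finitely generated module is an isomorphism. However, there is one assertion in your write-up that is false as stated and that your argument genuinely depends on: a direct summand of a finitely generated module over a local ring is finitely generated, but it is \emph{not} in general projective (hence not free); that conclusion requires $M$ itself to be projective. Your determinant argument needs $V$ and $U'$ to be free of the same finite rank, so without projectivity of $M$ the key step collapses.

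This is not merely a presentational issue, because the lemma as literally stated (for an arbitrary finitely generated $M$) is in fact false. Take $R=\BZ/p^2$, $M=\BZ/p^2\oplus\BZ/p$, $U=\BZ/p^2\oplus 0$ and $V=\{(c,\,c\bmod p): c\in \BZ/p^2\}$. Both $U$ and $V$ are direct summands of $M$ (each has $0\oplus\BZ/p$ as a complement), and $M_k=U_k\oplus V_k$ holds in $M/pM\cong k^2$ (there $U_k$ is the first factor and $V_k$ the diagonal), yet $U\cap V=p\BZ/p^2\oplus 0\neq 0$. So a hypothesis such as ``$M$ finitely generated projective'' (equivalently finite free, as $R$ is local) must be added --- and this is exactly the situation in which the lemma is applied in the proof of Lemma \ref{ExtendSplitting}, where $M=\omega'(X)$ is finite projective because $X$ is dualizable. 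Under that hypothesis your proof is correct: $U$, $U'$ and $V$ are then direct summands of a finite free module, hence finite free; the reduction $(p|_V)_k$ is an isomorphism because its kernel is $V_k\cap U_k=0$ and $V_k+U_k=M_k$; and the determinant trick applies. So the gap is localized in the sentence justifying freeness: state and use the projectivity of $M$, and your argument goes through, arguably more explicitly than the one in the paper.
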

\begin{proof}
  Since $R$ is local, the submodules $U$ and $V$ are direct summands of $M$. So there exist projections $\pi\colon M \to U$ and $\pi'\colon M \to V$. We consider the addition map $f\colon U \oplus V \to M$ and the map $g=(\pi,\pi') \colon M \to U\oplus V$. By our assumption, the special fibers of both $f$ and $g$ are surjective. Hence by Nakayama's lemma, the two compositions $f\circ g$ and $g \circ f$ are surjective. By \cite[Tag 05G8]{stacks-project}, any surjective endomorphism of a finitely generated module is an isomorphism. So $f\circ g$ and $g\circ f$ are isomorphisms, which implies that $f$ and $g$ are isomorphisms.
\end{proof}

The following result, as well as its proof, is essentially a repetition of a part of the proof of \cite[Theorem 5.8]{FFF}.

 \begin{lemma} \label{ExtendSplitting}
   Let $\phi$ be a filtered fiber functor on $\Rep^\circ G$ over $S$. Let $s \in S$  be a point. If the fiber $\phi_s$ is splittable, then there exists an \'etale neighborhood $S' \to S$ of $s$ such that $\phi_{S'}$ is splittable.
 \end{lemma}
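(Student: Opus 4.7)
The plan is to show that $\USpl(\phi)\to S$ is smooth in a neighborhood of the given point of $\USpl(\phi)(k(s))$, whence the standard smoothness criterion provides an étale-local section which is the desired splitting.

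By Lemma \ref{SplittingCochar}, the functor $\USpl(\phi)$ is representable by the closed $S$-subscheme of $\UHom(D^\Gamma_S, P(\phi))$ cut out by the condition that composition with $\gr\colon P(\phi)\to L(\phi)$ equals $\chi(\gr\circ\phi)$; this is affine over $S$ by Proposition \ref{PhiAutRepr}. Lemma \ref{SplPseudotorsor} presents it as a $U(\phi)$-pseudotorsor, so smoothness of $\USpl(\phi)\to S$ at the given section over $s$ would reduce to smoothness of $U(\phi)\to S$.

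For the smoothness of $U(\phi)\to S$, I would use Proposition \ref{TannakaLie} to identify $\Lie U(\phi)$ inside $\UEnd^\otimes(\omega)$ as the $\CO_S$-submodule of tensor derivations $h$ with $h(\omega(X)^{\phi\leq\gamma})\subset\omega(X)^{\phi<\gamma}$ for every $X\in\Rep^\circ G$ and every $\gamma\in\Gamma$. Since the dualizability of objects of $\Fil(S)$ coming from $\Rep^\circ G$ forces each $\omega(X)^{\phi\leq\gamma}$ to be a local direct summand of $\omega(X)$, this submodule is a locally free $\CO_S$-module of finite rank. The crux then, following the argument in the proof of \cite[Theorem 5.8]{FFF}, is to exhibit a natural isomorphism of $S$-schemes between the vector bundle $\Lie U(\phi)$ and $U(\phi)$; the fact that filtration-decreasing tensor endomorphisms are nilpotent on each $\omega(X)$ with nilpotency degree bounded by the length of the filtration makes a group-scheme exponential-type construction possible. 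Lemma \ref{SumLift} enters here to promote fiberwise direct-sum decompositions of the underlying modules to honest direct-sum decompositions of $\CO_S$-modules.

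The main obstacle is this last step, the identification of $U(\phi)$ with the affine bundle $\Lie U(\phi)$ over a general base $S$, which is essentially a repetition of the corresponding arguments in the proof of \cite[Theorem 5.8]{FFF}. Once smoothness of $U(\phi)$ is in hand, the remainder is formal: the $U(\phi)$-pseudotorsor $\USpl(\phi)$ is smooth at any point where it admits a section, the hypothesis supplies such a section over $s$, and smoothness then yields an étale neighborhood $S'\to S$ of $s$ on which the section extends.
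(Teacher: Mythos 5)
The central step of your argument --- that smoothness of $\USpl(\phi)\to S$ at the given point ``reduces to'' smoothness of $U(\phi)\to S$ via the pseudotorsor structure --- does not work, and this is exactly where the content of the lemma lies. A pseudotorsor under a smooth affine group scheme need not be smooth, or even flat, over the base: over $S=\mathbb{A}^1_k$, the restriction of $\mathbb{G}_a$ to the origin, extended by the empty functor elsewhere, is a $\mathbb{G}_a$-pseudotorsor with a point over the origin but with no sections over any neighborhood of it. The pseudotorsor structure only controls the nonempty fibers of $\USpl(\phi)$; smoothness of $\USpl(\phi)\to S$ requires in addition flatness and local surjectivity, and local surjectivity (existence of sections after an fppf or \'etale cover) is precisely the statement of Theorem \ref{ELSplittable}, which this lemma is used to prove. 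In the paper, both the upgrade of $\USpl(\phi)$ from a pseudotorsor to an honest torsor and the smoothness of $U(\phi)$ (Proposition \ref{UaRepr}) are deduced \emph{from} \'etale-local splittability, so your reduction is circular as well as incomplete. Note also that $\USpl(\phi)$ is cut out of the relevant Hom-functor by \emph{closed} conditions (landing in $P(\phi)$, inducing $\chi(\gr\circ\phi)$ on $L(\phi)$), so no smoothness of an ambient Hom-scheme passes to it.

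The paper's proof therefore does not work inside $\USpl(\phi)$ at all. It first extends the splitting cocharacter $\chi_s$ of $P(\phi)_s$ to a cocharacter $\chi$ of the ambient smooth group $\UAut^\otimes(\omega)$ over an \'etale neighborhood, using \cite[Th\'eor\`eme XI.5.8]{SGA3II}; this is where the unobstructedness of deformations of homomorphisms out of a group of multiplicative type is invoked, and it is available only for the full Hom-functor into $\UAut^\otimes(\omega)$, not for the closed subfunctor of splittings. One must then check a posteriori that $\chi$ still splits $\phi$ over the local ring at the chosen point: this is the Nakayama-type argument of Lemma \ref{SumLift}, showing that $\omega'(X)^{\phi\leq\gamma}$ and $\omega'(X)^{\chi>\gamma}$ remain complementary, so that the two filtrations are opposite and their intersections define the desired grading, which finally spreads out because $P(\phi)$ is of finite type. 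You correctly anticipated that Lemma \ref{SumLift} and the dualizability of the $\phi(X)$ would enter, but they enter in verifying that an extended cocharacter of the big group is still a splitting, not in building an exponential for $U(\phi)$.
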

 \begin{proof} 
Let $\chi_s$ be a cocharacter of $P(\phi)_s$ which splits $\phi_s$. By Theorem \ref{FpqcIso}, the group schemes $G_S$ and $\UAut^\otimes(\omega)$ are isomorphic fpqc-locally on $S$. Hence since $G$ is smooth so is $\UAut^\otimes(\omega)$. By \cite[Th\'eor\`eme XI.5.8]{SGA3II} this implies that there exist an \'etale morphism $S'\to S$, a point $s'\in S'$ over $s$ with trivial residue field extension $k(s')/k(s)$ and a cocharacter $\chi$ of $\UAut^\otimes(\omega)$ over $S'$ whose fiber in $s'$ is equal to $\chi_s$. 

We show that $\phi_{\CO_{S',s'}}$ is splittable: Let $\omega'$ be the fiber functor $(\forg\circ \phi)_{\CO_{S',s'}}$ on $\Rep^\circ G$ over $\CO_{S',s'}$. Then for $X \in \Rep^\circ G$ the filtered fiber functor $\phi$ gives a filtration $(\omega'(X)^{\phi\leq\gamma})_{\gamma\in\Gamma}$ on $\omega'(X)$. We write the grading of $\omega'(X)$ defined by $\chi$ as $\omega'(X)=\oplus_{\gamma\in\Gamma}\gr^\gamma_\chi(\omega'(X))$. This grading induces an decreasing filtration on $\omega'(X)$ by the submodules $\omega'(X)^{\chi\geq\gamma} \defeq \oplus_{\gamma'\geq \gamma}\gr^{\gamma'}_\chi(\omega'(X))$ for $\gamma \in \Gamma$.

Now for $X \in \Rep^\circ G$ and $\gamma\in \Gamma$ we claim that the submodules $\omega'(X)^{\phi\leq\gamma}$ and $\omega'(X)^{\chi>\gamma}\defeq \oplus_{\gamma'> \gamma}\gr^{\gamma'}_\chi(\omega'(X))$ of $\omega'(X)$ give a direct sum decomposition
\begin{equation} \label{PfDS}
  \omega'(X)=\omega'(X)^{\phi\leq\gamma} \oplus \omega'(X)^{\chi>\gamma} .
\end{equation}
Since by construction the cocharacter $\chi$ splits $\phi$ in $s'$, this is the case in the fiber above $s'$. Since $X \in \Rep^\circ G$ is dualizable, so is $\phi'(X)$, and hence $\omega'(X)^{\phi\leq\gamma}$ locally is a direct summand of $\omega'(X)$. So is $\omega'(X)^{\chi>\gamma}$ by construction. Hence Lemma \ref{SumLift} show that the decomposition \eqref{PfDS} exists.

As in \cite[Lemma 4.3]{FFF}, the existence of these direct sum decompositions for all $X$ and $\gamma$ shows that the two filtrations $(\omega'(X)^{\phi\leq\gamma})_{\gamma\in \Gamma}$ and $(\omega'(X)^{\chi\geq \gamma})_{\gamma\in\Gamma}$ are opposite in the sense that $$\gr^\gamma\omega'(X)\defeq \omega'(X)^{\phi\leq\gamma} \cap \omega'(X)^{\chi\geq\gamma}$$ defines a grading of $\omega'(X)$ which splits both filtrations. Since this grading is functorial in $X$ and compatible with tensor products we have constructed a graded fiber functor $\gamma'$ on $\Rep G$ over $\CO_{S',s'}$ which splits $\phi_{\CO_{S',s'}}$.

Since $P(\phi)$ is of finite type over $S$ by Proposition \ref{PhiAutRepr}, the homomorphism $D^\Gamma_{\CO_{S',s'}} \to P(\phi)_{\CO_{S',s'}}$ corresponding to this splitting under Lemma \ref{SplittingCochar} extends to to a homomorphism $D^\Gamma_U \to P(\phi)_U$ for some neighborhoud $U$ of $s'$ in $S'$. It follows from Lemma \ref{SplittingCochar} that after potentially shrinking $U$ this extension splits $\phi$ over $U$. This gives the desired \'etale neighborhood of $s$.
\end{proof}

Using this we prove:

\begin{theorem} \label{ELSplittable}
  In the situation of Theorem \ref{MainThm}, the filtered fiber functor $\phi$ is splittable \'etale-locally on $S$.
\end{theorem}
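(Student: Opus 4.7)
The plan is to combine Lemma \ref{ExtendSplitting} with the field case Theorem \ref{MTCase1}, bridging the two by the base-change machinery of Proposition \ref{PhiBC}. By Lemma \ref{ExtendSplitting}, it suffices to show that for every point $s \in S$ the fiber $\phi_s$, a filtered fiber functor on $\Rep^\circ G$ over $\Spec(k(s))$, is splittable: an étale neighborhood of $s$ on which $\phi$ splits then follows automatically. Thus the task reduces to constructing a splitting fiberwise.

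Fix $s \in S$ with residue field $k(s)$ and consider the ring homomorphisms $R \to k(s) \to k(s)$. By Proposition \ref{PhiBC}(i) applied to $\phi_s$, there exists, unique up to unique isomorphism, a filtered fiber functor $\phi_s^{k(s)}$ on $\Rep^\circ G_{k(s)}$ over $\Spec(k(s))$ whose precomposition with the base-change functor $\Rep^\circ G \to \Rep^\circ G_{k(s)}$ recovers $\phi_s$. Since $k(s)$ is a field, the affine group scheme $G_{k(s)}$ automatically has enough dualizable representations (every comodule over a Hopf algebra over a field is a filtered colimit of finite-dimensional subcomodules). Hence Theorem \ref{MTCase1} applies to $\phi_s^{k(s)}$ and yields a splitting.

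It remains to transport this splitting back to $\phi_s$. By Lemma \ref{SplittingCochar}, a splitting of $\phi_s^{k(s)}$ is the same datum as a cocharacter $\chi \colon D^\Gamma_{k(s)} \to P(\phi_s^{k(s)})$ whose composition with the projection to $L(\phi_s^{k(s)})$ equals $\chi(\gr \circ \phi_s^{k(s)})$. By Proposition \ref{PhiBC}(iii), the canonical maps $P(\phi_s^{k(s)}) \isoto P(\phi_s)$ and $L(\phi_s^{k(s)}) \isoto L(\phi_s)$ are isomorphisms of group schemes over $k(s)$, and by construction of $\phi_s^{k(s)}$ the graded fiber functor $\gr \circ \phi_s^{k(s)}$ corresponds under the analogous base-change statement for graded fiber functors to $\gr \circ \phi_s$, so these isomorphisms intertwine $\chi(\gr \circ \phi_s^{k(s)})$ with $\chi(\gr \circ \phi_s)$. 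Therefore $\chi$ descends to a cocharacter of $P(\phi_s)$ satisfying the hypothesis of Lemma \ref{SplittingCochar}, producing the desired splitting of $\phi_s$ and completing the reduction.

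The main obstacle is genuinely bookkeeping rather than mathematical content: one must verify that the base-change equivalence $\phi \leftrightarrow \phi^{R'}$ of Proposition \ref{PhiBC} commutes with the formation of $\gr \circ (-)$ and with the associated cocharacters $\chi(\gr \circ -)$ used to characterize splittings. This amounts to repeating the arguments of Lemma \ref{PhiExt} and Proposition \ref{PhiBC} in the graded setting — straightforward but requiring care that the uniqueness clauses in the universal properties of Propositions \ref{LexUP} and \ref{TBCUP} are invoked in parallel on both sides to identify $P$ and $L$ compatibly with the canonical cocharacters of $\Gr$.
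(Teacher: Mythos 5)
Your proposal is correct and follows essentially the same route as the paper: reduce via Lemma \ref{ExtendSplitting} to splittability of the fibers $\phi_s$, pass to $\phi_s^{k(s)}$ on $\Rep^\circ G_{k(s)}$ using Proposition \ref{PhiBC}, and apply Theorem \ref{MTCase1} over the field $k(s)$. The paper leaves the transport of the splitting back to $\phi_s$ implicit (one can simply precompose the splitting graded fiber functor with $\Rep^\circ G \to \Rep^\circ G_{k(s)}$), whereas you spell it out via cocharacters; both are fine.
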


 \begin{proof}
   Consider a closed point $s$ of $S=\Spec(R)$ and the associated ring homomorphism $R \to k(s)$. By applying Theorem \ref{MTCase1} to the filtered fiber functor $\phi^{k(s)}$ given by Proposition \ref{PhiBC}, we see that the fiber $\phi_s$ is splittable. Hence by Lemma \ref{ExtendSplitting} there exists an \'etale neighborhood of $s$ on which $\phi$ is splittable. By varying $s$ over $S$ we obtain the claim.
 \end{proof}

 \subsubsection*{Splittability}
 To obtain splittability over $S$ itself, consider in more detail the structure of the automorphism group $P(\phi)$:
 
\begin{lemma} \label{PhiExtExact}
  The extension $\tilde\phi\colon \Rep G \to \Fil(S)$ from Lemma \ref{PhiExt} is exact. 
\end{lemma}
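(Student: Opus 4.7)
The strategy is to reduce the exactness of $\tilde\phi$ to the already known exactness of $\phi$ on $\Rep^\circ G$ by writing an arbitrary short exact sequence in $\Rep G$ as a filtered colimit of short exact sequences with all three terms in $\Rep^\circ G$, and then transporting exactness across $\tilde\phi$ using its cocontinuity together with the fact that filtered colimits preserve short exact sequences in $\Fil(S)$.

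Concretely, given $0 \to X \to Y \to Z \to 0$ in $\Rep G$, I would first invoke Lemma \ref{EDConds} together with Proposition \ref{FpProps} to write $Z = \colim_i Z_i$ as a filtered colimit of objects $Z_i \in \Rep^\circ G$. Since $\Rep G$ is a Grothendieck category, pulling the sequence back along each $Z_i \to Z$ produces a short exact sequence $0 \to X \to Y_i \to Z_i \to 0$ in $\Rep G$, and $\colim_i Y_i = Y$ by commutation of filtered colimits with finite limits. Fixing $i$ and writing $Y_i = \colim_k Y_{i,k}$ with $Y_{i,k} \in \Rep^\circ G$, the finite presentability of $Z_i$ allows me, after passing to a cofinal subdiagram, to assume each $Y_{i,k} \to Z_i$ surjective. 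Let $K_{i,k}$ denote its kernel in $\Rep G$. Since $Z_i$ is projective as an $R$-module, the resulting sequence of underlying $R$-modules splits, exhibiting $K_{i,k}$ as a direct summand of the finitely generated projective $R$-module underlying $Y_{i,k}$; hence $K_{i,k} \in \Rep^\circ G$. Another appeal to AB5 yields $\colim_k K_{i,k} = X$.

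Finally, since $\phi$ is exact, each sequence $0 \to \phi(K_{i,k}) \to \phi(Y_{i,k}) \to \phi(Z_i) \to 0$ is short exact in $\Fil(S)$. Taking the filtered colimit first in $k$ and then in $i$, and identifying each such colimit with the value of $\tilde\phi$ on the corresponding colimit in $\Rep G$ via cocontinuity of $\tilde\phi$, would produce the desired short exact sequence $0 \to \tilde\phi(X) \to \tilde\phi(Y) \to \tilde\phi(Z) \to 0$ in $\Fil(S)$. The main technical point requiring care is the verification that filtered colimits in $\Fil(S)$ preserve short exact sequences: this amounts to showing that the filtration on a filtered colimit is computed componentwise, i.e.\ $F^{\leq\gamma}(\colim_k M_k) = \colim_k F^{\leq\gamma} M_k$, after which exactness at each filtration step follows from AB5 in $\Mod(S)$.
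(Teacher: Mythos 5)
Your proposal rests on the claim that every object of $\Rep G$ can be written as a \emph{filtered} colimit of objects of $\Rep^\circ G$, and this is the step that fails. Neither Lemma \ref{EDConds} nor Proposition \ref{FpProps} gives this: Lemma \ref{colims} expresses an object $Z$ as the colimit over the comma category $\Rep^\circ G \downarrow Z$, but that category is in general not filtered, since $\Rep^\circ G$ has no cokernels with which to coequalize parallel arrows. The claim is in fact false. Take $R=\BZ$ and $G$ trivial, so that $\Rep^\circ G$ is the category of finite free abelian groups and $\Rep G$ is all abelian groups (and $G$ has enough dualizable representations). If $Z=\BZ/2$ were a filtered colimit $\colim_i Z_i$ with each $Z_i$ finite free, then, $Z$ being finitely presented, $\id_Z$ would factor as $Z \to Z_i \to Z$ for some $i$; but $\Hom(\BZ/2,\BZ^n)=0$. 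Since every subsequent step of your argument (factoring maps through finite stages, computing kernels by AB5, exactness of filtered colimits in $\Fil(S)$) uses filteredness in an essential way, the argument cannot be repaired locally: what remains without filteredness is only that $\tilde\phi$, being a left Kan extension, is right exact, which is not the content of the lemma.

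The paper takes an entirely different, non-formal route, and the need for it is instructive: by Theorem \ref{ELSplittable} one may work \'etale-locally on $S$ and assume $\phi$ splittable, so that $\tilde\phi = \fil\circ\tilde\gamma$ for the extension $\tilde\gamma$ of a graded fiber functor, whose exactness is known. If you want to pursue a direct argument instead, you would need to replace ``filtered colimit of dualizable objects'' by ``presentation by direct sums of dualizable objects'' (which Lemma \ref{EDConds} does provide), but then you only recover right exactness; the left exactness genuinely uses geometric input about $\phi$.
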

\begin{proof}
  Since it suffices to prove the claim \'etale-locally on $S$, by Theorem \ref{ELSplittable} we may assume that $\phi$ is splittable. Then the claim follows from the fact that every graded fiber functor on $\Rep^\circ G$ over $S$ extends to an exact functor $\Rep G \to \Gr_S$.
\end{proof}
\begin{definition}
  Let $\phi$ be a filtered fiber functor on $\Rep^\circ G$ over $S$. For an element $\alpha \in \Gamma^{\leq 0}$, we let $U_{\leq\alpha}(\phi)$ be the subgroup functor of $P(\phi)$ consisting of those sections which act as the identity on the quotient $\omega(X)^{\phi\leq\gamma}/\omega(X)^{\phi\leq \gamma+\alpha}$ for all $X \in \Rep^\circ G$ and all $\gamma\in\Gamma$.

  We also let $U_{<\gamma}(\phi) \defeq \colim_{\gamma'<\gamma} U_{\leq\gamma}(\phi) \subset P(\phi)$.
\end{definition}

Note that in particular $U_{\leq 0}(\phi)=P(\phi)$ and $U_{<0}(\phi)=U(\phi)$.

\begin{lemma} \label{UABC}
  In the situation of Proposition \ref{PhiBC}, the isomorphism $P(\phi^{R'}) \to P(\phi)$ restricts to isomorphisms $U_{\leq\alpha}(\phi^{R'}) \to U_{\leq\alpha}(\phi)$ for all $\alpha \in \Gamma^{\leq 0}$.
\end{lemma}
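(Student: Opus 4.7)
The plan is to unwind the $U_{\leq\alpha}$-condition on $T$-points for an arbitrary $R''$-scheme $T$, and to reduce the condition on $\Rep^\circ G_{R'}$ to the condition on $\Rep^\circ G$ via the isomorphism $P(\phi^{R'}) \iso P(\phi)$ of Proposition \ref{PhiBC}(iii). Let $\psi \in P(\phi^{R'})(T)$ correspond to $\psi' \in P(\phi)(T)$. For any $X \in \Rep^\circ G$, the commutative diagram of Proposition \ref{PhiBC}(i) identifies $\phi^{R'}(X_{R'})$ with $\phi(X)$ as filtered modules, and part (ii) identifies the action of $\psi$ on the former with that of $\psi'$ on the latter. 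Hence the $U_{\leq\alpha}$-condition on $X_{R'}$ for $\psi$ is identical to the $U_{\leq\alpha}$-condition on $X$ for $\psi'$; in particular the forward implication $\psi \in U_{\leq\alpha}(\phi^{R'})(T) \Rightarrow \psi' \in U_{\leq\alpha}(\phi)(T)$ is immediate.

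For the converse I would pass from the restricted class $\{X_{R'}\}_{X \in \Rep^\circ G}$ of testing objects to all of $\Rep^\circ G_{R'}$ by a covering and cocontinuity argument. First observe that $G_{R'}$ inherits enough dualizable representations from $G$: given $Y \in \Rep G_{R'}$, restrict scalars along $R \to R'$ to obtain $Y|_R \in \Rep G$, choose a surjection $\bigoplus_i X_i \twoheadrightarrow Y|_R$ with $X_i \in \Rep^\circ G$, tensor with $R'$, and compose with the canonical surjection $Y|_R \otimes_R R' \twoheadrightarrow Y$ to produce a surjection $\bigoplus_i (X_i)_{R'} \twoheadrightarrow Y$ in $\Rep G_{R'}$. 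Applying Lemmas \ref{PhiExt} and \ref{PhiExtExact} to $\phi^{R'}$ then yields an exact cocontinuous extension $\tilde\phi^{R'} \colon \Rep G_{R'} \to \Fil(R'')$, and $\psi$ extends uniquely to a tensor automorphism $\tilde\psi$ of $\tilde\phi^{R'}$.

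Now assume $\psi' \in U_{\leq\alpha}(\phi)(T)$ and fix $Y \in \Rep^\circ G_{R'}$. Choose a surjection $\pi\colon \bigoplus_i (X_i)_{R'} \twoheadrightarrow Y$ as above. The forward step, applied to each $X_i$, gives the congruence $\tilde\psi(a) \equiv a \pmod{F^{\leq\gamma+\alpha}}$ for every $a \in F^{\leq\gamma} \tilde\phi^{R'}((X_i)_{R'})$ and every $\gamma$, and this transfers to the direct sum since tensor automorphisms of cocontinuous functors commute with direct sums. Because $\tilde\phi^{R'}$ is cocontinuous, $\tilde\phi^{R'}(\pi)$ is a cokernel in $\Fil(R'')$, so the quotient filtration identity $F^{\leq\gamma} \tilde\phi^{R'}(Y) = \tilde\phi^{R'}(\pi)\bigl(F^{\leq\gamma} \tilde\phi^{R'}(\bigoplus_i (X_i)_{R'})\bigr)$ holds; lifting elements through $\tilde\phi^{R'}(\pi)$ shows that the congruence descends to $\tilde\phi^{R'}(Y) = \phi^{R'}(Y)$, which gives $\psi \in U_{\leq\alpha}(\phi^{R'})(T)$, as desired. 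The main obstacle, and essentially the entire content of the lemma beyond Proposition \ref{PhiBC}(iii), is that $\Rep^\circ G \to \Rep^\circ G_{R'}$ is far from essentially surjective; the extension to the cocomplete category $\Rep G_{R'}$ combined with the covering by direct sums of base changes above is precisely what bridges this gap.
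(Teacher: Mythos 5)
Your proof is correct, but it runs in the opposite direction from the paper's. Both arguments hinge on Lemma \ref{PhiExtExact} (exactness of the cocontinuous extensions to $\Rep G$ and $\Rep G_{R'}$) and on the observation that the only issue is the non-essential-surjectivity of $\Rep^\circ G \to \Rep^\circ G_{R'}$; but where you cover an arbitrary $Y \in \Rep^\circ G_{R'}$ by an admissible \emph{epimorphism} from a direct sum of base-changed objects $(X_i)_{R'}$ and push the congruence down along the quotient filtration, the paper instead embeds every $X$ (resp.\ $Y$) by a \emph{monomorphism} into $\omega(X)\otimes\rho_G$ (resp.\ $\omega(Y)\otimes\rho_{G_{R'}}$) via the comodule map, thereby reducing the $U_{\leq\alpha}$-condition on both sides to a single test on the regular representation; since $\rho_{G_{R'}}=\rho_{G,R'}$, the two conditions become literally the same condition on the same filtered module, and the lemma follows in one line. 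The paper's route is shorter and avoids any covering argument; your route buys an explicit proof of a fact the paper leaves implicit, namely that $G_{R'}$ inherits enough dualizable representations (indeed, enough of the special form $(X_i)_{R'}$) via restriction of scalars followed by the counit surjection $Y|_R\otimes_R R'\twoheadrightarrow Y$ --- a fact that is in any case needed to apply Lemmas \ref{PhiExt} and \ref{PhiExtExact} to $\phi^{R'}$, which both you and the paper do. The remaining steps of your argument (extension of $\psi$ to $\tilde\psi$ by full faithfulness in Proposition \ref{LexUP}, compatibility with direct sums, strictness of $\tilde\phi^{R'}(\pi)$ on filtration steps, and descent of the congruence along it) are all sound.
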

\begin{proof}
  By Lemma \ref{PhiExtExact}, the extension of $\phi$ (resp. $\phi^{R'}$) to $\Rep G$ (resp. $\Rep G_{R'}$) is exact. Since every representation $X \in \Rep^\circ G$ embeds into $\omega(X) \otimes \rho_{G} \in \Rep G$ via the comodule morphism, this implies that a section of $P(\phi)$ (resp. $P(\phi^{R'})$) is in $U_{\leq \alpha}(\phi)$ (resp. $U_{\leq\alpha}(\phi^{R'})$) if and only if it acts as the identity on $\omega(\rho_{G,R'})^{\phi\leq\gamma}/\omega(\rho_{G,R'})^{\phi\leq\gamma+\alpha}$ for all $\gamma \in \Gamma$. This proves the claim.
\end{proof}

\begin{lemma} \label{UADiscr}
  For each $\alpha \in \Gamma^{\leq 0}$ there exists an element $\alpha'<\alpha$ of $\Gamma$ for which $U_{<\alpha}(\phi)=U_{\leq\alpha'}(\phi)$.
\end{lemma}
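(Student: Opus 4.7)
The plan is to reduce to the splittable case via \'etale descent, where the finiteness of weights on the Lie algebra makes the claim transparent. By Lemma \ref{UABC} applied with $R'' = R'$, we may replace $(R,G)$ by $(R',G_{R'})$ and assume $R = R'$ and $S = \Spec(R)$. Since $S$ is affine and hence quasi-compact, Theorem \ref{ELSplittable} furnishes a finite \'etale cover $\{S_i \to S\}_{i=1}^n$ with each $\phi_{S_i}$ splittable. Fix on each $S_i$ a splitting, corresponding by Lemma \ref{SplittingCochar} to a cocharacter $\chi_i \colon D^\Gamma_{S_i} \to P(\phi)_{S_i}$. By Theorem \ref{FpqcIso}, $G(\phi)$ is fpqc-locally isomorphic to $G$ and hence smooth affine of finite type over $S$, so its Lie algebra sheaf $\Fg \defeq \ULie(G(\phi))$ is locally free of finite rank; the adjoint action of $\chi_i$ then gives a $\Gamma$-grading $\Fg|_{S_i} = \bigoplus_\mu (\Fg|_{S_i})^\mu$ with only finitely many nonzero graded pieces, indexed by a finite subset $W_i \subset \Gamma$.

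The central step, which I expect to be the principal obstacle, is the explicit description of $U_{\leq\alpha'}(\phi)|_{S_i}$ in the splittable case. Using the grading of $\omega_{S_i}$ induced by $\chi_i$, a section $h \in P(\phi)(S')$ decomposes into block components $h_{\mu,\mu'} \colon \omega(X)^{\mu'} \to \omega(X)^\mu$ for each $X \in \Rep^\circ G$, and filtration preservation forces $h_{\mu,\mu'} = 0$ for $\mu > \mu'$. Translating the defining condition of $U_{\leq\alpha'}(\phi)$ into these coordinates yields $h_{\mu,\mu} = \id$ for every $\mu$ together with $h_{\mu,\mu'} = 0$ whenever $\mu' + \alpha' < \mu < \mu'$. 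Hence $U_{\leq\alpha'}(\phi)|_{S_i}$ is a smooth unipotent closed subgroup scheme of $P(\phi)|_{S_i}$ whose formation is governed by the graded Lie subalgebra $\bigoplus_{\mu \leq \alpha'} (\Fg|_{S_i})^\mu$; in particular it depends on $\alpha'$ only through the set $\{\mu \in W_i : \mu \leq \alpha'\}$.

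To conclude, set $W \defeq W_1 \cup \cdots \cup W_n$, a finite subset of $\Gamma$, and choose $\alpha' < \alpha$ with $(\alpha', \alpha) \cap W = \emptyset$: take $\alpha' \defeq \max(W \cap \Gamma^{<\alpha})$ if this set is nonempty, and any element strictly less than $\alpha$ otherwise. For every $\gamma'' \in [\alpha', \alpha)$ and every $i$, the set $\{\mu \in W_i : \mu \leq \gamma''\}$ coincides with $\{\mu \in W_i : \mu \leq \alpha'\}$, so $U_{\leq\gamma''}(\phi)|_{S_i} = U_{\leq\alpha'}(\phi)|_{S_i}$; by fpqc descent of closed subgroup schemes of $P(\phi)$, the equality $U_{\leq\gamma''}(\phi) = U_{\leq\alpha'}(\phi)$ then holds over $S$. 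Since $[\alpha', \alpha)$ is cofinal in $\Gamma^{<\alpha}$, the colimit defining $U_{<\alpha}(\phi)$ stabilizes at $U_{\leq\alpha'}(\phi)$, yielding the desired equality.
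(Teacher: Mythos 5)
There is a genuine gap at the central step. After passing to a splitting on $S_i$ and writing a section $h$ of $P(\phi)$ in block components $h_{\mu,\mu'}$, your translation of the defining condition of $U_{\leq\alpha'}(\phi)$ is correct, but the next assertion --- that $U_{\leq\alpha'}(\phi)|_{S_i}$ is ``governed by'' the graded piece $\bigoplus_{\mu\leq\alpha'}(\Fg|_{S_i})^\mu$ and hence depends on $\alpha'$ only through $\{\mu\in W_i:\mu\leq\alpha'\}$ --- is exactly the point that needs proof, and nothing in your argument supplies it. The condition $h_{\mu,\mu'}=0$ for $\mu'+\alpha'<\mu<\mu'$ is imposed for \emph{all} $X\in\Rep^\circ G$ simultaneously, and the set of weight differences $\mu-\mu'$ occurring across all dualizable representations is in general infinite, so finiteness of $W_i=$ (weights of the adjoint action on $\ULie(G(\phi))$) does not by itself bound the set of $\alpha'$ at which $U_{\leq\alpha'}$ can jump. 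Identifying the degrees at which jumps actually occur with the weights of the adjoint action on the Lie algebra is essentially the content of Theorem \ref{GrADesc} (equivalently of the cited result \cite[IV 2.1.4.1]{SaavedraRivano} in the split case), and in this paper Theorem \ref{GrADesc} is proved \emph{after} and \emph{via} Proposition \ref{UaRepr}, which itself invokes Lemma \ref{UADiscr}. So as written your argument either presupposes an unproven structural fact or is circular; the same applies to the smoothness and closedness of $U_{\leq\alpha'}$, which are only established in Proposition \ref{UaRepr}.

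The paper's own proof sidesteps all of this and needs neither a splitting nor the Lie algebra: as in Lemma \ref{UABC}, membership in $U_{\leq\alpha}(\phi)$ can be tested on the single object $\omega(\rho_G)$, and since $\CO(G)$ is a finitely generated $R$-algebra one can find one dualizable $X$ mapping onto a generating set, so that membership is tested on the finitely many filtration steps of $\omega(X)$ alone; the finitely many differences of those jumps then give the required $\alpha'$. If you want to keep your splittable-case strategy, you would at minimum need to interpose this single-representation reduction (or an independent proof that the jump set of $\alpha'\mapsto U_{\leq\alpha'}(\phi)|_{S_i}$ is finite) before the Lie-theoretic description can be used.
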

\begin{proof}
  As in the proof of Lemma \ref{UABC}, whether a section of $P(\phi)$ lies in $U_{\leq \alpha}(\phi)$ can be seen from its action on $\omega(\rho_{G,R'})$. Since $\rho_G$ is finitely generated as an $R$-algebra and can be written as a colimit of dualizable representations, there exists an object $X \in \Rep^\circ G$ together with a map $X \to \rho_G$ whose image contains a generating set of the $R$-algebra $\rho_G$. For such an $X$ it follows that a section of $P(\phi)$ is in $U_{\leq \alpha}(\phi)$ if and only if it acts as the identity on $\omega(X)^{\phi\leq\gamma}/\omega(X)^{\phi\leq\gamma+\alpha}$ for all $\gamma \in \Gamma$. Using this, the fact that the filtration $(\omega(X)^{\phi\leq\gamma})_{\gamma \in \Gamma}$ has only finitely many steps implies the claim.
\end{proof}
\begin{proposition} \label{UaRepr}
  The subgroup functors $U_{\leq\alpha}(\phi)$ and $U_{<\alpha}(\phi)$ of $P(\phi)$ are representable by closed subschemes which are smooth over $S$.
\end{proposition}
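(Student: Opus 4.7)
The plan has three steps.

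First, by Lemma \ref{UADiscr} there exists for each $\alpha \in \Gamma^{\leq 0}$ an element $\alpha'<\alpha$ of $\Gamma$ with $U_{<\alpha}(\phi)=U_{\leq\alpha'}(\phi)$, so it suffices to prove the assertion for the subgroups $U_{\leq\alpha}(\phi)$.

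Second, for representability as a closed subscheme of the affine $S$-scheme $P(\phi)$, I would adapt the reasoning of Lemmas \ref{UABC} and \ref{UADiscr}. By Lemma \ref{PhiExtExact} and the embedding of any $X \in \Rep^\circ G$ into $\omega(X)\otimes\rho_G$, lying in $U_{\leq\alpha}(\phi)$ is detected by the action on the quotients $\omega(\rho_G)^{\phi\leq\gamma}/\omega(\rho_G)^{\phi\leq\gamma+\alpha}$ for $\gamma\in\Gamma$; picking a single $X\in\Rep^\circ G$ with a morphism $X\to\rho_G$ whose image generates $\rho_G$ as an $R$-algebra (as in the proof of Lemma \ref{UADiscr}), this becomes a condition on the action of $P(\phi)$ on $\omega(X)$. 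Since $\omega(X)$ is dualizable its filtration has only finitely many distinct steps, so $U_{\leq\alpha}(\phi)$ is cut out of $P(\phi)$ by finitely many closed conditions.

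Third, for smoothness I would check \'etale-locally on $S$, using Theorem \ref{ELSplittable} to reduce to the case where $\phi$ is split by a cocharacter $\chi\colon D^\Gamma_S\to P(\phi)$. Conjugation by $\chi$ grades $\Lie G(\phi)=\bigoplus_{\gamma}\Lie G(\phi)^\gamma$ with only finitely many nonzero weights, and a short infinitesimal computation identifies $\Lie U_{\leq\alpha}(\phi)$ with $\bigoplus_{\gamma\leq\alpha}\Lie G(\phi)^\gamma$. Listing the negative weights of $\chi$ as $\gamma_1<\cdots<\gamma_m<0$, the subgroups $U_{\leq\gamma_j}(\phi)$ form a chain of $D^\Gamma_S$-stable normal subgroups of $P(\phi)$ whose successive quotients $U_{\leq\gamma_{j+1}}(\phi)/U_{\leq\gamma_j}(\phi)$ are commutative (since $[U_{\leq\gamma_{j+1}}, U_{\leq\gamma_{j+1}}]\subset U_{\leq 2\gamma_{j+1}}\subset U_{\leq\gamma_j}$) and carry a $D^\Gamma_S$-action of a single nonzero weight $\gamma_{j+1}$. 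The structure theory of commutative affine group schemes with such an action identifies each quotient with the vector group on $\Lie G(\phi)^{\gamma_{j+1}}$, so each $U_{\leq\gamma_j}(\phi)$, and more generally $U_{\leq\alpha}(\phi)$ for $\alpha\in\Gamma^{\leq 0}$, is a successive extension of vector groups over $S$ and hence smooth. For $\alpha=0$ one further uses that $P(\phi)\cong L(\phi)\ltimes U(\phi)$ in the split setting and that $L(\phi)=Z_{G(\phi)}(\chi)$ is smooth as the centralizer of a multiplicative-type subgroup in the smooth $S$-group scheme $G(\phi)$.

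The main obstacle will be Step 3: justifying in the required generality the structural claim that a commutative affine $S$-group scheme acted on by $D^\Gamma_S$ through a single nonzero weight is isomorphic to the vector group on its Lie algebra, and the smoothness of the centralizer $L(\phi)=Z_{G(\phi)}(\chi)$ of a multiplicative-type subgroup in the smooth $S$-group scheme $G(\phi)$.
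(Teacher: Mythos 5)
Your Steps 1 and 2 are sound and essentially reproduce the paper's reductions: the paper likewise invokes Lemma \ref{UADiscr} to dispose of $U_{<\alpha}(\phi)$, and your reduction of the defining condition to a single dualizable $X$ mapping onto generators of $\rho_G$ (via Lemma \ref{PhiExtExact} and the comodule embedding) is exactly the mechanism of Lemmas \ref{UABC} and \ref{UADiscr}; since the quotients $\omega(X)^{\phi\leq\gamma}/\omega(X)^{\phi\leq\gamma+\alpha}$ are locally free, "acting as the identity" is indeed a closed condition and representability by a closed subscheme follows. Where you diverge from the paper is Step 3: after reducing étale-locally to the splittable case, the paper does not argue structurally but instead uses Lemma \ref{UABC} to descend to $S=\Spec(R)$ and Theorem \ref{FpqcIso} to replace $G$ by $\UAut^\otimes(\omega)$, so that $\omega$ becomes the forgetful functor, and then cites \cite[IV 2.1.4.1]{SaavedraRivano} for everything, including smoothness.

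Your Step 3 contains a genuine gap, and it is the one you yourself flag. The structural claim that a commutative affine $S$-group scheme with a $D^\Gamma_S$-action through a single nonzero weight is the vector group on its Lie algebra is false in this generality: $\alpha_p$ over a field of characteristic $p$, with $\Gm$ acting by scaling, is a counterexample. Such statements hold only under flatness or smoothness hypotheses, which is precisely what you are trying to prove, so the argument is circular as stated. The correct identification of the graded pieces $U_{\leq\alpha}(\phi)/U_{<\alpha}(\phi)$ with $\mathbb{V}(\gr_\alpha\Lie(\UAut^\otimes(\forg\circ\gr\circ\phi)))$ is the content of the paper's Theorem \ref{GrADesc}, whose proof \emph{uses} the smoothness assertion of Proposition \ref{UaRepr} (via the fiberwise flatness criterion), so you cannot appeal to it here; what is needed instead is the explicit truncated-logarithm/exponential construction carried out in \cite[IV 2.1.4.1]{SaavedraRivano} for the forgetful fiber functor, which is why the paper performs the extra reductions to land exactly in that setting. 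By contrast, your worry about $L(\phi)=Z_{G(\phi)}(\chi)$ is unfounded: smoothness of centralizers of multiplicative-type subgroups in smooth affine group schemes over an arbitrary base is standard (\cite[Exp. XI]{SGA3II}), so the $\alpha=0$ case is not the problem.
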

\begin{proof}
  By Lemma \ref{UADiscr} it suffices to prove the claim for the functors $U_{\leq\alpha}(\phi)$. Since it suffices to prove the claim \'etale-locally on $S$, by Theorem \ref{ELSplittable} we may assume that $\phi$ is splittable and that $S=\Spec(R')$ is affine. Then using Lemma \ref{UABC} we may replace $\phi$ by $\phi^{R'}$ and hence assume that $S=\Spec(R)$.

  By Theorem \ref{FpqcIso}, the group schemes $G$ and $\UAut^\otimes(\omega)$ differ by a $G$-torsor on $S$. Using the induced isomorphism $\Rep^\circ G \cong \Rep^\circ \UAut^\otimes(\omega)$ we may replace $G$ by $\UAut^\otimes(\omega)$ and hence can assume that $\omega=\forg\colon \Rep^\circ G \to \Mod(R)$. Then the claim is given by \cite[IV 2.1.4.1]{SaavedraRivano}.
\end{proof}

\begin{construction}
 Assume that $S=\Spec(R)$. Let $\alpha \in \Gamma^{<0}$. Let $g \in U_{\leq\alpha}(\phi)(S')$ for some $S$-scheme $S'$. For $X \in \Rep^\circ G$, the endomorphism $h_X \defeq g - \id_{\omega(X)_{S'}}$ of $\omega(X)_{S'}$ maps $\omega(X)^{\phi\leq\gamma}_{S'}$ into $\omega(X)^{\phi\leq\gamma+\alpha}_{S'}$ for each $\gamma \in \Gamma$. Hence it naturally induces a homomorphism $\tilde h_X \colon \gr(\phi(X)) \to \gr(\phi(X))$ of degree $\alpha$. For $X, Y \in \Rep^\circ G$, the identity
  \begin{equation*}
    h_{X\otimes Y}= h_X \otimes \id_{\omega(Y)_{S'}} + \id_{\omega(X)_{S'}}\otimes h_{Y} + h_X \otimes h_Y
  \end{equation*}
  implies that the $\tilde h_X$ are compatible with tensor products in the sense of Definition \ref{TEnd}. Hence for varying $X$ these endomorphisms define an element $$\ell(g) \in \UEnd^\otimes(\forg\circ\gr\circ\phi)(S').$$

  By Theorem \ref{FpqcIso}, the fiber functor $\forg\circ\gr\circ\phi$ on $\Rep^\circ G$ is isomorphic to $\omega_G$ fpqc-locally on $X$. Hence $\UAut^\otimes(\forg\circ\gr\circ\phi)$ is representable by a smooth group scheme over $S'$. So using Proposition \ref{TannakaLie} we can consider $\ell(g)$ as an element
  \begin{equation*}
    \ell(g) \in \ULie(\UAut^\otimes(\forg\circ\gr\circ\phi))(S')=\Lie(\UAut^\otimes(\forg\circ\gr\circ\phi))_{S'}.
  \end{equation*}

  Using the equivariance statement in Proposition \ref{TannakaLie}, the fact that the $\tilde h_X$ are of degree $\alpha$ translates to the fact that $\ell(g)$ lies in the subspace $\gr_\alpha(\Lie(\UAut^\otimes(\forg\circ\gr\circ\phi))_{S'}$ of elements of degree $\alpha$ with respect to the grading of $\Lie(\UAut^\otimes(\forg\circ\gr\circ\phi))$ defined by $\gr\circ\phi$ via the adjoint action. The assignment $g \mapsto \ell(g)$ is functorial in $S'$ and so we obtain a morphism
  \begin{equation} \label{LogMorphism}
    \ell \colon U_{\leq \alpha}(\phi) \to \mathbb{V}(\gr_\alpha \Lie(\UAut^\otimes(\forg\circ\gr\circ\phi)))
  \end{equation}
  of schemes over $S$.

\end{construction}
\begin{lemma} \label{LogKernel}
  Let $\alpha \in \Gamma^{<0}$.
  \begin{enumerate}[(i)]
  \item The morphism \eqref{LogMorphism} is a group homomorphism with respect to the additive group structure on its target.
  \item The kernel of \eqref{LogMorphism} is equal to $U_{<\alpha}(\phi)$.
  \end{enumerate}
\end{lemma}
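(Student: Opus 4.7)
The plan is to prove the two parts separately, using a ``filtered Baker--Campbell--Hausdorff'' computation for (i) and the single-generator reduction from Lemma \ref{UADiscr} for (ii). For part (i), given $g_1, g_2 \in U_{\leq\alpha}(\phi)(S')$ and setting $h^i_X \defeq g_i - \id_{\omega(X)_{S'}}$, the identity
\begin{equation*}
  (g_1 g_2) - \id_{\omega(X)_{S'}} \;=\; h^1_X + h^2_X + h^1_X \circ h^2_X
\end{equation*}
splits the effect of multiplication into a linear part plus a cross-term. Each $h^i_X$ shifts the filtration down by $\alpha$, so the cross-term $h^1_X\circ h^2_X$ shifts it by $2\alpha$. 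Since $\alpha < 0$ we have $2\alpha < \alpha$, so $h^1_X\circ h^2_X$ maps $\omega(X)_{S'}^{\phi\leq\gamma}$ into $\omega(X)_{S'}^{\phi\leq\gamma+2\alpha} \subset \omega(X)_{S'}^{\phi<\gamma+\alpha}$ and hence induces the zero homomorphism of degree $\alpha$ on $\gr(\phi(X))_{S'}$. The identity $\ell(g_1 g_2) = \ell(g_1) + \ell(g_2)$ then follows by definition of $\ell$, functorially in $S'$ and $X$.

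For part (ii), one inclusion is immediate: by Lemma \ref{UADiscr} we may pick $\alpha' < \alpha$ with $U_{<\alpha}(\phi) = U_{\leq\alpha'}(\phi)$, and for any $g \in U_{\leq\alpha'}(\phi)(S')$ the map $h_X$ sends $\omega(X)_{S'}^{\phi\leq\gamma}$ into $\omega(X)_{S'}^{\phi\leq\gamma+\alpha'} \subset \omega(X)_{S'}^{\phi<\gamma+\alpha}$, so $\tilde h_X = 0$ and $\ell(g) = 0$. The reverse inclusion is the real content: assume $\ell(g) = 0$, so $h_X(\omega(X)_{S'}^{\phi\leq\gamma}) \subset \omega(X)_{S'}^{\phi<\gamma+\alpha}$ for every $X \in \Rep^\circ G$ and every $\gamma$, and we want $g \in U_{\leq\alpha'}(\phi)(S')$ for some $\alpha' < \alpha$ independent of $\gamma$.

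To produce such a uniform $\alpha'$, I would replay the reduction from the proof of Lemma \ref{UADiscr}: choose a single $X_0 \in \Rep^\circ G$ with a morphism $X_0 \to \rho_G$ whose image generates $\rho_G$ as an $R$-algebra, so that membership in each $U_{\leq\beta}(\phi)$ is detected purely by the action on $\omega(X_0)$. Because $\omega(X_0)$ is dualizable, its filtration has only finitely many jumps $\gamma_1 < \cdots < \gamma_n$; for each $i$ either $\omega(X_0)^{\phi<\gamma_i+\alpha} = 0$ or there is a largest jump $\gamma_{j_i} < \gamma_i + \alpha$. Taking $\alpha' \defeq \max_i(\gamma_{j_i} - \gamma_i)$ over the non-trivial indices yields $\alpha' < \alpha$ together with $h_{X_0}(\omega(X_0)_{S'}^{\phi\leq\gamma}) \subset \omega(X_0)_{S'}^{\phi\leq\gamma+\alpha'}$ for every $\gamma$, and the detection principle then forces $g \in U_{\leq\alpha'}(\phi) \subset U_{<\alpha}(\phi)$. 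The main technical point is precisely this uniformization step: without the single-generator reduction and the finiteness of jumps on dualizable objects, the pointwise vanishing of $\tilde h_X$ would only yield a $\gamma$-dependent shift and not membership in $U_{<\alpha}(\phi)$.
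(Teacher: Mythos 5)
Your proposal is correct. For part (i) your computation is the paper's computation in different notation: the identity $(g_1g_2)-\id = h^1+h^2+h^1\circ h^2$, with the cross-term landing in $\omega(X)^{\phi\leq\gamma+2\alpha}_{S'}$ and hence dying in the degree-$\alpha$ graded piece, is exactly the displayed identity in the paper's proof. For part (ii) the paper simply asserts that the claim ``follows from the definition,'' whereas you correctly isolate the one point that is not purely definitional: vanishing of $\ell(g)$ only says $h_X(\omega(X)^{\phi\leq\gamma}_{S'})\subset\omega(X)^{\phi<\gamma+\alpha}_{S'}$ for each $X$ and $\gamma$ separately, while membership in $U_{<\alpha}(\phi)=\colim_{\alpha'<\alpha}U_{\leq\alpha'}(\phi)$ requires a single $\alpha'<\alpha$ working uniformly. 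Your resolution --- detect membership on one dualizable generator $X_0$ of $\rho_G$ as in Lemma \ref{UADiscr}, then use the finiteness of the filtration jumps of $\omega(X_0)$ to extract a uniform $\alpha'$ --- is precisely the mechanism the paper already deploys in Lemma \ref{UADiscr}, so this is the intended reading rather than a new route; but making it explicit is a genuine improvement, since for a discrete group like $\Gamma=\BZ$ the uniformization is vacuous ($\alpha'=\alpha-1$ works), whereas for dense $\Gamma$ it really does require the single-generator reduction and the finiteness of jumps. One minor point of care: the finiteness of the set of jumps of $\omega(X_0)$ holds locally (or over quasi-compact, e.g.\ affine, $S$), which is the setting the paper has already reduced to.
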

\begin{proof}
  (i) For sections $g,h \in U_{\leq\alpha}(\phi)(S')$ and for an element $x \in F^{\gamma}(\omega(X)_{S'})$ we find
  \begin{equation*}
    (g(h(x))-x)-(g(x)-x)-(h(x)-x)=g(h(x)-x)-(h(x)-x) \in \omega(X)^{\phi\leq \gamma+2\alpha}_{S'}.
  \end{equation*}
  This implies (i).

  Claim (ii) follows from the definition of \eqref{LogMorphism}.  
\end{proof}

% The proof of the following fact is motivated by the proof of \cite[2.3.1]{ConradReductiveGroupSchemes}:
% \begin{proposition} \label{FibrewiseIso}
%   Consider a homomorphism $h\colon G \to H$ of smooth affine group schemes for which every geometric fiber of the induced inclusion $i\colon G/\ker(h) \to H$ is an isomorphism. Then $i$ is an isomorphism.
% \end{proposition}
% \begin{proof}
%   The quotient sheaf $G/\ker(h)$ is representable by a separated algebraic space of finite type over $S$ (e.g. since it is equal to the quotient stack $[G / \ker(h)]$). The assumption on the fibers of $i$ implies that $i$ is quasi-finite. Since $i$ is a monomorphism and $G/\ker(h)$ is separated, the morphism $i$ is separated as well. Hence by \cite[Tag 03XX]{stacks-project}, the algebraic space $G/\ker(h)$ is representable by a scheme. Since $G$ is smooth over $S$, so is $G/\ker(h)$. By the assumption on $i$, the induced map $\Lie(G/\ker(h)) \to \Lie(H)$ is a isomorphism on every geometric fiber. Hence it is an isomorphism, which shows that $i$ is \'etale. Hence, by \cite[Tag 025G]{stacks-project}, as an \'etale monomorphism $i$ is an open immersion. Then the assumption on the fibers of $i$ ensures that $i$ is an isomorphism.
% \end{proof}

\begin{theorem} \label{GrADesc}
  For each $\alpha < 0$, the quotient sheaf $U_{\leq\alpha}(\phi)/U_{<\alpha}(\phi)$ is representable by the affine group scheme underlying a vector bundle over $S$.

  More precisely, the morphism \eqref{LogMorphism} induces an isomorphism
    \begin{equation*} 
    U_{\leq \alpha}(\phi)/U_{<\alpha}(\phi) \isoto \mathbb{V}(\gr_\alpha \Lie(\UAut^\otimes(\forg\circ\gr\circ\phi))).
  \end{equation*}

\end{theorem}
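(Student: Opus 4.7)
The plan is to combine Lemma \ref{LogKernel} with an \'etale-local reduction to a split situation in which the theorem becomes a standard statement about parabolic subgroups. By Lemma \ref{LogKernel}, the morphism \eqref{LogMorphism} is an additive group homomorphism with kernel equal to $U_{<\alpha}(\phi)$, so it factors through a monomorphism of fppf sheaves
\begin{equation*}
  \bar\ell \colon U_{\leq\alpha}(\phi)/U_{<\alpha}(\phi) \into \mathbb{V}(\gr_\alpha \Lie(\UAut^\otimes(\forg\circ\gr\circ\phi))).
\end{equation*}
Showing that $\bar\ell$ is an isomorphism will automatically give the representability claim, since then the quotient inherits the structure of a vector bundle from its target. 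Both assertions are \'etale-local on $S$, so I may shrink $S$ freely.

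Next I would carry out the same sequence of reductions used in the proof of Proposition \ref{UaRepr}. First, by Theorem \ref{ELSplittable} I may assume that $\phi$ is splittable, say $\phi = \fil\circ\gamma$ for some graded fiber functor $\gamma$. Then Proposition \ref{PhiBC} together with Lemma \ref{UABC} lets me replace $R$ by $R'$ and assume $S = \Spec(R)$. Finally, Theorem \ref{FpqcIso} combined with fpqc descent lets me replace $G$ by $\UAut^\otimes(\omega)$ and take $\omega = \forg$. In this situation the splitting is encoded by a cocharacter $\chi \colon D^\Gamma_R \to G$, which endows $\Lie(G)$ and the affine coordinate ring of $G$ with a $\Gamma$-grading, and $\phi$ is obtained by filtering every $G$-representation using the corresponding weight decomposition.

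In this split, standard model the theorem reduces to a statement about the parabolic attached to $\chi$: one has $P(\phi) = P(\chi)$, and the filtration $(U_{\leq\alpha}(\phi))_{\alpha \leq 0}$ is precisely the usual weight filtration of $P(\chi)$ with respect to the adjoint $\chi$-action on $\Lie(G)$. Invoking \cite[IV.2.1.4.1]{SaavedraRivano} then identifies $U_{\leq\alpha}(\phi)/U_{<\alpha}(\phi)$ with the additive group underlying $\gr_\alpha\Lie(G)$, which in turn is canonically $\gr_\alpha\Lie(\UAut^\otimes(\forg\circ\gr\circ\phi))$ via Proposition \ref{TannakaLie}.

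The step I expect to be the main obstacle is checking that the identification supplied by Saavedra Rivano's description really coincides with our $\bar\ell$, which was constructed via the linearization $g \mapsto g - \id$. This should be a direct unwinding of definitions: modulo $U_{<\alpha}(\phi)$ the leading term of $g - \id$ acts on each $\gr^\gamma(\phi(X))$ as an element of $\End$ of degree $\alpha$, and tracing through Proposition \ref{TannakaLie} one verifies that this is exactly the Lie-algebra element of degree $\alpha$ attached to $g$ under the standard parameterization of $P(\chi)/U_{<\alpha}(\chi)$. Once this compatibility is pinned down in the local model, fpqc descent promotes the isomorphism back to the original $S$ and closes the argument.
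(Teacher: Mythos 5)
Your proposal shares the paper's skeleton --- reduce \'etale-locally to the splittable case via Theorem \ref{ELSplittable}, descend to $\Spec(R)$ via Proposition \ref{PhiBC} and Lemma \ref{UABC}, trivialize the torsor via Theorem \ref{FpqcIso}, and ultimately cite \cite[IV 2.1.4.1]{SaavedraRivano} --- but it diverges at the decisive last step, and the step you defer is exactly the one the paper's argument is built to avoid. You propose to identify $U_{\leq\alpha}(\phi)/U_{<\alpha}(\phi)$ with $\gr_\alpha\Lie(G)$ over the full base $\Spec(R)$ by matching the map $\bar\ell$ (built from $g \mapsto g - \id$) against Saavedra Rivano's parametrization of the weight filtration of $P(\chi)$; you acknowledge this compatibility check as the ``main obstacle'' but leave it as an assertion that it ``should be a direct unwinding of definitions.'' That is a genuine loose end: without it you know only that $\bar\ell$ is a monomorphism of sheaves onto \emph{some} subsheaf, and surjectivity over a general ring does not follow from the mere existence of another isomorphism with the same target. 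The paper sidesteps this entirely: having reduced to the split case over $\Spec(R)$, it observes that the claim amounts to faithful flatness of \eqref{LogMorphism}, uses the smoothness of $U_{\leq\alpha}(\phi)$ from Proposition \ref{UaRepr} together with the fiberwise criterion for flatness to reduce to geometric fibers (i.e.\ to $R$ a field, via Lemma \ref{UABC}), and there notes that the induced monomorphism $U_{\leq\alpha}(\phi)/U_{<\alpha}(\phi) \into \mathbb{V}(\gr_\alpha\Lie(\cdot))$ is a closed immersion between schemes of equal dimension --- the dimension count being all that is extracted from \cite[IV 2.1.4.1]{SaavedraRivano}. No identification of $\bar\ell$ with any explicit parametrization is ever needed. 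Your route can likely be completed, but you would have to actually carry out the compatibility computation (including checking it respects the normalizations in Proposition \ref{TannakaLie}); if you instead replace your final step with the flatness-plus-dimension argument, the proof closes without it.
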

The following proof was simplified by a suggestion of an anynomous referee.

\begin{proof}
  Since by Lemma \ref{LogKernel}, the morphism \ref{LogMorphism} has kernel $U_{<\alpha}(\phi)$, the claim is equivalent to the statement that the morphism \eqref{LogMorphism} is faithfully flat. For this we may work \'etale locally on $S$ and so by Theorem \ref{ELSplittable} we may assume that $S$ is splittable. By Proposition \ref{UaRepr}, the group scheme $U_{\leq\alpha}(\phi)$ is smooth. Hence by the fiberwise criterion for flatness, it suffices to verify the faithful flatness of \eqref{LogMorphism} on each geometric fiber over $\Spec(R)$. So using Lemma \ref{UABC} we may reduce to the case that $R$ is a field.

  Then \eqref{LogMorphism}  factors through a monomorphism
      \begin{equation*} 
    U_{\leq \alpha}(\phi)/U_{<\alpha}(\phi) \into \mathbb{V}(\gr_\alpha \Lie(\UAut^\otimes(\forg\circ\gr\circ\phi)))
  \end{equation*}
  which is representable by a closed immersion, and it suffices to show that the source and target of this closed immersion have the same dimension. This is given by  \cite[IV 2.1.4.1]{SaavedraRivano}.
\end{proof}

Now we can prove Theorem \ref{MainThm}:
\begin{proof}
  By Lemma \ref{SplPseudotorsor} and Theorem \ref{ELSplittable}, the sheaf $\USpl(\phi)$ is a torsor under $U(\phi)$. By Theorem \ref{GrADesc}, the group scheme $U(\phi)$ is an iterated extension of vector group schemes over $S$. Hence any $U(\phi)$-torsor over an affine base is trivial.
\end{proof}

%%%%%%%%%%%%%%%%%%%%%%
\bibliography{references}
\bibliographystyle{alpha}

\end{document}